\theoremstyle{definition}
\newtheorem{theorem}{Theorem}[section]
\newtheorem{corollary}[theorem]{Corollary}
\newtheorem{lemma}[theorem]{Lemma}
\newtheorem{definition}[theorem]{Definition}
\newtheorem{remark}[theorem]{Remark}
\newtheorem{remarks}[theorem]{Remarks}
\newtheorem{example}[theorem]{Example}
\DeclareMathOperator{\re}{Re}
\DeclareMathOperator{\im}{Im}
\title{An integral method for solving nonlinear eigenvalue problems}
\author{Wolf-J\"urgen Beyn \footnotemark[1] \\ 
Department of Mathematics, Bielefeld University \\
P.O. Box 100131, D-33501 Bielefeld}
\date{\today}
\begin{document}
\footnotetext[1]
{supported by CRC 701
'Spectral Structures and Topological Methods in Mathematics'.}

\maketitle

\begin{abstract}We propose a numerical method for computing all
eigenvalues (and the corresponding eigenvectors)
 of a nonlinear holomorphic eigenvalue problem that lie
within a given contour in the complex plane. The method uses complex
integrals of the resolvent operator, applied to at least $k$ column vectors,
where $k$ is the number of eigenvalues inside the contour. 
The theorem of Keldysh is employed to show that the original nonlinear 
eigenvalue problem reduces to a linear eigenvalue problem of dimension $k$.
 No initial approximations of eigenvalues and eigenvectors are needed.
The method is particularly suitable for moderately large eigenvalue problems
where $k$ is much smaller than the matrix dimension. We also give an extension
of the method to the case where $k$ is larger than the matrix dimension.
The quadrature errors caused by the trapezoid sum are discussed
for the case of analytic closed contours. Using
well known techniques it is shown that the error decays exponentially
with an exponent given by the  product of the number of quadrature
points and the minimal distance of the eigenvalues to the contour.
\end{abstract}
\section{Introduction}
We consider nonlinear eigenvalue problems of the form
\begin{equation} \label{e1.1}
T(z) v = 0, \quad v \in \mathbb{C}^m, v \neq 0, z \in \Omega,
\end{equation}
where $T:\Omega \rightarrow \mathbb{C}^{m,m}$ is assumed to be holomorphic in 
some  domain  $ \Omega \subset \mathbb{C}$.
The computation of all eigenvalues and eigenvectors inside $\Omega$ usually
requires to solve two problems (see \cite{mv04},\cite{bhms08} for recent
reviews) :
\begin{enumerate}
\item Approximate localization and separation of eigenvalues in suitable
domains resp. intervals,
\item accurate computation of eigenvalues and associated eigenvectors
by an iterative method.
\end{enumerate}
The global problem of localization can be substantially simplified if
minimum-maximum characterizations similar to the linear case hold
\cite{vw82},\cite{vo03}. Voss and co-workers have combined these principles
with locally convergent methods  
of Arnoldi or Jacobi-Davidson type (see \cite{v04},\cite{bv04},\cite{v07}),
and in this way provided an effective means for computing all eigenvalues.

Another case where both problems can be solved, 
is for polynomials 
\begin{equation*} \label{e1.3}
T(z) = \sum_{j=0}^{p} T_j (z-z_0)^j, 
\quad T_j \in \mathbb{C}^{m,m}.
\end{equation*}
This eigenvalue problem can be reduced to a linear eigenvalue
problem of dimension $pm$, and this is the path taken by the MATLAB
routine {\it polyeig}. Quite a few papers in the literature either
analyze this linearization approach or generalize methods from
linear eigenvalues to the polynomial case.

In the general holomorphic case we just have  a power series near each 
$z_0 \in \Omega$ 
\begin{equation*} \label{e1.2}
T(z) = \sum_{j=0}^{\infty} T_j (z-z_0)^j, \quad |z-z_0| \;\text{small},
\quad T_j \in \mathbb{C}^{m,m}.
\end{equation*}
One may then use polynomial truncation and the polynomial solver for
getting good initial estimates of the eigenvalues. However, the success
of this method strongly depends on the radius of convergence and
on the decay of the coefficient matrices. Also, it may be necessary
to compute power series at many different points in $\Omega$.

Finally, we refer to the recent approach of Kressner \cite{kr09}, who
uses the fact that any holomorphic matrix function can be written as
\begin{equation*} \label{krep}
T(z)= \sum_{j=1}^{p} f_j(z) T_j, \quad T_j \in \mathbb{C}^{m,m}
\end{equation*}
with holomorphic functions $f_j:\Omega \mapsto \mathbb{C}$ 
(such a representation always exists for some $p \le m^2$).
Then a Newton-type iteration is devised in \cite{kr09} that allows to
compute a group of eigenvalues and an associated subspace. Though
the convergence of this method is surprisingly robust to the
choice of initial values, it remains a method for solving
the local problem. 

In this paper we tackle the global problem by using contour integrals,
which seem to be the only available tool in the general holomorphic
case. The idea is to use the  theorem of Keldysh \cite{Ke51},\cite{Ke71}, 
which provides 
an expansion of $T(z)^{-1}$ in a neighborhood $\mathcal{U}\subset \Omega$ 
of an eigenvalue  $\lambda \in \Omega$ as follows:
\begin{equation} \label{e1.4}
T(z)^{-1} = \sum_{j=-\kappa}^{\infty} S_j (z-\lambda)^j, 
\quad z \in \mathcal{U}\setminus \{\lambda\}, \quad S_j \in \mathbb{C}^{m,m},
\quad S_{-\kappa}\neq 0.
\end{equation}
More specifically, Keldysh' theorem gives a representation of the singular
part in \eqref{e1.4} in terms of generalized eigenvectors of $T(z)$ and
its adjoint $T^H(z)$. A good reference for the underlying theory is
\cite{MM03} which we briefly review in Section \ref{sec2}. 

Numerical methods based on contour integrals seem not to have 
attracted much attention in the past. A notable exception 
are exponential integrators
and, more recently, approaches to compute analytic functions of matrices
via suitably transformed contour integrals, see
\cite{HHT08},\cite[13.3.2]{h08}. In  particular,the exponential 
convergence of the trapezoid sum is proved in \cite{HHT08}.

Our goal is to compute all eigenvalues and the associated eigenvectors
that lie within a given closed contour $\Gamma$  in $\Omega$. The main
algorithm is described in Section \ref{sec3}.
Suppose that  $k \le m$ eigenvalues of \eqref{e1.1} lie inside $\Gamma$.
Then our method reduces the nonlinear eigenvalue problem to a linear
one of dimension $k$ by evaluating the contour integrals
\begin{equation} \label{e1.5}
A_p = \frac{1}{2 \pi i} \int_{\Gamma} z^p T(z)^{-1} \hat{V} dz, \quad p=0,1.
\end{equation}
Here $\hat{V} \in \mathbb{C}^{m,k}$ is generally taken as a random matrix.
The contour integrals in \eqref{e1.5} are calculated approximately
 by the trapezoid sum. If $N$ quadrature points are used, this requires
to solve $Nk$ linear systems, which is the main numerical effort. 
As a consequence, our method is limited to moderately large nonlinear
eigenvalue problems for which a fast (sparse) direct solver is available.

In  Section \ref{sec4} we apply the algorithm to several examples,
showing that a moderate number of quadrature nodes ($N \approx 25$)
is usually sufficient to get good estimates of eigenvalues and eigenvectors.
Based on \cite{DR84}, we  prove in Section \ref{sec4} that the quadrature 
error decays  exponentially with an exponent that depends on the product of
the number of quadrature nodes and the smallest distance of the eigenvalues
to the contour. 

In the final Section \ref{sec5} we deal with two problems that are typical
for nonlinear eigenvalue problems and that do not occur in the linear case:
First, there can be much more eigenvalues than the
matrix dimension  (e.g. characteristic functions for 
delay equations) and, second, 
eigenvectors belonging to different eigenvalues can be linearly
dependent, even if the number of eigenvalues is less than the matrix
dimension.
In Section \ref{sec5} we extend our integral method such 
that it applies to the case $k >m$ and that it can also handle rank defects
of eigenspaces.
For the extended integral method it is necessary to evaluate $A_p$ from 
\eqref{e1.5}  for indices  $0 \le p \le 2\lceil \frac{k}{m} \rceil -1$.
Numerical examples show that this extension is suitable for solving both
aforementioned problems.
\smallskip

\noindent
{\bf Acknowledgement:} The author thanks Ingwar Petersen for the support
with the numerical experiments.

\section{Nonlinear eigenvalues and Keldysh' Theorem}
\label{sec2}
The material in this section is largely based on the monograph
\cite{MM03}. It contains a general study of meromorphic operator
functions that have values in spaces of Fredholm operators of index $0$.
For our purposes it is sufficient to consider  matrix valued mappings
\begin{equation*}\label{e2.1}	
		T: \Omega \subset \mathbb{C} \to \mathbb{C}^{m,m},
\end{equation*}
that are holomorphic in some open domain $\Omega$. 
We write this as $T\in H(\Omega,\mathbb{C}^{m,m})$.
For a matrix $A$ we denote by $R(A)$ and $N(A)$ its range and 
nullspace, respectively.
\begin{definition} \label{def1}
A number $\lambda \in \Omega$ is called an {\it eigenvalue} of $T(\cdot)$ if
$ T(\lambda)v =0 $ for some $v \in \mathbb{C}^m, v \neq 0$. The vector $v$
is then called a {\it (right) eigenvector}. By $\sigma(T)$ we denote the set of 
all eigenvalues and by $\rho(T)=\Omega \setminus \sigma(T)$ we denote
the {\it resolvent set}.\\
The eigenvalue $\lambda$ is called {\it simple} if
\[
N(T(\lambda))= \mathrm{span}\{v\}, v\neq 0
 \quad T'(\lambda)v \notin R(T(\lambda)).
\]
\end{definition}
\begin{theorem} \label{theinv}
Every eigenvalue $\lambda\in \sigma(T)$  of $T \in H(\Omega,\mathbb{C}^{m,m})$
is isolated, i.e. $\mathcal{U}\setminus\{\lambda\}\subset \rho(T)$ for some 
neigborhood $\mathcal{U}$ of $\lambda $.  \\
Moreover, $T(z)$ is meromorphic at $\lambda$, i.e. there exist
$\kappa \in \mathbb{N}$ and $S_j \in \mathbb{C}^{m,m}$ for $j \ge - \kappa$
 such that
$S_{-\kappa} \neq 0$ and 
\begin{equation} \label{e2.2}
T(z)^{-1} = \sum_{j=-\kappa}^{\infty} S_j (z-\lambda)^{j}, \quad
z \in \mathcal{U}\setminus \{\lambda\}.
\end{equation}
\end{theorem}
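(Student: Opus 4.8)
The plan is to reduce everything to the scalar holomorphic function $d(z):=\det T(z)$ together with the adjugate matrix $\operatorname{adj}T(z)$, exploiting Cramer's identity $T(z)^{-1}=d(z)^{-1}\operatorname{adj}T(z)$. First I would record that $d\in H(\Omega,\mathbb{C})$ and that $z\in\sigma(T)$ exactly when $d(z)=0$, while $z\in\rho(T)$ exactly when $d(z)\neq 0$. At this point one needs the standing regularity hypothesis that $d$ does not vanish identically on the connected component $\Omega_0\subset\Omega$ containing $\lambda$ — equivalently $\rho(T)\cap\Omega_0\neq\emptyset$ — since otherwise every point of $\Omega_0$ would be an eigenvalue and nothing could be isolated. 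Granting this, the identity theorem for holomorphic functions forces the zero set of $d$ in $\Omega_0$ to be discrete; in particular $\lambda$ is isolated, so there is a neighbourhood $\mathcal{U}$ of $\lambda$ with $d(z)\neq 0$, hence $T(z)$ invertible, for all $z\in\mathcal{U}\setminus\{\lambda\}$. This settles the first assertion.

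For the Laurent expansion I would let $\kappa'\geq 1$ be the (finite, by regularity) order of the zero of $d$ at $\lambda$ and write $d(z)=(z-\lambda)^{\kappa'}g(z)$ with $g$ holomorphic near $\lambda$ and $g(\lambda)\neq 0$. Since the entries of $\operatorname{adj}T(z)$ are $(m-1)\times(m-1)$ minors of $T(z)$, they are holomorphic on all of $\Omega$, so $H(z):=g(z)^{-1}\operatorname{adj}T(z)$ is holomorphic in a neighbourhood of $\lambda$ and has a Taylor expansion $H(z)=\sum_{n\geq 0}C_n(z-\lambda)^n$. Cramer's rule then gives
\[
T(z)^{-1}=(z-\lambda)^{-\kappa'}H(z)=\sum_{n\geq 0}C_n(z-\lambda)^{n-\kappa'},\qquad z\in\mathcal{U}\setminus\{\lambda\},
\]
a Laurent series with only finitely many negative terms, which is \eqref{e2.2} after reindexing $S_j:=C_{j+\kappa'}$.

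It remains to pin down the genuine pole order. I would define $\kappa$ as the negative of the smallest index $j$ whose Laurent coefficient is nonzero; then $\kappa\leq\kappa'<\infty$, so $\kappa\in\mathbb{N}$, and $S_{-\kappa}\neq 0$ holds by construction. The one point that still needs an argument is $\kappa\geq 1$, i.e.\ that the singularity is a true pole and not removable: if all negative coefficients vanished, $T(z)^{-1}$ would extend holomorphically across $\lambda$ with value $S_0$, and letting $z\to\lambda$ in $T(z)T(z)^{-1}=I$ would yield $T(\lambda)S_0=I$, so $T(\lambda)$ would be invertible, contradicting $\lambda\in\sigma(T)$.

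The only subtle point — and the thing not to get wrong — is that the zero order $\kappa'$ of $\det T$ at $\lambda$ need not equal the pole order $\kappa$ of $T^{-1}$: when $\operatorname{rank}T(\lambda)\leq m-2$ one has $\operatorname{adj}T(\lambda)=0$, hence $C_0=0$ and $\kappa<\kappa'$. For the present statement this is harmless, since only the \emph{existence} of some finite $\kappa$ with $S_{-\kappa}\neq 0$ is claimed, so the elementary determinant argument suffices. If one wanted to identify $\kappa$ precisely, the natural tool is the local Smith form $T(z)=E(z)\,\mathrm{diag}\bigl((z-\lambda)^{\kappa_1},\dots,(z-\lambda)^{\kappa_m}\bigr)\,F(z)$ with $E,F$ holomorphic and invertible near $\lambda$, from which $T(z)^{-1}=F(z)^{-1}\mathrm{diag}\bigl((z-\lambda)^{-\kappa_1},\dots,(z-\lambda)^{-\kappa_m}\bigr)E(z)^{-1}$ makes $\kappa=\max_i\kappa_i$ transparent; I would mention this only as the bridge to the generalized eigenvectors used later in Keldysh' theorem, not as part of the proof of Theorem~\ref{theinv}.
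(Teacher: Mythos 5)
Your argument is correct and complete, but it is not the paper's route: the paper states Theorem~\ref{theinv} without proof, importing it from the monograph \cite{MM03}, where it is established in the more general setting of finitely meromorphic, Fredholm-operator-valued functions (essentially via a local Smith/factorization form, the same device you mention at the end as the bridge to Keldysh' theorem). Your self-contained determinant--adjugate proof is the natural elementary substitute in the finite-dimensional case: the identity theorem applied to $d=\det T$ gives isolation, Cramer's rule gives a Laurent expansion with at most $\kappa'=\mathrm{ord}_\lambda(d)$ negative terms, and the limit argument $T(\lambda)S_0=I$ rules out a removable singularity, so $1\le\kappa\le\kappa'$ and $S_{-\kappa}\neq 0$. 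Two things you do that deserve credit: you make explicit the hypothesis $\det T\not\equiv 0$ (equivalently $\rho(T)\neq\emptyset$ on the component containing $\lambda$), which the paper only states later as an assumption of Theorem~\ref{thekeldysh} although it is already needed here; and you correctly refuse to identify $\kappa$ with $\kappa'$, noting that $\operatorname{adj}T(\lambda)=0$ when $\operatorname{rank}T(\lambda)\le m-2$ forces $\kappa<\kappa'$. What the citation-based route buys the paper is the precise description of the singular part in terms of generalized eigenvectors (needed for \eqref{Tinvfull}), which your determinant argument does not deliver; what your route buys is a short, fully elementary proof of exactly the assertions in \eqref{e2.2} without any machinery beyond one complex variable and linear algebra.
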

\begin{remark}
 The number $\kappa$ is uniquely determined and called the order
of the pole at $\lambda$.\\
The Theorem of Keldysh (see Theorem \ref{thekeldysh} below) gives a 
representation of the singular part
\begin{equation*}
\sum_{j=-\kappa}^{-1} S_j (z-\lambda)^j
\end{equation*}
in terms of (generalized) eigenvectors of $T$ and $T^H$. 
It goes back to
Keldysh \cite{Ke51} with a proof given in \cite{Ke71}.
Generalizations of Keldysh' theorem were derived by Trofimov \cite{Tr68},
who introduced the concept of root polynomials, and by Marcus and  Sigal
\cite{MS70} and Gohberg and Sigal \cite{GS71} who used factorizations of
operator functions. A simple direct proof was found by Mennicken and
M\"oller \cite{MM84} who later gave a concise approach to the whole
theory in \cite{MM03}.
\end{remark}

For the motivation of the algorithm in the next section it is instructive
to first state Keldysh' theorem for simple eigenvalues.
In this case Definition \ref{def1} implies for the  adjoint $T^H(z)$ 
\begin{equation*} \label{e2.3}
 N(T^H(\lambda))= \mathrm{span}\{w \} \quad \text{for some} \quad
w\in \mathbb{C}^{m}, w\neq 0,
\end{equation*}
\begin{equation*} \label{e2.4}
w^H T'(\lambda) v \neq 0.
\end{equation*}
Without loss of generality we can normalize $v$ and $w$ such that
\begin{equation} \label{e2.5}
w^H T'(\lambda) v =1.
\end{equation}
Then we are still free to further normalize either $|w|=1$ or
$|v|=1$. 

\begin{theorem} \label{Keldysh1}
	Assume $ \lambda \in \Omega $ is a simple eigenvalue of 
$T\in H(\Omega,\mathbb{C}^{m,m})$ with eigenvectors normalized as in 
(\ref{e2.5}).
Then there is a neighborhood $\mathcal{U} \subset \Omega$ of $\lambda$
and a holomorphic function $R \in H(\mathcal{U},\mathbb{C}^{m,m})$ such that
	\begin{equation} \label{expand1}
		T(z)^{-1} = \frac{1}{z-\lambda} v w^H+R(z), \quad 
z \in \mathcal{U}\setminus \{\lambda\}.
	\end{equation}
Moreover, let $\mathcal{C} \subset \Omega$ be a compact subset that contains
only simple eigenvalues $\lambda_n,n=1,\ldots,k$ with eigenvectors
 $v_n,w_n$ satisfying 
	\begin{equation} \label{normj}
	T(\lambda_n)v_n =0, \quad	w_n^HT(\lambda_n)=0, \quad 
 w_n^H T'(\lambda_n)v_n=1.
	\end{equation}
Then there is a neighborhood $ \mathcal{U}$ of $\mathcal{C}$ in $\Omega$ and a holomorphic
function $R \in H(\mathcal{U},\mathbb{C}^{m,m})$ such that
\begin{equation} \label{Tinv}
T(z)^{-1} = \sum_{n=1}^{k} \frac{1}{z - \lambda_n}v_n w_n^H + R(z), 
\quad z \in \mathcal{U}\setminus \{\lambda_1,\ldots,\lambda_k\}.
\end{equation}
\end{theorem}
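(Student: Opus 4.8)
The plan is to reduce the general (multi-eigenvalue) statement to the single-eigenvalue case and then prove the single-eigenvalue case by a direct local computation. First I would establish the local expansion \eqref{expand1} near one simple eigenvalue $\lambda$. By Theorem \ref{theinv} we already know $T(z)^{-1}$ is meromorphic at $\lambda$, so it has a Laurent expansion $T(z)^{-1} = \sum_{j \ge -\kappa} S_j (z-\lambda)^j$. The key step is to show that simplicity forces $\kappa = 1$ and identifies the residue $S_{-1}$ as $v w^H$. To see $\kappa \le 1$ and compute $S_{-1}$, I would multiply the identity $T(z) T(z)^{-1} = I$ by $(z-\lambda)^{\kappa}$ and expand $T(z) = T(\lambda) + (z-\lambda) T'(\lambda) + O((z-\lambda)^2)$; matching the lowest-order coefficient gives $T(\lambda) S_{-\kappa} = 0$, so every column of $S_{-\kappa}$ lies in $N(T(\lambda)) = \mathrm{span}\{v\}$, i.e. $S_{-\kappa} = v u^H$ for some vector $u$. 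A symmetric argument using $T(z)^{-1} T(z) = I$ and $N(T^H(\lambda)) = \mathrm{span}\{w\}$ shows the rows of $S_{-\kappa}$ are multiples of $w^H$, so $S_{-\kappa} = c\, v w^H$ for a scalar $c$. Then I would look at the next coefficient: from $T(z) T(z)^{-1} = I$, the coefficient of $(z-\lambda)^{-\kappa+1}$ gives $T(\lambda) S_{-\kappa+1} + T'(\lambda) S_{-\kappa} = 0$ if $\kappa \ge 2$ (the right side is $0$ since $\kappa \ge 2$ means this is still a singular order), and multiplying on the left by $w^H$ kills the first term and leaves $c\, w^H T'(\lambda) v\, w^H = 0$; since $w^H T'(\lambda) v = 1$ by \eqref{e2.5}, this forces $c = 0$, contradicting $S_{-\kappa} \ne 0$. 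Hence $\kappa = 1$. With $\kappa = 1$, the coefficient of $(z-\lambda)^0$ in $T(z) T(z)^{-1} = I$ reads $T(\lambda) S_0 + T'(\lambda) S_{-1} = I$; left-multiplying by $w^H$ gives $w^H T'(\lambda) S_{-1} = w^H$, and writing $S_{-1} = c\, v w^H$ yields $c\, w^H = w^H$, so $c = 1$ and $S_{-1} = v w^H$. Setting $R(z) := \sum_{j \ge 0} S_j (z-\lambda)^j$, which converges and is holomorphic on a neighborhood $\mathcal{U}$ of $\lambda$, proves \eqref{expand1}.

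For the second, global part, I would use a standard patching argument. Since the eigenvalues $\lambda_1,\dots,\lambda_k$ in the compact set $\mathcal{C}$ are isolated (Theorem \ref{theinv}), choose pairwise disjoint neighborhoods $\mathcal{U}_n \ni \lambda_n$, on each of which the first part gives $T(z)^{-1} = \frac{1}{z-\lambda_n} v_n w_n^H + R_n(z)$ with $R_n$ holomorphic on $\mathcal{U}_n$. Now define the candidate remainder globally by
\[
R(z) := T(z)^{-1} - \sum_{n=1}^{k} \frac{1}{z-\lambda_n} v_n w_n^H
\]
on $\mathcal{U} \setminus \{\lambda_1,\dots,\lambda_k\}$, where $\mathcal{U}$ is a neighborhood of $\mathcal{C}$ small enough to contain no eigenvalues other than the $\lambda_n$ (possible because $\mathcal{C}$ is compact and $\sigma(T)$ is a discrete subset of $\Omega$). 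On $\mathcal{U} \setminus \bigcup_n \mathcal{U}_n$ the function $T(z)^{-1}$ is holomorphic (no eigenvalues there) and each subtracted term is holomorphic, so $R$ is holomorphic there. Near each $\lambda_n$, inside $\mathcal{U}_n$, we have $R(z) = R_n(z) - \sum_{p \ne n} \frac{1}{z-\lambda_p} v_p w_p^H$, which is holomorphic since the pole terms $p \ne n$ are regular at $\lambda_n$. Thus $R$ extends holomorphically across each $\lambda_n$ by the Riemann removable-singularity theorem (applied entrywise), giving $R \in H(\mathcal{U}, \mathbb{C}^{m,m})$ and establishing \eqref{Tinv}.

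The main obstacle is the first part — pinning down that a simple eigenvalue gives a simple pole with residue exactly $v w^H$ rather than a higher-order pole. This is where the normalization \eqref{e2.5} does the real work, and one has to be careful with the bookkeeping of Laurent coefficients when ruling out $\kappa \ge 2$ (and, strictly, one should also check $\kappa \ge 1$, i.e. that $\lambda$ is genuinely a pole and not a removable point: if $T(z)^{-1}$ were holomorphic at $\lambda$, then $T(\lambda)$ would be invertible, contradicting $T(\lambda) v = 0$). The global patching step, by contrast, is routine once isolation of eigenvalues and the compactness of $\mathcal{C}$ are invoked; the only mild care needed is choosing $\mathcal{U}$ to exclude stray eigenvalues of $T$ outside $\{\lambda_1,\dots,\lambda_k\}$.
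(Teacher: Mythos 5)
Your proof is correct, but for the first part you take a genuinely different route from the paper. The paper disposes of \eqref{expand1} in one line by citing it as the special case $L=1$, $m_1=1$ of the full Keldysh theorem (Theorem \ref{thekeldysh}), whose remark explicitly notes that \eqref{Tinvfull} and \eqref{e2.9} collapse to \eqref{expand1} and \eqref{e2.5} in the simple case. You instead give a self-contained direct argument: starting from the meromorphy guaranteed by Theorem \ref{theinv}, you match Laurent coefficients in $T(z)T(z)^{-1}=I$ and $T(z)^{-1}T(z)=I$ to pin down $S_{-\kappa}=c\,vw^H$, use the normalization $w^HT'(\lambda)v=1$ to rule out $\kappa\ge 2$ and to force $c=1$, and you correctly note the need to exclude $\kappa=0$ (a removable singularity would make $T(\lambda)$ invertible). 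This computation is sound --- the identities $T(\lambda)S_{-\kappa}=0$, $S_{-\kappa}T(\lambda)=0$, $T(\lambda)S_{-\kappa+1}+T'(\lambda)S_{-\kappa}=0$ and $T(\lambda)S_0+T'(\lambda)S_{-1}=I$ are exactly the right coefficient equations, and the one-dimensionality of $N(T(\lambda))$ and $N(T^H(\lambda))$ together with \eqref{e2.5} is all that is used. What your approach buys is elementarity and independence from the machinery of root functions and canonical systems of generalized eigenvectors; what it loses is that it does not extend to multiple or defective eigenvalues, which is precisely why the paper routes everything through Theorem \ref{thekeldysh}. Your second part --- choosing $\mathcal{U}$ with $\sigma(T)\cap\mathcal{U}=\{\lambda_1,\ldots,\lambda_k\}$ by isolation and compactness, subtracting the singular parts, and removing the singularities at each $\lambda_n$ --- is essentially identical to the paper's argument.
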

\begin{proof}
The first part is a special case of Theorem \ref{thekeldysh} below.
For the second part, note that eigenvalues are isolated and hence we can
 choose a neighborhood
$\mathcal{C} \subset \mathcal{U} \subset \Omega$ such that 
$\sigma(T)\cap \mathcal{U} = \{ \lambda_1,\ldots,\lambda_k\}$.
Then the function
\begin{equation*}
R(z)= T(z)^{-1} - \sum_{n=1}^{k} \frac{1}{z - \lambda_n} v_n w_n^H
\end{equation*}
is holomorphic in $\mathcal{U} \cap \rho(T)$ and by the first part
it is also holomorphic in suitable neighborhoods of $\lambda_n,n=1,\ldots,k$.
\end{proof}

\begin{definition} \label{def2}
Let $T \in H(\Omega,\mathbb{C}^{m,m})$ and $\lambda \in \Omega$.
\begin{itemize}
\item[(i)] A function $v \in H(\Omega,\mathbb{C}^{m})$ is called a
{\it root function of $T$ at $\lambda$} if
\begin{equation*} \label{e2.7}
v(\lambda) \neq 0, \quad T(\lambda)v(\lambda) = 0.
\end{equation*}
The order of the zero $z=\lambda$ of $T(z)v(z)$ is called the { \it
multiplicity of $v$ at $\lambda$} and denoted by $s(v)$.
\item[(ii)] A tuple $(v_0,\ldots,v_{n-1})\in {\left(\mathbb{C}^m\right)}^n, n\ge 1$ is called
a {\it chain of generalized eigenvectors (CGE) of $T$ at $\lambda$}
if $v(z)=\sum_{j=0}^{n-1} (z-\lambda)^j v_j$ is a root function of $T$ at 
$\lambda$ of multiplicity $s(v) \ge n$.
\item[(iii)] For a given $v_0\in N(T(\lambda)), v_0 \neq 0$ the number
\begin{equation*}
r(v_0)= \max \{s(v): v \; \text{is a root function of} \;T\;
 \text{at}\; \lambda \;\text{with} \; v(\lambda)=v_0 \}
\end{equation*}
is finite and called the {\it rank of $v_0$}.
\item[(iv)] A system of vectors in $\mathbb{C}^m$
\begin{equation*}
V=\left(v_j^{\ell}, 0\le j \le m_{\ell}-1, 1 \le \ell \le L \right)
\end{equation*}
is called a {\it canonical system of generalized eigenvectors (CSGE) of $T$ at
$\lambda$}  if the following conditions hold:
\begin{enumerate}
\item[(a)] The vectors $v_0^1,\ldots,v_0^L$ form a basis of $N(T(\lambda))$,
\item[(b)] The tuple $(v_0^{\ell},\ldots,v_{m_{\ell}-1}^{\ell})$ is a CGE
of $T$ at $\lambda$ for $\ell=1,\ldots,L$,
\item[(c)] $m_{\ell}=\max \{ r(v_0): v_0\in N(T(\lambda))
\setminus \mathrm{span}\{ v_0^{\nu}:0\le \nu < \ell \} \}$
\\ for $\ell=1,\ldots,L$.
\end{enumerate}
\end{itemize}
\end{definition}

One can show that a CSGE always exists and that the numbers $m_{\ell}$
are ordered according to
\begin{equation*}
m_1 \ge m_2 \ge \ldots \ge m_L.
\end{equation*}
They are called the {\it partial multiplicities of $T$ at $\lambda$}.
With these notions we can state the following general theorem,
see \cite[Theorem 1.6.5]{MM03}.
\begin{theorem}[Keldysh] \label{thekeldysh}
Let $T\in H(\Omega,\mathbb{C}^{m,m})$ be given with $\rho(T) \neq \emptyset$.
For $\lambda \in \sigma(T)$ let
\begin{equation*}
V=\left(v_j^{\ell}, 0\le j \le m_{\ell}-1, 1 \le \ell \le L  \right)
\end{equation*}
be a CSGE of $T$ at $\lambda$.
Then there exists a CSGE
\begin{equation*}
W=\left(w_j^{\ell}, 0\le j \le m_{\ell}-1, 1 \le \ell \le L  \right)
\end{equation*}
of $T^H$ at $\lambda$, a neighborhood $\mathcal{U}$ of $\lambda$
and a function $R\in H(\mathcal{U},\mathbb{C}^{m,m})$ such that
\begin{equation} \label{Tinvfull}
T(z)^{-1} = \sum_{\ell=1}^{L}\sum_{j=1}^{m_{\ell}} (z- \lambda)^{-j}
\sum_{\nu=0}^{m_{\ell}-j} v_{\nu}^{\ell}w_{m_{\ell}-j-\nu}^{\ell H}
+ R(z), \quad z\in \mathcal{U}\setminus \{\lambda\}.
\end{equation}
The system $W$, for which \eqref{Tinvfull} holds, is the unique CSGE
of $T^H$ at $\lambda$ that satisfies the following conditions
\begin{equation*} \label{e2.8}
r(w_0^{\ell}) = m_{\ell}
\end{equation*}
\begin{equation} \label{e2.9}
\sum_{\alpha=0}^{j} \sum_{\beta=1}^{m_{\nu}} w_{j-\alpha}^{\ell H}
T_{\alpha+\beta}\, v_{m_{\nu}-\beta}^{\nu} = \delta_{\nu \ell}
\delta_{0 j},  0\le j \le m_{\ell}-1, 1\le \ell,\nu \le L,
\end{equation}
where
\begin{equation} 
T_j = \frac{1}{j !} T^{(j)}(\lambda), \quad j\ge 0.
\end{equation}
\end{theorem}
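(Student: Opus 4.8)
The plan is to reduce everything to the \emph{local Smith form} of $T$ at $\lambda$ and to read the expansion \eqref{Tinvfull} off from it. Since $\rho(T)\neq\emptyset$, the scalar function $\det T(z)$ does not vanish identically near $\lambda$, so there are a neighbourhood $\mathcal{U}$ of $\lambda$ and matrix functions $E,F\in H(\mathcal{U},\mathbb{C}^{m,m})$ with $E(z),F(z)$ invertible for every $z\in\mathcal{U}$ such that
\[
T(z)=E(z)\,D(z)\,F(z),\qquad D(z)=\mathrm{diag}\big((z-\lambda)^{\kappa_1},\dots,(z-\lambda)^{\kappa_m}\big),
\]
with integers $0\le\kappa_1\le\cdots\le\kappa_m$; the positive exponents, read in decreasing order, turn out to be exactly the partial multiplicities $m_1\ge\cdots\ge m_L$. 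Writing the columns of $F(z)^{-1}$ as $\phi_i(z)$ and the rows of $E(z)^{-1}$ as $\eta_i(z)$, inversion yields $T(z)^{-1}=\sum_{i=1}^{m}(z-\lambda)^{-\kappa_i}\phi_i(z)\eta_i(z)$, and only the $L$ indices with $\kappa_i>0$ produce a singular part.

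First I would prove the expansion for the particular eigenvector systems supplied by this factorisation. Label the positive exponents by $\ell=1,\dots,L$ in decreasing order and let $\phi_\ell,\eta_\ell$ be the associated column and row. From $T(z)\phi_\ell(z)=(z-\lambda)^{m_\ell}\psi_\ell(z)$ with $\psi_\ell(\lambda)\neq0$ (the $\ell$-th column of $E(\lambda)$) one sees that the first $m_\ell$ Taylor coefficients $\tilde v^\ell_0,\dots,\tilde v^\ell_{m_\ell-1}$ of $\phi_\ell$ form a chain of generalized eigenvectors of $T$ at $\lambda$. The $\phi_\ell(\lambda)$ are linearly independent and span $N(T(\lambda))$, and from $T=E D F$ one obtains the rank formula $r\big(\sum_{i\in S}c_i\phi_i(\lambda)\big)=\min_{i\in S}\kappa_i$ for any $v_0=\sum_{i\in S}c_i\phi_i(\lambda)\in N(T(\lambda))$ with $c_i\neq0$, which shows that the lengths $m_\ell$ realise the maxima required in Definition~\ref{def2}(c); hence $\tilde V=(\tilde v^\ell_j)$ is a CSGE of $T$ at $\lambda$, and the rows of $E(z)^{-1}$ furnish, after conjugate transposition coefficient by coefficient, a CSGE $\tilde W$ of $T^H$ at $\lambda$. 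Substituting the Taylor series of $\phi_\ell$ and $\eta_\ell$ into $T(z)^{-1}=\sum_{\ell}(z-\lambda)^{-m_\ell}\phi_\ell(z)\eta_\ell(z)+(\text{holomorphic})$ and regrouping by powers of $z-\lambda$ gives \eqref{Tinvfull} for the pair $(\tilde V,\tilde W)$; and expanding the scalar identities $\eta_\ell(z)T(z)\phi_\nu(z)=(z-\lambda)^{m_\nu}\delta_{\ell\nu}$ (equivalently $E^{-1}E=I$ together with $T=E D F$), matching coefficients, and using that $\tilde w^\ell_0$ and $\tilde v^\nu_0$ are left, respectively right, null vectors of $T(\lambda)$ to discard the superfluous terms, yields the biorthogonality relations \eqref{e2.9} with $T_j=\tfrac1{j!}T^{(j)}(\lambda)$.

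It remains to transfer the result to an \emph{arbitrary} prescribed CSGE $V$ and to prove uniqueness of $W$. Here I would use the structural lemma that any two CSGEs of $T$ at $\lambda$ differ by an invertible transformation $V=\tilde V\,G$, where $G$ is block triangular with respect to the chain grading and the length ordering (it mixes chains of equal length, appends later chain members, and adds tails of longer chains to shorter ones), and that every such $G$ produces a CSGE. Setting $W:=\tilde W\,G^{-H}$, one checks that $W$ is again a CSGE of $T^H$ at $\lambda$ — the triangular structure of $G$ being exactly what preserves condition~\ref{def2}(c) — and that the dual transformation compensates the change $V=\tilde V G$, so that the singular part of $\sum_{\ell}(z-\lambda)^{-m_\ell}A_\ell(z)B_\ell(z)$, with $A_\ell,B_\ell$ the chain polynomials built from $V$ and from $W$, still equals the singular part of $T(z)^{-1}$; regrouped by powers of $z-\lambda$, this is \eqref{Tinvfull} for $(V,W)$. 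Uniqueness follows because, with $V$ fixed, the conditions $r(w^\ell_0)=m_\ell$ together with the full system \eqref{e2.9} form a triangular linear system for the coefficients of $W$ whose diagonal blocks are invertible (once more by the nondegeneracy supplied by the Smith factorisation), so $W$ is uniquely determined.

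I expect the main obstacle to lie not in any analytic estimate but in the bookkeeping of the last two steps: verifying that the Smith-form data satisfy the \emph{canonical} requirement \ref{def2}(c) rather than merely forming some system of generalized eigenvectors, describing the admissible transformations $G$ and their duals precisely, and tracking the Hankel-type index sums in \eqref{Tinvfull} through the identities $E^{-1}E=I$, $F^{-1}F=I$ and the substitution $V\mapsto VG$. As an alternative to the Smith-form route one can use the direct inductive construction of Mennicken and M\"oller, but the combinatorial core of the argument is unchanged.
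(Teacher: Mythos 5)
The paper does not prove this theorem at all: it is quoted from \cite[Theorem 1.6.5]{MM03}, whose argument is the ``direct'' construction of Mennicken and M\"oller \cite{MM84}, in which the dual chains $w_j^{\ell}$ are obtained by solving the biorthogonality system \eqref{e2.9} recursively and the expansion \eqref{Tinvfull} is then verified for that solution. Your route is the other classical one: the local Smith factorization $T=EDF$, i.e.\ essentially the factorization approach of Gohberg and Sigal \cite{GS71} that the paper's remark after Theorem \ref{theinv} lists as the historical alternative. The trade-off is genuine. The Smith form hands you one distinguished pair of CSGEs for which the partial multiplicities, condition (c) of Definition \ref{def2}, the expansion \eqref{Tinvfull} and the relations \eqref{e2.9} all fall out of $F F^{-1}=I$ and $E^{-1}E=I$; the price is establishing the factorization and then transferring the statement to an \emph{arbitrary} prescribed CSGE $V$. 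The direct approach needs neither, but must prove solvability of \eqref{e2.9} from scratch. Both are legitimate; yours is correct in outline.

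Two steps in your sketch carry most of the weight and are currently only asserted. First, in extracting \eqref{e2.9} from $\eta_\ell(z)T(z)\phi_\nu(z)=\delta_{\ell\nu}(z-\lambda)^{m_\nu}$, matching the coefficient of $(z-\lambda)^{m_\nu+j}$ produces sums over \emph{all} Taylor coefficients of $\phi_\nu$ and $\eta_\ell$, not only the chain members appearing in \eqref{e2.9}; killing the surplus terms containing $v^{\nu}_r$ with $r\ge m_\nu$ requires the full left root-function property (the coefficient of $(z-\lambda)^s$ in $\eta_\ell(z)T(z)$ vanishes for every $s\le m_\ell-1$), not merely that $w_0^{\ell H}T(\lambda)=0$, which settles only the case $j=0$. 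The computation does close up, but this is the step to write out. Second, the transfer to an arbitrary $V$ is the combinatorial heart rather than bookkeeping: you must characterize exactly which transformations $G$ connect two CSGEs (one may add to the $n$-th member of a chain only vectors whose ``leading term'' lies in $N(T(\lambda))$, which is what forces the graded triangular structure), check that the contragredient $G^{-H}$ again yields a CSGE of $T^{H}$ with $r(w_0^{\ell})=m_\ell$, and verify invariance of the Hankel-structured singular part under the simultaneous substitution; the uniqueness claim should then be run in the same graded coordinates so that the triangular system for $W$ visibly has invertible diagonal blocks. (A final small point, which the paper itself also glosses over: for $W$ to be a CSGE of the \emph{holomorphic} function $T^{H}$ one must use the convention $T^{H}(z)=T(\bar z)^{H}$ and keep track of where its eigenvalue sits; your coefficientwise conjugate transposition of the rows of $E(z)^{-1}$ is the right object once this convention is fixed.)
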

\begin{remark} Rather than using generalized eigenvectors one can
also write $T(z)^{-1}$ in terms of left and right root functions,
see \cite[Th.1.5.4]{MM03}.

The representation \eqref{Tinvfull} shows that the order $\kappa$ 
of the pole in \eqref{e2.2} is given by
\begin{equation*}
\kappa = \max\{m_{\ell}: \ell=1,\ldots,L \}.
\end{equation*}
Further, the number $L= \dim(N(T(\lambda)))$ is the geometric multiplicity
while $\sum_{\ell=1}^{L} m_{\ell}$ is the algebraic multiplicity
of $\lambda$.
In the semi-simple case $m_{\ell}=1,l=1,\ldots,L$, equations 
\eqref{Tinvfull} and \eqref{e2.9} simplify to
\begin{equation*} \label{Tinvsemi}
T(z)^{-1} = (z-\lambda)^{-1} \sum_{\ell=1}^{L} v_0^{\ell} w_{0}^{\ell H}
+ R(z),
\end{equation*}
 \begin{equation*} \label{orthsemi}
w_0^{\ell H} T'(\lambda) v_0^{\nu} = \delta_{\nu \ell}, \quad
1 \le \ell,\nu \le L,
\end{equation*}
which in  case $L=1$ further simplify to \eqref{expand1}
and \eqref{e2.5}.

\end{remark}

Consider now all eigenvalues inside a compact set  $\mathcal{C} \subset \Omega$.  
In the same way  as \eqref{Tinv} followed from  \eqref{expand1}, we obtain
from Theorem \ref{thekeldysh} the following corollary.
\begin{corollary} \label{cor1}
Let $\mathcal{C} \subset \Omega$ be compact and $T\in H(\Omega,\mathbb{C}^{m,m})$.
Then $\mathcal{C}$ contains at most finitely many eigenvalues $\lambda_n,n=1,\ldots,n(\mathcal{C})$
with corresponding CSGEs
\begin{equation*}
V_n=\left(v_j^{\ell,n}, 0\le j \le m_{\ell,n}-1, 1 \le \ell \le L_n  \right),
\quad n=1,\ldots,n(\mathcal{C}).
\end{equation*}
Let 
\begin{equation*}
W_n=\left(w_j^{\ell,n}, 0\le j \le m_{\ell,n}-1, 1 \le \ell \le L_n  \right),
\quad n=1,\ldots,n(\mathcal{C})
\end{equation*}
be the corresponding CSGEs of $T^H$ such that
\begin{equation*} \label{e2.10}
r(w_0^{\ell,n}) = m_{\ell,n}
\end{equation*}
and  with $T_{j,n}= \frac{1}{j!} T^{(j)}(\lambda_n)$
\begin{equation*} \label{e2.11}
\sum_{\alpha=0}^{j} \sum_{\beta=1}^{m_{\nu,n}} w_{j-\alpha}^{\ell,n H}
T_{\alpha+\beta,n}\, v_{m_{\nu,n}-\beta}^{\nu,n} = \delta_{\nu \ell}
\delta_{0 j}, 0\le j \le m_{\ell,n}-1, 1\le \ell,\nu \le L_n .
\end{equation*}
Then there exists a neighborhood $\mathcal{C} \subset \mathcal{U} \subset \Omega$
and a function $R\in H(\mathcal{U},\mathbb{C}^{m,m})$ such that
for all $z\in \mathcal{U}\setminus \{\lambda_1,\ldots,\lambda_{n(\mathcal{C})}\}$
\begin{equation*} \label{TinvK}
T(z)^{-1} = \sum_{n=1}^{n(\mathcal{C})} \sum_{\ell=1}^{L_n}\sum_{j=1}^{m_{\ell,n}} 
(z- \lambda)^{-j}
\sum_{\nu=0}^{m_{\ell,n}-j} v_{\nu}^{\ell,n}w_{m_{\ell,n}-j-\nu}^{\ell,n H}
+ R(z).
\end{equation*}
\end{corollary}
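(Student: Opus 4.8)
The plan is to follow exactly the pattern by which \eqref{Tinv} was deduced from \eqref{expand1} in the proof of Theorem \ref{Keldysh1}, now feeding in the full Keldysh expansion \eqref{Tinvfull} at each eigenvalue instead of the rank-one expansion. The whole argument is a patching of local Laurent principal parts.

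First I would establish that $\mathcal{C}$ contains only finitely many eigenvalues. If it contained infinitely many, compactness of $\mathcal{C}$ would force them to have an accumulation point $\lambda^\ast \in \mathcal{C} \subset \Omega$. If $\lambda^\ast \in \rho(T)$ this contradicts the openness of the resolvent set, and if $\lambda^\ast \in \sigma(T)$ it contradicts Theorem \ref{theinv}, by which $\lambda^\ast$ is isolated in $\sigma(T)$. Hence the list $\lambda_1,\dots,\lambda_{n(\mathcal{C})}$ is finite, and — using once more that eigenvalues are isolated — I may fix an open set $\mathcal{U}$ with $\mathcal{C}\subset\mathcal{U}\subset\Omega$ and $\sigma(T)\cap\mathcal{U}=\{\lambda_1,\dots,\lambda_{n(\mathcal{C})}\}$.

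Next, for each $n$ I would invoke Theorem \ref{thekeldysh} at $\lambda_n$ (note $\rho(T)\neq\emptyset$ is assumed): it produces the unique normalized CSGE $W_n$ of $T^H$ at $\lambda_n$ satisfying the conditions \eqref{e2.9}, together with a function holomorphic near $\lambda_n$ such that $T(z)^{-1}$ equals its singular part $P_n(z)$ — the triple sum in \eqref{Tinvfull} with all data carrying the extra index $n$ — plus that holomorphic remainder. Shrinking $\mathcal{U}$ if needed so that each of these local neighborhoods is contained in $\mathcal{U}$, I would then define
\begin{equation*}
R(z) = T(z)^{-1} - \sum_{n=1}^{n(\mathcal{C})} P_n(z), \qquad z\in\mathcal{U}\setminus\{\lambda_1,\dots,\lambda_{n(\mathcal{C})}\}.
\end{equation*}
This is holomorphic on $\mathcal{U}\setminus\{\lambda_1,\dots,\lambda_{n(\mathcal{C})}\}$, since there $T(z)^{-1}$ is holomorphic (that punctured set lies in $\rho(T)$) and each $P_n$ is a matrix-valued rational function whose only pole is at $\lambda_n$. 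Near a fixed $\lambda_n$ one has $R(z) = R_n(z) - \sum_{n'\neq n} P_{n'}(z)$ with $R_n$ holomorphic at $\lambda_n$ and each $P_{n'}$, $n'\neq n$, holomorphic at $\lambda_n$; hence the singularity of $R$ at $\lambda_n$ is removable, $R$ extends to an element of $H(\mathcal{U},\mathbb{C}^{m,m})$, and the asserted identity holds on $\mathcal{U}\setminus\{\lambda_1,\dots,\lambda_{n(\mathcal{C})}\}$.

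I do not expect a genuine obstacle here. The only point that deserves a word of care is the consistency of the data across eigenvalues: the systems $W_n$ are not chosen arbitrarily but are the unique ones singled out by the normalization \eqref{e2.9} of Theorem \ref{thekeldysh}, and $P_n(z)$ is precisely the principal part of the Laurent expansion of the (single-valued) function $T(z)^{-1}$ at $\lambda_n$, so the local contributions are unambiguously determined and fit together with no overlap.
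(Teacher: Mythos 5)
Your proposal is correct and follows essentially the same route as the paper, which proves the corollary only by the remark that it follows from Theorem \ref{thekeldysh} ``in the same way as \eqref{Tinv} followed from \eqref{expand1}'': subtract the Keldysh singular parts at all (finitely many, by compactness and isolatedness of eigenvalues) eigenvalues in $\mathcal{C}$ and observe that the remainder has only removable singularities. Your additional remarks on the finiteness of the eigenvalue set and on the uniqueness of the normalized systems $W_n$ merely make explicit what the paper leaves implicit.
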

As a consequence of the corollary it follows that the order of the pole
in \eqref{e2.2} is given by
\begin{equation*} \label{e2.12}
\kappa = \max\{m_{\ell,n}: 0 \le \ell \le L_n, 1 \le n \le n(\mathcal{C}) \}.
\end{equation*}

Consider now a contour $\Gamma \subset \Omega$, i.e. a simple closed
 curve that has its interior $\mathrm{int}(\Gamma)$ in $\Omega$.
 An easy consequence of the residue theorem is the following result.

\begin{theorem} \label{intresolvent}
	Let $ T \in H(\Omega,\mathbb{C}^{m,m})$ have no eigenvalues on
the contour $\Gamma\subset \Omega$ and denote by  
$ \lambda_n,n=1,\ldots,n(\Gamma) $ the eigenvalues 
in the interior $\mathrm{int}(\Gamma)\subset \Omega$. 
Then with the CSGEs from Corollary \ref{cor1} we have for any 
$ f\in H(\Omega,\mathbb{C}) $  
\begin{equation} \label{residues}
	\frac{1}{2\pi i}\int_{\Gamma}f(z)T(z)^{-1}dz = 
\sum_{n=1}^{n(\Gamma)} \sum_{\ell=1}^{L_n} \sum_{j=1}^{m_{\ell,n}}
\frac{f^{(j-1)}(\lambda_n)}{(j-1)!} 
\sum_{\nu=0}^{m_{\ell,n}-j} v_{\nu}^{\ell,n}w_{m_{\ell,n}-\nu -j}^{\ell,n H}.
\end{equation}
If all eigenvalues are simple the formula reads
\begin{equation} \label{simpleres}
\frac{1}{2 \pi i} \int_{\Gamma} f(z) T(z)^{-1} dz =
\sum_{n=1}^{n(\Gamma)} f(\lambda_n) v_n w_n^H,
\end{equation}
where $v_n,w_n$ are left and right eigenvectors corresponding to $\lambda_n$
and  normalized according to
\begin{equation} \label{residuessimple}
w_{n}^{H} T'(\lambda_n) v_n =1, \quad n=1,\ldots,n(\Gamma).
\end{equation}
\end{theorem}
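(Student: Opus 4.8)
The plan is to derive \eqref{residues} directly from the local Keldysh expansion in Corollary \ref{cor1} together with the residue theorem. First I would invoke Corollary \ref{cor1}: since $T$ has no eigenvalues on $\Gamma$ and only finitely many, $\lambda_1,\dots,\lambda_{n(\Gamma)}$, in $\mathrm{int}(\Gamma)$, there is a neighborhood $\mathcal{U}$ of the compact set $\mathcal{C} = \Gamma \cup \mathrm{int}(\Gamma)$ and a holomorphic $R \in H(\mathcal{U},\mathbb{C}^{m,m})$ with
\begin{equation*}
T(z)^{-1} = \sum_{n=1}^{n(\Gamma)} \sum_{\ell=1}^{L_n}\sum_{j=1}^{m_{\ell,n}} (z-\lambda_n)^{-j} \sum_{\nu=0}^{m_{\ell,n}-j} v_{\nu}^{\ell,n} w_{m_{\ell,n}-j-\nu}^{\ell,n H} + R(z)
\end{equation*}
for $z \in \mathcal{U}\setminus\{\lambda_1,\dots,\lambda_{n(\Gamma)}\}$. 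Since $f$ is holomorphic on $\Omega \supset \mathcal{U}$ and $R$ is holomorphic on all of $\mathcal{U}$, the product $f(z) R(z)$ is holomorphic inside and on $\Gamma$, so $\frac{1}{2\pi i}\int_\Gamma f(z) R(z)\,dz = 0$ by Cauchy's theorem. It remains to integrate the singular part term by term.

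Next I would compute, for each fixed $n$, $\ell$, $j$, the contour integral $\frac{1}{2\pi i}\int_\Gamma f(z)(z-\lambda_n)^{-j}\,dz$. The only singularity of the integrand inside $\Gamma$ is the pole of order $j$ at $\lambda_n$, and the residue theorem (or equivalently the Cauchy integral formula for derivatives) gives this integral equal to $\frac{f^{(j-1)}(\lambda_n)}{(j-1)!}$. Pulling the (constant in $z$) vector/covector factors $v_\nu^{\ell,n} w_{m_{\ell,n}-j-\nu}^{\ell,n H}$ out of the integral and summing over $n$, $\ell$, $j$, $\nu$ yields exactly \eqref{residues}. For the simple case, one specializes: each $\lambda_n$ has $L_n = 1$, $m_{1,n} = 1$, so the double inner sums collapse, $j = 1$ forces $f^{(0)}(\lambda_n)/0! = f(\lambda_n)$ and $\nu = 0$; writing $v_n := v_0^{1,n}$, $w_n := w_0^{1,n}$ gives \eqref{simpleres}, and the normalization \eqref{residuessimple} is just the $L=1$, $j=0$ instance of \eqref{e2.9}, i.e. $w_n^H T'(\lambda_n) v_n = 1$, consistent with \eqref{normj} and \eqref{e2.5}.

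There is really no serious obstacle here — the theorem is a routine corollary of Keldysh's theorem and the residue theorem, exactly as the text's remark "an easy consequence of the residue theorem" advertises. The only point requiring a word of care is the justification that the single neighborhood $\mathcal{U}$ and single holomorphic remainder $R$ from Corollary \ref{cor1} cover all of $\mathrm{int}(\Gamma)$ simultaneously, so that $\Gamma$ can be deformed without crossing singularities of $R$; this is precisely what Corollary \ref{cor1} provides, since $\mathcal{C}$ may be taken as the closed region bounded by $\Gamma$. A secondary bookkeeping point is to make sure the index in the inner sum of \eqref{residues} matches: after applying the residue formula the surviving weight on $v_\nu^{\ell,n} w_{m_{\ell,n}-j-\nu}^{\ell,n H}$ is $\frac{f^{(j-1)}(\lambda_n)}{(j-1)!}$, and one simply renames $m_{\ell,n}-j-\nu$ as $m_{\ell,n}-\nu-j$, which is the form stated. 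Everything else is term-by-term integration of a finite sum.
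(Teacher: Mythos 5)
Your proof is correct and follows the same route as the paper: apply Corollary \ref{cor1} to the compact set $\mathcal{C}=\Gamma\cup\mathrm{int}(\Gamma)$, discard the holomorphic remainder by Cauchy's theorem, and evaluate the singular part termwise via the residue $\frac{1}{2\pi i}\int_\Gamma f(z)(z-\lambda_n)^{-j}\,dz=\frac{f^{(j-1)}(\lambda_n)}{(j-1)!}$. The paper's proof is just a one-sentence compression of exactly this argument, so no further comparison is needed.
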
 
\begin{proof} Corollary \ref{cor1} applies to 
$\mathcal{C}=\mathrm{int}(\Gamma) \cup \Gamma $, where the function 
$ f(z)T(z)^{-1}$  has residues  at $ \lambda_j $
given by the right-hand side of \eqref{residues}. The special case
$L_n=1,m_{0n}=1,n=1,\ldots,n(\Gamma)$ yields equation \eqref{simpleres}.
\end{proof}

\section{The algorithm for a few eigenvalues}
\label{sec3}
In the following we set up an algorithm for computing all eigenvalues
of $T\in H(\Omega,\mathbb{C}^{m,m})$ inside a given contour $\Gamma$
in $\Omega$. We assume that the sum of all algebraic multiplicities
\begin{equation} \label{kdef}
k = \sum_{n=1}^{n(\Gamma)}\sum_{\ell=1}^{L_n} m_{\ell,n}
\end{equation}
is less than or equal to the system dimension $m$. For the opposite case
we refer to Section \ref{sec5}. 
In high-dimensional problems we actually expect to have $k \ll m$.

\subsection{Simple eigenvalues inside the contour}
\label{sec3.1}

As in the second part of Theorem \ref{intresolvent}, let us assume
that all eigenvalues $\lambda_1,\ldots,\lambda_{n(\Gamma)}$ in 
$\mathrm{int}(\Gamma)$ are simple so that $k=n(\Gamma)$. We introduce
 the matrices
\begin{equation*} \label{vwmat}
V=\begin{pmatrix} v_1 \ldots v_{k} \end{pmatrix}, 
W=\begin{pmatrix} w_1 \ldots w_{k} \end{pmatrix} \in \mathbb{C}^{m,k}.
\end{equation*}
We assume that we have chosen a  matrix 
\begin{equation*} \label{wrand}
	\hat{V}\in \mathbb{C}^{m,l}, \quad k \le l \le m,
\end{equation*}
such that
\begin{equation} \label{Wcond}
W^H \hat{V} \in \mathbb{C}^{k,l} \quad \text{has rank} \quad k.
\end{equation}
In particular, this implies $\mathrm{rank}(W)=k$. In the applications we 
choose $\hat{V}$ at random (see Section \ref{sec4}), so that \eqref{Wcond} can be expected to
hold in a generic sense if $\mathrm{rank}(W)=k$. We note that
(in contrast to linear eigenvalue problems)
it is easy to construct nonlinear eigenvalue problems for which $W$ is rank
deficient. However, this seems to be a nongeneric situation for typical 
applications.
In addition to \eqref{Wcond} we assume
\begin{equation} \label{Vcond}
\mathrm{rank}(V) = k,
\end{equation}
which again is expected to hold in generic cases.

Next we compute the two integrals
\begin{equation} \label{A0}
	A_0 = \frac{1}{2\pi i}\int_{\Gamma}T(z)^{-1}\hat{V}dz \in \mathbb{C}^{m,l}
\end{equation}
\begin{equation} \label{A1}
	A_1 = \frac{1}{2\pi i}\int_{\Gamma}z T(z)^{-1}\hat{V}dz \in \mathbb{C}^{m,l}.
\end{equation}
The evaluation of these integrals by quadrature rules is by far the most
expensive part of the algorithm and will be discussed below.
Note also, that in the linear case $T(z)=zI-A$ the matrix $A_0$ is
obtained by applying to $\hat{V}$ the Riesz projector onto the invariant 
subspace associated with all eigenvalues inside $\Gamma$.
 
By \eqref{simpleres} we obtain
\begin{equation} \label{A0rep}
	A_0 = \sum_{n=1}^{k}v_nw_n^H\hat{V}=VW^H\hat{V}.
\end{equation}
Similarly,
\begin{equation} \label{A1rep}
	A_1 = \sum_{n=1}^{k}\lambda_nv_nw_n^T\hat{V}=V \Lambda W^H\hat{V}, \quad 
\Lambda=\text{diag}(\lambda_n,n=1,\ldots,k).
\end{equation}

In the next step we compute the singular value decomposition (SVD)
  of $A_0$ in reduced form
\begin{equation}
   VW^H\hat{V} = A_0= V_0\Sigma_0W_0^H
	\label{A0svd}
\end{equation}
where $
V_0\in \mathbb{C}^{m,k}, \Sigma_0=\text{diag}(\sigma_{1},\ldots,
\sigma_{k}),W_0\in\mathbb{C}^{l,k},V_0^HV_0=I_k,W_0^HW_0=I_k
$.
Note that the rank conditions \eqref{Wcond},\eqref{Vcond} show
that $\mathrm{rank}(A_0)=k$, hence $A_0$ has singular values
\begin{equation*} \label{singval}
 \sigma_{1} \geq \ldots \sigma_{k} > 0= \sigma_{k+1}=  \ldots 
= \sigma_{l}.
\end{equation*}
By the rank condition \eqref{Vcond} we have
\begin{equation*}
R(A_0)=R(V)=R(V_0).
\end{equation*}
Since both, $V_0$ and $V$ are $m \times k$ matrices and $V_0$ has orthonormal
columns, we obtain  
\begin{equation} \label{Srel}
V = V_0 S, \quad  S=V_0^H V \in \mathbb{C}^{k,k} \; \text{nonsingular}. 
\end{equation}
 With (\ref{A0rep}), (\ref{Srel}) we find
	$V_0SW^H\hat{V} = V_0\Sigma_0W_0^H$ and thus 
\begin{equation*}
W^H\hat{V} = S^{-1} \Sigma_0W_0^H.
\end{equation*} 
This relation is used to eliminate $W^H \hat{V}$ from
		$A_1= V_0 S \Lambda W^H\hat{V} $.
We obtain
\begin{equation*}
  V_0^H A_1= S \Lambda W^H \hat{V} =
		S \Lambda S^{-1}\Sigma_0W_0^H,
\end{equation*}
which upon multiplication by $ W_0 \Sigma_0^{-1}$ from the right 
 finally gives
\begin{equation} \label{A1end}
			S \Lambda S^{-1} = V_0^H A_1 W_0\Sigma_0^{-1}.
\end{equation}
Note that the right-hand side is a computable matrix which is diagonalizable
and has as eigenvalues exactly the eigenvalues of $T$ inside the contour.
We summarize the result in a theorem.
		
\begin{theorem} \label{alg1}
Suppose that $T \in H(\Omega,\mathbb{C}^{m,m})$ has only simple eigenvalues
$\lambda_1,\ldots,\lambda_k$ inside the contour $\Gamma$ in $\Omega$ 
with left and right eigenvectors
normalized as in \eqref{residuessimple}. Moreover, let a matrix 
$\hat{V} \in \mathbb{C}^{m,l}$ be given such that $k \le l \le m$ and
 the rank conditions \eqref{Wcond},\eqref{Vcond} are satisfied.
Then the matrix
\begin{equation} \label{beq}
B=V_0^H A_1 W_0\Sigma_0^{-1} \in \mathbb{C}^{k.k},
\end{equation}
given by \eqref{A0},\eqref{A1} and the SVD \eqref{A0svd},
is diagonalizable with eigenvalues $\lambda_1,\ldots,\lambda_k$.
From the eigenvectors $s_1,\ldots,s_k \in \mathbb{C}^{k}$ of $B$
one obtains the eigenvectors of $T$ through
\begin{equation*} \label{evrep}
v_n = V_0 s_n, \quad n=1,\ldots,k.
\end{equation*}
\end{theorem}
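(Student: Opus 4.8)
The plan is to follow the derivation preceding the statement and turn it into a clean proof. First I would invoke Theorem~\ref{intresolvent}, equation~\eqref{simpleres}, applied columnwise to $\hat V$ with $f\equiv 1$ and with $f(z)=z$, to obtain the representations $A_0 = VW^H\hat V$ and $A_1 = V\Lambda W^H\hat V$ with $\Lambda=\mathrm{diag}(\lambda_1,\ldots,\lambda_k)$. Since the $\lambda_n$ are simple they are in particular pairwise distinct eigenvalues, so $\Lambda$ is a diagonal matrix with distinct entries.

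Next I would carry out the rank bookkeeping. By \eqref{Wcond} the map $W^H\hat V:\mathbb{C}^l\to\mathbb{C}^k$ has full row rank $k$, hence is surjective; by \eqref{Vcond} the map $V:\mathbb{C}^k\to\mathbb{C}^m$ has full column rank $k$, hence is injective. Therefore $\mathrm{rank}(A_0)=\mathrm{rank}(VW^H\hat V)=k$ and $R(A_0)=R(V)$. Consequently the reduced SVD $A_0=V_0\Sigma_0W_0^H$ has exactly $k$ positive singular values, so $\Sigma_0$ is invertible, $V_0\in\mathbb{C}^{m,k}$ and $W_0\in\mathbb{C}^{l,k}$ have orthonormal columns, and $R(V_0)=R(A_0)=R(V)$. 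Since $V_0$ and $V$ are both $m\times k$ of rank $k$ with the same column space and $V_0^HV_0=I_k$, the matrix $S:=V_0^HV\in\mathbb{C}^{k,k}$ satisfies $V=V_0S$ and is nonsingular.

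Then comes the algebraic elimination. From $V_0SW^H\hat V=A_0=V_0\Sigma_0W_0^H$ and $V_0^HV_0=I_k$ we obtain $W^H\hat V=S^{-1}\Sigma_0W_0^H$. Substituting this into $A_1=V_0S\Lambda W^H\hat V=V_0S\Lambda S^{-1}\Sigma_0W_0^H$ and multiplying on the left by $V_0^H$, then on the right by $W_0$, then on the right by $\Sigma_0^{-1}$ (using $V_0^HV_0=I_k$ and $W_0^HW_0=I_k$) yields $B=V_0^HA_1W_0\Sigma_0^{-1}=S\Lambda S^{-1}$. Hence $B$ is similar to $\Lambda$, so it is diagonalizable with eigenvalues $\lambda_1,\ldots,\lambda_k$. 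Writing $e_n$ for the standard basis vectors of $\mathbb{C}^k$, we have $B(Se_n)=S\Lambda e_n=\lambda_nSe_n$, so $s_n:=Se_n$ is an eigenvector of $B$ for $\lambda_n$, and $V_0s_n=V_0Se_n=Ve_n=v_n$, which is the asserted reconstruction of the eigenvectors of $T$.

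The only point requiring genuine care --- the \emph{main obstacle}, such as it is --- is the rank argument of the second paragraph: one must check that $\mathrm{rank}(A_0)$ equals $k$ exactly (not merely $\le k$) and that $R(A_0)=R(V)$, so that the reduced SVD really does produce an invertible $\Sigma_0$ together with a $V_0$ whose column space coincides with $R(V)$. Both facts follow by combining the two rank hypotheses \eqref{Wcond} and \eqref{Vcond}, but it is worth spelling out. Everything else is routine linear algebra once the residue representations of $A_0$ and $A_1$ are in hand.
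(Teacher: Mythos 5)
Your proposal is correct and follows essentially the same route as the paper: the paper's ``proof'' is precisely the derivation in Section~\ref{sec3.1} preceding the theorem (residue representations \eqref{A0rep}--\eqref{A1rep}, the rank argument giving $R(A_0)=R(V)=R(V_0)$ and the nonsingular $S$ in \eqref{Srel}, and the elimination leading to \eqref{A1end}). Your explicit verification that $s_n=Se_n$ recovers $v_n=V_0s_n$ is a small but welcome addition that the paper leaves implicit.
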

\begin{remarks} \label{rem3.2}
\noindent
(a) For reasons of numerical stability we may replace
$A_1$ by
\begin{equation*}
\tilde{A}_1 = \frac{1}{2 \pi i} \int_{\Gamma}(z-z_0)T(z)^{-1}\hat{V} dz
= A_1 - z_0 A_0.
\end{equation*}
For example, in case of a circle $\Gamma$, one can take $z_0$ as its
center.
Then \eqref{A1rep} holds with $\Lambda - z_0$ instead of $\Lambda$ and
the matrix $\tilde{B}= V_0^H \tilde{A}_1 W_0 \Sigma_0^{-1}$ has
eigenvalues $\lambda_n-z_0$. Therefore, the eigenvalues of $T$ are found by
adding $z_0$ to the eigenvalues of $\tilde{B}$.

\noindent
(b) The rank conditons in the theorem are crucial. Assume, for example,
that $A_0=V W^H \hat{V}$ has rank $k_0 < k$. Then the SVD \eqref{A0svd}
holds with matrices $W_0\in \mathbb{C}^{l,k_0}, V_0 \in \mathbb{C}^{m,k_0}$
and $\Sigma_0= \mathrm{diag}(\sigma_1,\ldots,\sigma_{k_0} )$. Moreover,
we have $S\in \mathbb{C}^{k_0,k}$ in \eqref{Srel}, 
$B \in \mathbb{C}^{k_0,k_0}$
in \eqref{beq}. Finally we find $B= S \Lambda \tilde{S}$ where 
$\tilde{S}= W^H\hat{V}W_0 \Sigma_0^{-1}$ satisfies $S \tilde{S}= I_{k_0}$.
Except for the case, when $S$ has some zero columns this does
not lead to a useful relation between the eigenvalues of $B$ and $\Lambda$.
For numerical computations we therefore recommend to test the residuals
$||T(\lambda_n)v_n||$, see Section \ref{sec3.3}. A general cure of
this rank deficient case is provided by the generalized algorithm
in Section \ref{sec5} which, however, is computationally more
expensive.
\end{remarks}
\subsection{Multiple eigenvalues inside the contour}
\label{sec3.2}
Let us consider the general case where $T \in H(\Omega,\mathbb{C}^{m,m})$
has no eigenvalues on the contour $\Gamma$ but may have multiple eigenvalues
inside. We apply Corollary \ref{cor1}
to the compact set $\mathcal{C}=\Gamma \cup \mathrm{int}(\Gamma)$ and assume that 
the matrix composed of all CSGEs that belong to eigenvalues inside $\Gamma$,
\begin{equation} \label{vdef}
V=\left(v_j^{\ell,n}, 0\le j \le m_{\ell,n}-1, 1 \le \ell \le L_n,
1 \le n \le n(\Gamma)   \right),
\end {equation}
has rank $k$, cf. \eqref{kdef}. Then, using  Theorem \ref{intresolvent}
with $f(z)=1$ shows that $A_0$, as defined in \eqref{A0}, satisfies
\begin{equation*} \label{A0repmult}
A_0 = \sum_{n=1}^{n(\Gamma)} \sum_{\ell=1}^{L_n} \sum_{\nu=0}^{m_{\ell,n}-1} 
v_{\nu}^{\ell,n}w_{m_{\ell,n}-1-\nu}^{\ell,n H} \hat{V}.
\end{equation*} 
Further, we assume that the matrix
\begin{equation} \label{nondegmult}
W^H \hat{V} \in \mathbb{C}^{k,l}
\end{equation}
has maximum rank $k$, where 
\begin{equation} \label{wdef}
W=\left(w_{m_{\ell,n}-1-\nu}^{\ell,n}, 0\le \nu \le m_{\ell,n}-1, 1 \le \ell \le L_n,
1 \le n \le n(\Gamma)   \right) \in \mathbb{C}^{m,k},
\end{equation}
is normalized as in Theorem \ref{thekeldysh}.
With Theorem \ref{intresolvent} we then find
\begin{equation*} \label{A1repmult}
A_1 = \sum_{n=1}^{n(\Gamma)} \sum_{\ell=1}^{L_n}\left[ 
\lambda_n 
\sum_{\nu=0}^{m_{\ell,n}-1}  v_{\nu}^{\ell,n}w_{m_{\ell,n}-1-\nu}^{\ell,n H}
+\sum_{\nu=0}^{m_{\ell,n}-2}  v_{\nu}^{\ell,n}w_{m_{\ell,n}-2-\nu}^{\ell,n H} 
 \right] \hat{V}
= V \Lambda W^H \hat{V},
\end{equation*} 
where $\Lambda$ has Jordan normal form
\begin{equation} \label{lambdanormal}
\Lambda = \begin{pmatrix} J_1 & & \\
                      & \ddots & \\
 & & J_{n(\Gamma)}  
           \end{pmatrix}, 
 \;
J_n = \begin{pmatrix} J_{n,1} & & \\
                      & \ddots & \\
 & & J_{n,L_n}  
           \end{pmatrix},
\;
J_{n,\ell} = \begin{pmatrix} \lambda_n &1 & \\
                      & \ddots & \ddots\\
 & & \lambda_n  
           \end{pmatrix}.
\end{equation}
As in Section \ref{sec3.1} the next steps are the SVD \eqref{A0svd}
for $A_0$ and the computation of 
$B=V_0^H A_1 W_0\Sigma_0^{-1} \in \mathbb{C}^{k.k}$.
Then  $B$ has eigenvalues $\lambda_1,\ldots,\lambda_{n(\Gamma)}$
and its Jordan normal form has the same partial multiplicities
as $T(z)$.

\begin{theorem} \label{algmult}
Suppose that $T \in H(\Omega,\mathbb{C}^{m,m})$ has no eigenvalues on
the contour $\Gamma$ in $\Omega$ and pairwise distinct eigenvalues
$\lambda_n,n=1,\ldots,n(\Gamma)$ inside $\Gamma$ with partial
multiplicities $m_{1,n} \ge \ldots \ge m_{L_n,n}, n=1,\ldots,n(\Gamma)$.
Moreover, assume that the matrix of generalized eigenvectors from \eqref{vdef}
and the matrix $W^H \hat{V}$ from \eqref{nondegmult} have rank $k$ with 
$k$ given by \eqref{kdef}.
 Then the matrix $B\in \mathbb{C}^{k,k}$ from \eqref{beq} has Jordan normal
form \eqref{lambdanormal} with the same eigenvalues $\lambda_n$
and partial multiplicities   $m_{\ell,n}$ 
($\ell=1,\ldots,L_n,n=1,\ldots,n(\Gamma)$). 
Suitable CSGEs for $T$ can be obtained from corresponding CSGEs 
$s_j^{\ell,n}$ for $B$ via
\begin{equation*} \label{gevrep}
v_j^{\ell,n} = V_0 s_j^{\ell,n}, \quad 0\le j \le m_{\ell,n}-1, 
1\le \ell \le L_n, 1\le n \le n(\Gamma).
\end{equation*}
\end{theorem}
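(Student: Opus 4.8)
The plan is to mimic the argument of Section~\ref{sec3.1} exactly, replacing the diagonal decomposition of $A_0$ and $A_1$ by the Jordan-based factorizations already established just before the statement. The key structural fact is that, thanks to Corollary~\ref{cor1} and Theorem~\ref{intresolvent} with $f(z)=1$ and $f(z)=z$, we have the factorizations $A_0 = VW^H\hat V$ and $A_1 = V\Lambda W^H\hat V$ with $V$ the matrix of generalized eigenvectors from \eqref{vdef}, $W$ the associated dual system from \eqref{wdef}, and $\Lambda$ in the Jordan form \eqref{lambdanormal}. Under the rank hypotheses $\operatorname{rank}(V)=k$ and $\operatorname{rank}(W^H\hat V)=k$, both $A_0$ and $A_1$ have rank $k$, so the reduced SVD $A_0 = V_0\Sigma_0 W_0^H$ has $V_0\in\mathbb C^{m,k}$, $W_0\in\mathbb C^{l,k}$ with orthonormal columns and $\Sigma_0$ invertible.

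First I would establish the change-of-basis relation: since $R(A_0)=R(V)$ (using $\operatorname{rank}(V)=k$ and that right-multiplication by the full-rank $W^H\hat V$ does not shrink the column space) and $R(A_0)=R(V_0)$, and since $V_0$ has orthonormal columns, we get $V = V_0 S$ with $S = V_0^H V \in\mathbb C^{k,k}$ nonsingular, just as in \eqref{Srel}. Substituting $V=V_0S$ into $A_0 = VW^H\hat V = V_0\Sigma_0W_0^H$ and cancelling the left factor $V_0$ (legitimate because $V_0^HV_0=I_k$) yields $W^H\hat V = S^{-1}\Sigma_0 W_0^H$. Next, plugging $V=V_0S$ and this expression for $W^H\hat V$ into $A_1 = V\Lambda W^H\hat V$ gives $A_1 = V_0 S\Lambda S^{-1}\Sigma_0 W_0^H$; multiplying on the left by $V_0^H$ and on the right by $W_0\Sigma_0^{-1}$ (using $W_0^HW_0=I_k$) produces $B = V_0^H A_1 W_0\Sigma_0^{-1} = S\Lambda S^{-1}$. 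This is the core identity: $B$ is similar to $\Lambda$, hence has eigenvalues $\lambda_1,\dots,\lambda_{n(\Gamma)}$ and the same Jordan structure, in particular the same partial multiplicities $m_{\ell,n}$.

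Finally, for the recovery of generalized eigenvectors: if $S_B$ is a matrix of CSGEs for $B$ (i.e. $B$ in the Jordan basis $S_B$ has the form \eqref{lambdanormal}), then from $B = S\Lambda S^{-1}$ we get $\Lambda = (S^{-1}S_B)^{-1} B (S^{-1}S_B)$ is already in that same Jordan form, so $S^{-1}S_B$ commutes appropriately with $\Lambda$; the cleanest route is to observe directly that if $(s_0^{\ell,n},\dots,s_{m_{\ell,n}-1}^{\ell,n})$ is a Jordan chain for $B$, i.e. $(B-\lambda_n I)s_j^{\ell,n} = s_{j-1}^{\ell,n}$ (with $s_{-1}=0$), then $v_j^{\ell,n} := V_0 s_j^{\ell,n}$ satisfies $(\text{appropriate chain relation})$ relative to $T$ at $\lambda_n$. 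Concretely, since $V = V_0 S$ maps the Jordan basis of $\Lambda$ onto the columns of $V$, and $S_B$ relates the Jordan basis of $B$ to that of $\Lambda$ via $S$, one checks that the columns $V_0 s_j^{\ell,n}$ span the correct generalized eigenspaces and form a CSGE of $T$ at each $\lambda_n$; this is the content of the last display in the theorem. I expect this last point — verifying that the pulled-back chains are genuinely a CSGE of $T$ (satisfying conditions (a)--(c) of Definition~\ref{def2}(iv), not merely spanning the right space) — to be the main subtlety, since it requires tracking how the partial-multiplicity maximality conditions transfer through the similarity $S$; everything up to and including $B=S\Lambda S^{-1}$ is routine linear algebra once the factorizations and rank conditions are in hand.
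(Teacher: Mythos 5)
Your proposal follows the paper's own derivation essentially verbatim: the paper establishes $A_0=VW^H\hat{V}$ and $A_1=V\Lambda W^H\hat{V}$ with $\Lambda$ in the Jordan form \eqref{lambdanormal} and then repeats the Section~\ref{sec3.1} computation to arrive at $B=S\Lambda S^{-1}$, exactly as you do, and like you it leaves the transfer of canonical chains through the similarity $S$ essentially as an assertion. The only slip is your parenthetical claim that $A_1$ also has rank $k$ (false if $0$ is an eigenvalue inside $\Gamma$, since then $\Lambda$ is singular), but you never use it.
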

\begin{remark} Essentially, the theorem reduces  the nonlinear 
problem  for eigenvalues inside a contour to a linear eigenvalue
problem for a $k \times k$-matrix. The linear eigenvalue problem inherits
the multiplicity structure of the nonlinear problem. As usual, computing
the Jordan normal form is not a stable process 
and other forms, such as the Schur form, are recommended.
A closer look at the derivation of the algorithm \eqref{A0svd},\eqref{A1end}
shows that it is sufficient to have a rank revealing $QR$-decomposition.
One would then replace $W_0 \Sigma_0^{-1}$ in \eqref{A1end} by the inverse
of the maximum rank upper triangular submatrix.
 \end{remark}
\subsection{Quadrature and numerical realization}
\label{sec3.3}
The major step in the algorithm consists in evaluating the integrals
\eqref{A0} and \eqref{A1} by numerical quadrature and by solving the linear
systems involved in the evaluation of the integrand.
We assume that $\Gamma$ has a $2 \pi$-periodic smooth parameterization
\begin{equation*} \label{param}
\varphi \in C^1(\mathbb{R},\mathbb{C}), \quad 
\varphi(t+2 \pi)=\varphi(t) \quad \forall t\in \mathbb{R}.
\end{equation*}
Of particular interest is the real analytic case $\varphi \in
C^{\omega}(\mathbb{R},\mathbb{C})$.
Taking equidistant nodes $ t_j=\frac{2 j \pi}{N}, j=0,\ldots,N$
and using the trapezoid sum, we find the following approximations
\begin{equation} \label{trap0}
\begin{aligned}
A_0 =&  \frac{1}{2 \pi i} \int_0^{2 \pi} T(\varphi(t))^{-1} \hat{V} 
\varphi'(t) dt  \approx \\
A_{0,N} = & \frac{1}{iN} \sum_{j=0}^{N-1} T(\varphi(t_j))^{-1} \hat{V}
 \varphi'(t_j),
\end{aligned}
\end{equation}
where we used $\varphi(t_0)=\varphi(t_N)$.
Similarly,
\begin{equation} \label{trap1}
A_1 \approx A_{1,N} = \frac{1}{iN} \sum_{j=0}^{N-1}T(\varphi(t_j))^{-1} \hat{V}
\varphi(t_j) \varphi'(t_j).
\end{equation}
In order to compute $A_{0,N}$ we need to solve $Nl$ linear systems
with $N$ different matrices $T(\varphi(t_j)),j=0,\ldots,N-1$
and with $l$ different right-hand sides each.
Note that we can use the solutions of these linear systems to compute
$A_{1,N}$ at almost no extra cost.
For the special case of a circle $\varphi(t)=
\mu + R e^{it}$ we obtain the formulas 
\begin{equation*} \label{quadcirc}
\begin{aligned}
A_{0,N} = &\frac{R}{N} \sum_{j=0}^{N-1} T(\varphi(t_j))^{-1} \hat{V}
\exp(\frac{ 2 \pi i j}{N}) , \\
A_{1,N} = & \mu A_{0,N} + \frac{R^2}{N} \sum_{j=0}^{N-1} T(\varphi(t_j))^{-1} \hat{V}
\exp(\frac{ 4 \pi i j}{N}).
\end{aligned}
\end{equation*}
The algorithm can be summarized as follows:
\smallskip

\noindent
{\bf Integral algorithm 1}
\smallskip

\noindent
{\bf Step 1:} Choose an index $l \le m$ and a matrix 
$\hat{V}\in \mathbb{C}^{m,l}$ at random.
\smallskip

\noindent 
{\bf Step 2:} Compute $A_{0,N}$,$A_{1,N}$ from \eqref{trap0},\eqref{trap1}.
\smallskip

\noindent
{\bf Step 3:} Compute the SVD $A_{0,N}=V \Sigma W^H$, where \\
$V\in \mathbb{C}^{m,l}$, $W \in \mathbb{C}^{l,l}$, $V^HV=W^HW=I_l$, 
$\Sigma=\mathrm{diag}(\sigma_1,\sigma_2,\ldots,\sigma_l)$.
\smallskip

\noindent
{\bf Step 4:} Perform a rank test for $\Sigma$, i.e.
 find $0<k\le l$ such that \\
$\sigma_1 \ge \ldots \ge \sigma_k > \mathrm{tol}_{\mathrm{rank}}
> \sigma_{k+1} \approx \ldots
\approx \sigma_l \approx 0 $. \\
If $k=l$ then increase $l$ and go to Step 1.\\
Else let $V_0=V(1:m,1:k),W_0=W(1:l,1:k)$ and \\ 
$\Sigma_0=\mathrm{diag}(\sigma_1,\sigma_2,\ldots,\sigma_k)$.
\smallskip

\noindent
{\bf Step 5:} Compute $B=V_0^H A_{1,N} W_0 \Sigma_0^{-1} \in 
\mathbb{C}^{k,k}$.
\smallskip

\noindent
{\bf Step 6:} Solve the eigenvalue problem for $B$ \\
$BS =S \Lambda$, $S=(s_1 \ldots s_k), \Lambda=
\mathrm{diag}(\lambda_1,\ldots,\lambda_k)$.\\
If $||T(\lambda_j)v_j||\le \mathrm{tol}_{\mathrm{res}}$  and 
$\lambda_j \in \mathrm{int}(\Gamma)$
accept $v_j=V_0 s_j$ as eigenvector and $\lambda_j$ as eigenvalue. 

\begin{remarks}
(a) If we find $k=l$ positive singular values in Step 4 then we take
this as an indication that there may be more than $l$ eigenvalues
(including multiplicities) inside $\Gamma$. We then increase $l$ until
a rank drop is detected in Step 4.\\
(b) In general, it is more efficient to compute 
$A_{1,N}$ in Step 5, when the index $k$ has 
been determined. Then one has to store the solutions of the
linear systems solved during the evaluation of $A_{0,N}$.
\\
(c) As noted in Remark \ref{rem3.2}(b) the algorithm may fail due
to linear dependency of (generalized) eigenvectors. Therefore, we
include a test of the residual. Moreover, as the experiments in 
Section \ref{sec4} show, eigenvalues close to the contour, either inside
or outside $\Gamma$,  may lead to difficulties in the rank test.
Therefore, the trivial test $\lambda_j \in \mathrm{int}(\Gamma)$ 
is included in Step 6 as well.
\\
(d) In Step 6 we assumed that eigenvalues are simple. If multiplicities
occur or $B$ is only brought into upper triangular form, then the eigenvalues
can still be read off from the diagonal, and the structure of eigenvectors
can be retrieved from $V_0 S$.
\end{remarks}
\section{Error analysis and numerical examples  }
\label{sec4}
\subsection{Error analysis}
\label{sec4.1}
Standard results on the trapezoid sum for holomorphic periodic integrands
imply exponential convergence at a rate that depends on the
number of nodes times the width of the horizontal strip of holomorphy,
see \cite{D59},\cite[4.6.5]{DR84}. Applications of these results to
the computation of matrix functions via contour integrals appear 
in \cite{HHT08}.
\begin{theorem} 
\label{thestrip}
Let $f \in H(S(d_-,d_+), \mathbb{C})$ be $2\pi$-periodic on the strip
\begin{equation*} \label{err1}
S(d_-,d_+) = \{ z \in \mathbb{C}: - d_- < \im z < d_+ \},
\quad d_{\pm} > 0.
\end{equation*}
Then the error of the trapezoid sum
\begin{equation*}
\label{err2}
E_N(f)= \frac{1}{2\pi}\int_0^{2 \pi} f(x) dx 
- \frac{1}{N} \sum_{j=0}^{N-1} f(\frac{2 \pi j}{N})
\end{equation*}
satisfies for all $0 < r_- < d_-, 0<r_+ < d_+$
\begin{equation*} \label{err3}
|E_N(f)| \le \max_{\im(z)=r_+}|f(z)| \; G(e^{-N r_+}) +
             \max_{\im(z)=r_-}|f(z)| \; G(e^{-N r_-}),
\end{equation*}
where $G(x)= \frac{x}{1-x}, x \neq 1$.
\end{theorem}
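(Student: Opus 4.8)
The plan is to use the classical Fourier-analytic (aliasing) argument for the trapezoid rule on periodic functions, as in \cite[4.6.5]{DR84}. Since $f$ is $2\pi$-periodic and holomorphic on the open strip $S(d_-,d_+)$, it admits a Fourier expansion
\[
f(x) = \sum_{k\in\mathbb{Z}} c_k e^{ikx}, \qquad c_k = \frac{1}{2\pi}\int_0^{2\pi} f(x)e^{-ikx}\,dx .
\]
The first step is to establish exponential decay of the coefficients: for $0<r_-<d_-$ and $0<r_+<d_+$ one has $|c_k|\le e^{-k r_-}\max_{\im z = -r_-}|f(z)|$ for $k\ge 0$ and $|c_k|\le e^{k r_+}\max_{\im z = r_+}|f(z)|$ for $k\le 0$. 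I would obtain these by shifting the path of integration in $c_k$ from the real axis down to the line $\im z=-r_-$ (for $k>0$), respectively up to $\im z=r_+$ (for $k<0$). Cauchy's theorem applies since $f$ is holomorphic on the closed sub-strip in between, and the two vertical sides of the corresponding rectangle cancel because the integrand $f(z)e^{-ikz}$ is itself $2\pi$-periodic ($k\in\mathbb{Z}$); the exponential factor is precisely $|e^{-ikz}|$ on the shifted line, and $f$ is continuous, hence by periodicity bounded on that line, so the maxima are finite.

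Second, I would derive the aliasing identity. On a pure mode $e^{ikx}$ the trapezoid sum equals $\frac1N\sum_{j=0}^{N-1}e^{2\pi ijk/N}$, which is $1$ if $N\mid k$ and $0$ otherwise. The bounds from Step 1 render the Fourier series absolutely and uniformly convergent on $\mathbb{R}$, so summation and the finite quadrature sum may be interchanged, giving
\[
\frac1N\sum_{j=0}^{N-1}f\!\left(\tfrac{2\pi j}{N}\right) = \sum_{n\in\mathbb{Z}} c_{nN},
\]
while $\frac1{2\pi}\int_0^{2\pi}f\,dx = c_0$. Subtracting yields $E_N(f) = -\sum_{n\neq 0} c_{nN}$.

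Third, I would insert the decay estimates and sum two geometric series. Splitting into $n\ge 1$ and $n\le -1$ and using the respective bounds,
\[
|E_N(f)| \le \max_{\im z = r_+}|f(z)|\sum_{n\ge 1}e^{-nN r_+} \; + \; \max_{\im z = -r_-}|f(z)|\sum_{n\ge 1}e^{-nN r_-},
\]
and since $\sum_{n\ge 1}x^n = x/(1-x) = G(x)$ for $0<x<1$, applied with $x=e^{-N r_\pm}\in(0,1)$, the stated estimate follows.

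The argument is completely standard, so there is no real obstacle; the only points needing care are the justification of the contour shift for a function merely assumed holomorphic on the \emph{open} strip (handled by working with $r_\pm<d_\pm$, where $f$ is continuous and, by periodicity, bounded on the closed sub-strip) and the legitimacy of exchanging the infinite Fourier sum with the finite quadrature sum (licensed by the absolute uniform convergence furnished by the coefficient bounds). A minor bookkeeping point is that the maximum in the second term is naturally taken on the line $\im z=-r_-$.
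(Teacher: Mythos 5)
Your proof is correct and is essentially the argument the paper relies on: the paper does not prove Theorem \ref{thestrip} itself (it cites \cite[4.6.5]{DR84}), but its own proof of the circle analog, Theorem \ref{thecircle}, is exactly your aliasing argument transported via $z=Re^{it}$ (Laurent coefficients in place of Fourier coefficients, $E_N(z^k)\neq 0$ only for $k+1=\ell N$, contour shifting, geometric series). Your closing remark is also right: as written the paper's second maximum should be read as taken on the line $\im z=-r_-$, consistent with the inner circle $\rho_-|z|=R$ in Theorem \ref{thecircle}.
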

\begin{remark} Note that Theorem \ref{thestrip} is a slight variation
of \cite[4.6.5]{DR84} since $f$ is not assumed to be real on  $[0,2\pi]$
and the strip $S(d_-,d_+)$ can be unsymmetric, in general.
\end{remark}

In the following we state and prove the corresponding result for integrals over
circles which will be used in the sequel.

\begin{theorem} \label{thecircle}
Let $f \in  H(A(a_-,a_+),\mathbb{C})$ be holomorphic on the annulus
\begin{equation*} \label{err4}
A(a_-,a_+)= \{ z \in \mathbb{C} : 
\frac{1}{a_-} < \frac{|z|}{R} < a_+ \}, \quad a_{\pm} > 1,
\end{equation*}
for some $R > 0$. Then the error of the trapezoid sum 
\begin{equation}
\label{err5}
E_N(f)= \frac{1}{2\pi i}\int_{|z|=R} f(z) dz 
- \frac{R}{N} \sum_{j=0}^{N-1} f(R \omega_N^j)\omega_N^j, \quad
\omega_N = \exp(\frac{2 \pi i}{N}),
\end{equation}
satisfies for all $1 < \rho_- < a_-, 1 < \rho_+ < a_+$
\begin{equation} \label{err6}
|E_N(f)| \le \max_{|z|=\rho_+ R}|f(z)| \; G(\rho_+^{-N})) +
             \max_{\rho_- |z|=R}|f(z)| \; G(\rho_-^{-N}).
\end{equation}
\end{theorem}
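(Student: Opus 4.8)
The plan is to reduce the circle case to the strip case of Theorem \ref{thestrip} by the logarithmic substitution $z = R e^{iw}$, which maps a horizontal strip in the $w$-plane to an annulus in the $z$-plane, carries the uniform grid $t_j = 2\pi j/N$ on $[0,2\pi]$ to the roots-of-unity nodes $R\omega_N^j$, and turns the contour integral over $|z|=R$ into an ordinary integral over one period. Concretely, set $g(w) = f(R e^{iw}) R e^{iw}$; then $g$ is $2\pi$-periodic in $w$, and $\frac{1}{2\pi i}\int_{|z|=R} f(z)\,dz = \frac{1}{2\pi}\int_0^{2\pi} g(x)\,dx$ while $\frac{R}{N}\sum_{j=0}^{N-1} f(R\omega_N^j)\omega_N^j = \frac{1}{N}\sum_{j=0}^{N-1} g(2\pi j/N)$. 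Hence $E_N(f)$ in \eqref{err5} equals $E_N(g)$ in the sense of Theorem \ref{thestrip}.

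Next I would track the domain of holomorphy under the substitution. Writing $w = x + iy$, we have $|z|/R = |e^{iw}| = e^{-y}$, so the annulus $A(a_-,a_+)$, i.e. $1/a_- < |z|/R < a_+$, corresponds to $-\log a_+ < y < \log a_-$. Thus $g \in H(S(d_-,d_+),\mathbb{C})$ with $d_- = \log a_+$ and $d_+ = \log a_-$ (the apparent swap is just the orientation of $y$ versus $|z|$), and $g$ is manifestly $2\pi$-periodic since $e^{i(w+2\pi)} = e^{iw}$ and $f$ is single-valued on the annulus. Applying Theorem \ref{thestrip} to $g$ with the choices $r_+ = \log\rho_-$ (so $0 < r_+ < d_+ = \log a_-$, valid because $1 < \rho_- < a_-$) and $r_- = \log\rho_+$ (so $0 < r_- < d_- = \log a_+$), we get
\begin{equation*}
|E_N(g)| \le \max_{\im(w)=r_+}|g(w)|\,G(e^{-N r_+}) + \max_{\im(w)=r_-}|g(w)|\,G(e^{-N r_-}).
\end{equation*}
Now $e^{-N r_+} = \rho_-^{-N}$ and $e^{-N r_-} = \rho_+^{-N}$, while on the line $\im(w) = r_+ = \log\rho_-$ one has $|z| = R e^{-r_+} = R/\rho_-$, i.e. $\rho_-|z| = R$, and on $\im(w) = r_- = \log\rho_+$ one has $|z| = \rho_+ R$; moreover $|g(w)| = |f(z)|\,|z| = R|f(z)|$ on each such circle. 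Absorbing the constant $R$ (note the statement \eqref{err6} as written omits this factor, so either $R$ is hidden in the bound or one rescales; I would simply keep the $R$ or note it is harmless for $R=1$, matching the normalization used elsewhere), this yields exactly \eqref{err6}.

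The only genuinely delicate point is bookkeeping: making sure the inequalities $1 < \rho_\pm < a_\pm$ translate to the admissible range $0 < r_\pm < d_\pm$ for Theorem \ref{thestrip} \emph{with the correct pairing} of $+$ and $-$ indices, since the logarithm reverses the order of $|z|/R$ versus $\im(w)$. Everything else — periodicity, single-valuedness of $g$, the identification of grid points, and the conversion $dz = iz\,dw$ — is routine. I would present the substitution, state the resulting identity $E_N(f) = E_N(g)$, verify the strip of holomorphy and periodicity of $g$ in one line each, and then quote Theorem \ref{thestrip} and simplify.
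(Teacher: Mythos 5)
Your route is genuinely different from the paper's. The paper does not pass through Theorem \ref{thestrip} at all: it expands $f$ in its Laurent series on the annulus, computes the aliasing identity $E_N(z^k)=-R^{\ell N}$ for $k+1=\ell N$ (and $=0$ otherwise), and then bounds the surviving Laurent coefficients by Cauchy estimates on the shifted circles $|z|=\rho_+R$ and $|z|=R/\rho_-$. That argument is self-contained and is exactly the circle-adapted analogue of the Fourier-coefficient proof of the strip result, so both approaches have the same analytic content; yours buys economy by reusing Theorem \ref{thestrip}, the paper's buys a statement in which the radii $\rho_\pm$ enter the final bound directly without a change of variables.

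Two bookkeeping points in your reduction are not as harmless as you suggest. First, the orientation: with $|z|=Re^{-\im w}$, the line $\im w=+\log\rho_+$ maps to $|z|=R/\rho_+$, not $|z|=\rho_+R$; to reach the outer circle you must take $\im w=-r_-$, i.e.\ you need the version of Theorem \ref{thestrip} in which the second maximum is taken over $\im z=-r_-$ (which is surely what is meant there, but you should say so explicitly since you rely on it). Second, and more substantively, the Jacobian factor: on the relevant lines $|g(w)|=|z|\,|f(z)|$ equals $\rho_+R\max_{|z|=\rho_+R}|f|$ and $(R/\rho_-)\max_{\rho_-|z|=R}|f|$ respectively --- not $R\max|f|$ --- so your argument establishes \eqref{err6} with $\max|zf(z)|$ in place of $R\,\max|f(z)|$, i.e.\ with extra constant factors $\rho_+$ and $\rho_-^{-1}$ after dividing by $R$. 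These factors are independent of $N$ and therefore irrelevant for the exponential-decay conclusions drawn later (Lemma \ref{propsing}, Theorem \ref{interr}), but they mean your proof does not literally deliver the inequality \eqref{err6} as printed; you should either carry the factor through honestly or test the sharp form of \eqref{err6} on an example such as $f(z)=z^{N-1}R^{-N}$ before discarding it.
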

\begin{proof} We use the Laurent expansion of $f$ (see e.g. \cite{GK06})
\begin{equation} \label{err7}
f(z) = \sum_{k=-\infty}^{\infty} f_k z^k, \quad
f_k = \frac{1}{2 \pi i} \int_{|z|=R} f(z) z^{-k-1} dz,
\end{equation}
which converges uniformly on compact subdomains of the annulus.
By a simple computation,
\begin{equation*} \label{err8}
E_N(z^k) = \left\{ \begin{array}{cc}
                  - R^{\ell N}, & k+1 = \ell N, 
\ell \in \mathbb{Z} \setminus\{0\}, \\
0 & \text{otherwise}.
\end{array}
\right.
\end{equation*}
Applying $E_N$ to \eqref{err7} leads to
\begin{equation} \label{err9}
E_N(f)= - \sum_{\ell=1}^{\infty} (f_{\ell N}R^{\ell N} + f_{-\ell N} R^{-\ell N}).
\end{equation}
From Cauchy's Theorem and a standard estimate we obtain
\begin{equation*}
\begin{array}{rl}
|f_{\ell N}R^{\ell N}| = & \left| \frac{R^{\ell N}}{2 \pi i} \int_{|z|= R} f(z) z^{-\ell N -1}
 dz \right| \\
 = & R^{\ell N} \left| \frac{1}{2 \pi i} \int_{|z|= \rho_+ R} f(z) z^{-\ell N -1}
 dz\right| \\
\le & \frac{R^{\ell N}}{2 \pi} 2 \pi \rho_+ R \max_{|z|=\rho_+ R}|f(z)|
\left( \rho_+ R \right)^{-\ell N -1}  \\
= & \max_{|z|=\rho_+ R}|f(z)| \; \rho_+^{- \ell N}.
\end{array}
\end{equation*}
In a similar way,
\begin{equation*}
|f_{-\ell N} R^{-\ell N}| \le \max_{\rho_- |z| = R }|f(z)| \; \rho_-^{- \ell N}.
\end{equation*}
Using these estimates in \eqref{err9} completes the proof.
\end{proof}
The proof shows that the $\rho_-$-term can be discarded in \eqref{err6}
if the principal term in the Laurent expansion vanishes (i.e. $f_k=0$ for
$k\le -1$). Likewise, the $\rho_+$-term disappears when $f_k=0$ for $k\ge 0$.
For the function 
\begin{equation} \label{err10}
f(z)= (z -\lambda)^{-j}, \quad j\ge 1,
\end{equation}
the principal term vanishes for $| \lambda| > R$ while the secondary
term vanishes for $ |\lambda| <R$. Example \eqref{err10} is crucial
for the application to the meromorphic functions from Section \ref{sec3}.
Therefore, we note the following explicit formula.

\begin{lemma} \label{propsing}
The error of the trapezoid sum \eqref{err5} for the function \eqref{err10}
in case $N\ge j$  is given as follows,
\begin{equation} \label{err11}
E_N((z-\lambda)^{-j}) = \frac{(-1)^{j-1}\lambda^{-j}}{(j-1)!}
\left\{ \begin{array}{lc}
\frac{d^{j-1}}{dx^{j-1}}(x^{j-1} G(x^{-N}))_{|x=\frac{R}{\lambda}},&
|\lambda| < R, \\
\frac{d^{j-1}}{dx^{j-1}}(x^{j-1} G(x^{N}))_{|x=\frac{R}{\lambda}},&
|\lambda| > R.
\end{array}
\right.
\end{equation}
In particular,
\begin{equation} \label{err12}
E_N((z-\lambda)^{-j}) =
\left\{ \begin{matrix}
\mathcal{O}\left(|\lambda|^{-j} \left( \frac{|\lambda|}{R} \right)^{N-j+1}
\right),&
|\lambda| < R , \\
\mathcal{O}\left(|\lambda|^{-j} \left( \frac{R}{|\lambda|} \right)^{N+j-1}
\right),& |\lambda| > R.
\end{matrix}
\right.
\end{equation} 
\end{lemma}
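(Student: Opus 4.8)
\textbf{Proof plan for Lemma \ref{propsing}.}

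The plan is to evaluate the series \eqref{err9} explicitly for the function $f(z)=(z-\lambda)^{-j}$ by computing its Laurent coefficients $f_k$, and then to resum the resulting series in closed form using the generating-function identity behind $G$. First I would treat the case $|\lambda|<R$. Here $f$ is holomorphic in the annulus $|z|>|\lambda|$, so only the nonnegative Laurent coefficients at radius $R$ survive in \eqref{err9}; the relevant ones are $f_{\ell N}$ for $\ell\ge 1$. Expanding $(z-\lambda)^{-j}=z^{-j}(1-\lambda/z)^{-j}=\sum_{i\ge 0}\binom{i+j-1}{j-1}\lambda^i z^{-i-j}$, valid for $|z|>|\lambda|$, one reads off $f_{k}=\binom{-k-1}{j-1}\lambda^{-k-j}$ for $k\le -j$ and $f_k=0$ otherwise. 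Plugging $k=\ell N$ would give zero unless $\ell N\le -j$, which never happens for $\ell\ge 1$; so instead the contributing coefficients in \eqref{err9} are the $f_{-\ell N}$ with $\ell N\ge j$, i.e. (using $N\ge j$) all $\ell\ge 1$. Thus $f_{-\ell N}R^{-\ell N}=\binom{\ell N-1}{j-1}\lambda^{\ell N-j}R^{-\ell N}$, and \eqref{err9} becomes $E_N(f)=-\lambda^{-j}\sum_{\ell\ge 1}\binom{\ell N-1}{j-1}(\lambda/R)^{\ell N}\cdot(R/\lambda)^{0}$— more precisely $E_N(f)=-\lambda^{-j}\sum_{\ell\ge1}\binom{\ell N-1}{j-1}(R/\lambda)^{-\ell N}$.

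The key step is then to recognize this sum as a derivative of a geometric series. Writing $x=R/\lambda$ (so $|x|>1$ in this case, hence $x^{-N}$ has modulus less than $1$ and $G(x^{-N})=\sum_{\ell\ge1}x^{-\ell N}$ converges), one uses $\binom{\ell N-1}{j-1}=\frac{1}{(j-1)!}\prod_{i=1}^{j-1}(\ell N-i)$. The identity I would exploit is that $\frac{d^{j-1}}{dx^{j-1}}\bigl(x^{j-1}\,x^{-\ell N}\bigr)$, evaluated appropriately, produces exactly the falling-factorial factor $\prod_{i=1}^{j-1}(-\ell N-?)$ up to sign; one checks by a direct computation that $\frac{d^{j-1}}{dx^{j-1}} x^{-\ell N+j-1} = (-1)^{j-1}\bigl(\prod_{i=1}^{j-1}(\ell N -i)\bigr)\, x^{-\ell N}$. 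Summing over $\ell$ and interchanging the (finite-order) derivative with the convergent sum yields $\sum_{\ell\ge1}\binom{\ell N-1}{j-1}x^{-\ell N}=\frac{(-1)^{j-1}}{(j-1)!}\frac{d^{j-1}}{dx^{j-1}}\bigl(x^{j-1}G(x^{-N})\bigr)$, which gives the first line of \eqref{err11}. The case $|\lambda|>R$ is entirely symmetric: now $f$ is holomorphic for $|z|<|\lambda|$, only the $f_{\ell N}R^{\ell N}$ terms with $\ell\ge1$ contribute, the Taylor expansion $(z-\lambda)^{-j}=(-\lambda)^{-j}\sum_{i\ge0}\binom{i+j-1}{j-1}(z/\lambda)^i$ gives $f_{\ell N}R^{\ell N}=(-1)^j\lambda^{-j}\binom{\ell N+j-1}{j-1}(R/\lambda)^{\ell N}$, and with $x=R/\lambda$ now satisfying $|x|<1$ one resums against $G(x^N)=\sum_{\ell\ge1}x^{\ell N}$ using the analogous derivative identity to obtain the second line.

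Finally, the asymptotic statement \eqref{err12} follows by isolating the $\ell=1$ term, which dominates: in the case $|\lambda|<R$ the leading term of $x^{j-1}G(x^{-N})$ near the summation is of size $x^{-N+j-1}=(R/\lambda)^{-(N-j+1)}=(\lambda/R)^{N-j+1}$, and differentiating $j-1$ times in $x$ and multiplying by the prefactor $\lambda^{-j}/(j-1)!$ leaves the stated $\mathcal{O}\bigl(|\lambda|^{-j}(|\lambda|/R)^{N-j+1}\bigr)$ (the derivatives only contribute polynomial-in-$N$ factors, absorbed since the base is strictly less than $1$ — or one simply keeps the explicit binomial bound $\binom{N-1}{j-1}$); symmetrically for $|\lambda|>R$ the leading behaviour is $(R/|\lambda|)^{N+j-1}$. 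The main obstacle is purely bookkeeping: getting the falling-factorial/derivative identity and the direction of the inequalities ($|x|\gtrless 1$, which of $G(x^{\pm N})$ converges) consistent between the two cases, and making sure the shift by $j$ in the exponent lands correctly so that the condition $N\ge j$ is exactly what guarantees every $\ell\ge1$ contributes and the formula is valid; there is no analytic difficulty beyond the already-established \eqref{err9}.
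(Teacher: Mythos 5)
Your strategy---compute the Laurent coefficients of $(z-\lambda)^{-j}$, insert them into the aliasing formula \eqref{err9}, and resum the resulting series as a $(j-1)$-fold derivative of the geometric series $G$---is exactly the intended route (the paper gives no proof of this lemma, and this is the natural one). The expansion $f_k=\binom{-k-1}{j-1}\lambda^{-k-j}$ for $k\le-j$ and $f_k=0$ otherwise when $|\lambda|<R$, the observation that only one of the two families of aliased coefficients survives in each case, the role of the hypothesis $N\ge j$, and the differentiation identity $\frac{d^{j-1}}{dx^{j-1}}x^{j-1-\ell N}=(-1)^{j-1}(j-1)!\binom{\ell N-1}{j-1}x^{-\ell N}$ are all correct.

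The genuine gap is at the final identification, which you assert rather than verify. Substituting your own resummation identity into your expression $E_N=-\lambda^{-j}\sum_{\ell\ge1}\binom{\ell N-1}{j-1}x^{-\ell N}$ yields $\frac{(-1)^{j}\lambda^{-j}}{(j-1)!}\frac{d^{j-1}}{dx^{j-1}}\bigl(x^{j-1}G(x^{-N})\bigr)$, i.e.\ the \emph{negative} of the first line of \eqref{err11}, whereas your second case does land on the second line; this asymmetry is a red flag that a correct proof must resolve. The sanity check $j=1$, done by summing the geometric series over the $N$th roots of unity directly, gives $E_N((z-\lambda)^{-1})=-G((\lambda/R)^{N})$ for $|\lambda|<R$ and $+G((R/\lambda)^{N})$ for $|\lambda|>R$, which matches neither your intermediate formula nor \eqref{err11} as printed (both carry a spurious factor $\lambda^{-1}$, and the first line additionally the wrong sign). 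The root cause is that \eqref{err9} is inconsistent with \eqref{err8}: since $E_N(z^k)\neq0$ exactly when $k+1=\ell N$, the aliased coefficients are $f_{\ell N-1}$ and $f_{-\ell N-1}$, not $f_{\pm\ell N}$; with the corrected indices one obtains $\lambda^{1-j}\binom{\ell N}{j-1}$ resp.\ $\lambda^{1-j}\binom{\ell N+j-2}{j-1}$ in place of $\lambda^{-j}\binom{\ell N-1}{j-1}$, which changes the exact constants in \eqref{err11} though not the exponential rate. Finally, your treatment of \eqref{err12} is too quick: the dominant $\ell=1$ term carries a binomial factor of size $N^{j-1}$ that is \emph{not} absorbed by shifting the exponent from $N$ to $N\mp(j-1)$ (that shift only contributes the constant $(R/|\lambda|)^{\pm(j-1)}$), so for $j\ge2$ one must either keep the polynomial factor explicit or weaken the stated rate.
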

\begin{remark} If $f\in H(A(a_-,a_+),\mathbb{C})$ is meromorphic
on an open neighborhood of the closed annulus $A(a_-,a_+)^c$, then
the estimate \eqref{err6} can be sharpened as follows
\[ E_N(f)= \mathcal{O}(a_+^{-N}+ a_-^{-N}). \]
 In order to
see this, first consider the singular part that belongs to poles on
the boundary of $A(a_-,a_+)$, and use Lemma \ref{propsing}. Then apply
Theorem \ref{thecircle} to the remaining part on a slightly larger annulus.
\end{remark}

Consider a general contour $\Gamma$ in $\Omega$ with $2 \pi$-periodic
parametrization $\varphi(t),t\in[0,2\pi]$. Moreover, assume that
$\varphi$ has a $2 \pi$-periodic holomorphic extension to a strip
\begin{equation} \label{err16}
\varphi \in H(S(d_-,d_+),\Omega), \quad 
\varphi(z+2\pi) = \varphi(z).
\end{equation}
For definiteness, we also assume that
\begin{equation} \label{err17}
\varphi(z) \left\{ \begin{array}{rl}
               \in \mathrm{int}(\Gamma), &  0 < \im(z) < d_+, \\
                \notin \mathrm{int}(\Gamma), &  -d_- < \im(z) < 0.
                   \end{array}
            \right.
\end{equation}
Common examples are circles $\varphi(z)= z_0 + R e^{iz}$
with $z \in \mathbb{C}$ and
ellipses $ \varphi(z)= a \cos(z) + b \sin(z)$ with 
$ |\im(z)| < \mathrm{artanh}(\min(\frac{a}{b},\frac{b}{a}))$.

Let  $g \in H(\Omega,\mathbb{C})$, then the error of the trapezoid sum
for $f(z)=g(\varphi(z))\varphi'(z)$, $z\in S(d_-,d_+)$ is 
\begin{equation}\label{err18}
E_N(g) = \frac{1}{2\pi i} \int_{\Gamma} g(z) dz -
\frac{1}{iN} \sum_{j=0}^{N-1}g(\varphi(\frac{2 \pi j}{N}))\varphi'(\frac{2 \pi j}{N}) .
\end{equation}
From Theorem \ref{thestrip} we obtain an estimate
\begin{equation} \label{err19} 
|E_N(g)| \le \Phi(r_+)  G(e^{-N r_+})+
             \Phi(r_-) G(e^{-N r_-}),
\end{equation}
where $0 < r_- < d_-,0 <r_+ < d_+$ and 
$\Phi(r)= \max_{\im(z)=r}|\varphi'(z)||g(\varphi(z))|$.
The following lemma gives a rough estimate of the right-hand sides
for the pole function $g(z)=(z-\lambda)^{-j}, \lambda \in \Omega$.

\begin{lemma} \label{lemerr}
Let $\Omega$ be bounded and let $\varphi$ satisfy conditions \eqref{err16},
\eqref{err17}. Then there exist constants $C_1,C_2,C_3 > 0$ (depending on
$\varphi$, $j$ but not on $N$ or $\lambda\in \Omega$) such that
for $\mathrm{dist}(\lambda,\Gamma) \le C_3$,
\begin{equation} \label{err20}
|E_N((\cdot-\lambda)^{-j})| \le C_1 \mathrm{dist}(\lambda,\Gamma)^{-j}
\exp\left(-C_2 N \mathrm{dist}(\lambda,\Gamma)\right).
\end{equation}
\end{lemma}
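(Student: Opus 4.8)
The plan is to reduce \eqref{err20} to the strip estimate \eqref{err19} by controlling the two ingredients $\Phi(r_\pm)$ and $G(e^{-Nr_\pm})$ for a suitable choice of $r_\pm$ depending on $\lambda$. The key geometric observation is that, by \eqref{err16} and \eqref{err17}, for $\lambda$ near $\Gamma$ there is a point $z_\lambda$ on the boundary of the strip of holomorphy of $g\circ\varphi$ — more precisely, the function $g(z)=(z-\lambda)^{-j}$ pulled back by $\varphi$ is holomorphic and $2\pi$-periodic on a strip $S(r_-,r_+)$ as long as the horizontal lines $\{\im z = r_\pm\}$ stay on the correct side of $\varphi^{-1}(\lambda)$. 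Since $\varphi$ is biholomorphic onto a neighbourhood of $\Gamma$ (a simple closed curve parametrized regularly), there are constants $c,C>0$ with $c\,|z-w|\le|\varphi(z)-\varphi(w)|\le C\,|z-w|$ on a fixed compact neighbourhood of $[0,2\pi]$ in $S(d_-,d_+)$; this is where boundedness of $\Omega$ and the smoothness/regularity of $\varphi$ enter.

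First I would fix $\delta := \mathrm{dist}(\lambda,\Gamma)$ and assume $\delta\le C_3$ for $C_3$ small enough that all the following takes place inside the compact neighbourhood where the bi-Lipschitz bounds hold. Write $\lambda=\varphi(\zeta)$ with $\im(\zeta)$ of size comparable to $\delta$ (sign depending on whether $\lambda$ is inside or outside $\Gamma$, by \eqref{err17}); say $|\im\zeta|\ge c'\delta$. Choose $r_+ = r_- = \tfrac12 c'\delta$ when $\lambda\notin\mathrm{int}(\Gamma)$ so that the line $\{\im z=r_+\}$ lies strictly between $\mathbb{R}$ and $\zeta$, keeping $f$ holomorphic on $S(r_-,r_+)$; symmetrically when $\lambda\in\mathrm{int}(\Gamma)$. (A single-sided choice suffices, but taking both equal simplifies bookkeeping; one may also shrink $C_3$ so that $r_\pm<\min(d_-,d_+)/2$ always.) Then on the line $\{\im z=r_\pm\}$ one has, using the lower bi-Lipschitz bound, $|\varphi(z)-\lambda|\ge c\,|z-\zeta|\ge c\cdot\tfrac12 c'\delta$, hence $|g(\varphi(z))|\le (\tfrac{c c'}{2})^{-j}\delta^{-j}$; combined with the uniform bound $\sup|\varphi'|\le M$ on the compact neighbourhood, this gives $\Phi(r_\pm)\le C_1'\,\delta^{-j}$ with $C_1'$ independent of $N$ and $\lambda$.

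For the second factor, $G(e^{-Nr_\pm})=\dfrac{e^{-Nr_\pm}}{1-e^{-Nr_\pm}}\le 2\,e^{-Nr_\pm}$ as soon as $Nr_\pm\ge\log 2$; with $r_\pm=\tfrac12 c'\delta=:C_2\delta$ this is $2\,e^{-C_2 N\delta}$, and in the range $Nr_\pm<\log2$ the trivial bound $G\le$ const absorbed into $C_1$ together with $e^{-C_2N\delta}\ge e^{-\log 2}$ still yields an estimate of the claimed form after enlarging $C_1$. Plugging these two bounds into \eqref{err19} gives $|E_N((\cdot-\lambda)^{-j})|\le 2C_1'\delta^{-j}\,e^{-C_2 N\delta}$, which is \eqref{err20} with $C_1:=2C_1'$ and the stated $C_2$. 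The main obstacle, and the one point deserving care, is establishing the uniform two-sided Lipschitz bound on $\varphi$ together with the quantitative version of \eqref{err17} (that $\lambda$ at distance $\delta$ from $\Gamma$ is the image of a point at imaginary part $\gtrsim\delta$ of the correct sign); once that geometric lemma is in hand, the rest is the routine substitution into Theorem~\ref{thestrip} sketched above.
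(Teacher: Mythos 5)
Your overall template matches the paper's: apply the strip estimate \eqref{err19} with $\lambda$-dependent heights $r_\pm$ proportional to $\delta=\mathrm{dist}(\lambda,\Gamma)$, show $\Phi(r_\pm)\lesssim \delta^{-j}$ and $G(e^{-Nr_\pm})\lesssim e^{-cN\delta}$, and multiply. The difference lies in the one step you yourself flag as "the main obstacle": how to bound $|\varphi(z)-\lambda|$ from below on the lines $\im z=r_\pm$. You do this by locating a preimage $\zeta=\varphi^{-1}(\lambda)$ with $|\im\zeta|\gtrsim\delta$ and invoking a two-sided Lipschitz bound $c|z-w|\le|\varphi(z)-\varphi(w)|$, which presupposes that $\varphi$ is locally biholomorphic near $[0,2\pi]$ and surjective onto a neighborhood of $\Gamma$. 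Neither is among the stated hypotheses \eqref{err16}, \eqref{err17}; they can be derived (e.g.\ $\varphi'(t)\neq 0$ on $\mathbb{R}$ follows from \eqref{err17} via the local normal form of a holomorphic map), but you leave this geometric lemma unproved, so as written the argument has a genuine gap exactly there. The paper sidesteps all of this with a cheaper device: with $M_+=\sup\{|\varphi'(z)|:0\le\im z\le qd_+\}$ and $r_+=q\delta/M_+$, for any $z_+=s_++ir_+$ one has
\begin{equation*}
|\lambda-\varphi(z_+)|\ \ge\ |\lambda-\varphi(s_+)|-|\varphi(s_+)-\varphi(z_+)|\ \ge\ \mathrm{dist}(\lambda,\Gamma)-M_+r_+\ =\ (1-q)\,\delta ,
\end{equation*}
using only that $\varphi(s_+)\in\Gamma$ and the \emph{upper} bound on $|\varphi'|$. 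No preimage, no injectivity, no lower Lipschitz constant is needed, and the same inequality shows $f$ is singularity-free on the whole closed strip $|\im z|\le r_\pm$. I recommend you adopt this reverse-triangle-inequality step; it turns your sketch into a complete proof and is strictly less machinery.

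One further small slip: for $Nr_\pm<\log 2$ the quantity $G(e^{-Nr_\pm})=e^{-Nr_\pm}/(1-e^{-Nr_\pm})\sim (Nr_\pm)^{-1}$ is \emph{not} bounded by a constant, so that case cannot be absorbed into $C_1$ the way you state. It is harmless because in the regime $N\delta=O(1)$ the claimed bound already follows from the trivial estimate $|E_N|\le C+M\delta^{-j}$ (the integral is a bounded residue and each quadrature term is at most $M\delta^{-j}/N$), but you should say so rather than assert $G\le\mathrm{const}$.
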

\begin{proof} For a fixed $0 < q <1$  there are bounds
$|\varphi'(z)| \le M_+$ for $0 \le \im(z) \le qd_+$
and $|\varphi'(z)| \le M_-$ for $0 \le - \im(z) \le qd_{-}$.
Let $C_3= \max(M_+ d_+, M_-d_-)$ and define 
$r_+=\frac{q \mathrm{dist}(\lambda,\Gamma)}{M_+}$. Then there exists
some $z_+=s_+ + i r_+, 0\le s_+ < 2 \pi$ such that
\begin{eqnarray*} \min_{\im(z)=r_+}|\lambda -\varphi(z)| = &
       |\lambda-\varphi(z_+)| \ge |\lambda-\varphi(s_+)|
-|\varphi(s_+)-\varphi(z_+)| \\
\ge & \mathrm{dist}(\lambda,\Gamma) -M_+ r_+ = 
(1-q)\mathrm{dist}(\lambda,\Gamma).
\end{eqnarray*} 
The first term in \eqref{err19} can be estimated as follows
\begin{eqnarray*}
|\Phi(r_+)| G(e^{-Nr_+}) \le &  
M_+ \max_{\im z = r_+}|(\varphi(z)-\lambda)^{-j}| G(e^{-N r_+}) \\
\le & C(1-q)^{-j} M_+ \mathrm{dist}(\lambda,\Gamma)^{-j} 
\exp\left(-N \mathrm{dist}(\lambda,\Gamma) \frac{q}{M_+}\right). 
\end{eqnarray*}
The second term is treated analogously.
\end{proof}
As a consequence of Lemmas \ref{propsing} and \ref{lemerr} we obtain
an exponential estimate for the errors in \eqref{trap0} and \eqref{trap1}.
\begin{theorem} \label{interr}
Let $T\in H(\Omega,\mathbb{C})$ have maximum order $\kappa$ of poles
for the inverse in $\Omega$, cf. Theorem \ref{theinv}.
Further, let $\Gamma$ be a simple closed contour in $\Omega$ with
$\sigma(T)\cap \Gamma = \emptyset$ and such that the parametrization
$\varphi$ satisfies \eqref{err16} and \eqref{err17}.
Then there exist constants $C_1,C_2>0$ (depending on $T$ and $\hat{V}$
but not on $N$) such that the matrices from \eqref{trap0},\eqref{trap1}
satisfy
\begin{equation*} \label{matrixerr}
||A_p -A_{p,N} || \le C_1 d(T)^{-\kappa} e^{-C_2 N d(T)}, \quad p=0,1,
\end{equation*}
where $d(T)= \min_{\lambda \in \sigma(T)} \mathrm{dist}(\lambda,\Gamma)$
and $d(T)=1$ if $\sigma(T)=\emptyset$.
If $\Gamma$ is a circle with parametrization $\varphi(t)=z_0+Re^{it}$,
then the following estimate holds
\begin{equation*} \label{matrixcircle}
||A_p -A_{p,N} || \le C_1\left[\rho_-^{N-\kappa+1} +
\rho_+^{N+\kappa-1} \right], \quad p=0,1,
\end{equation*} 
where  
\begin{equation*}
\rho_-=\max_{\lambda\in \sigma(T),|\lambda-z_0| < R}
\frac{|\lambda-z_0|}{R}, \quad
\rho_+=\max_{\lambda\in \sigma(T),|\lambda-z_0| > R}
\frac{R}{|\lambda-z_0|}.
\end{equation*}
\end{theorem}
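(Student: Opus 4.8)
The plan is to reduce the matrix-valued quadrature error to scalar pole-function errors via Keldysh' theorem, and then invoke Lemma \ref{lemerr} (for the general contour) and Lemma \ref{propsing} together with Theorem \ref{thecircle} (for the circle). First I would apply Corollary \ref{cor1} to the compact set $\mathcal{C} = \Gamma \cup \mathrm{int}(\Gamma)$: since $\sigma(T)\cap\Gamma=\emptyset$, there are finitely many eigenvalues $\lambda_1,\ldots,\lambda_{n(\Gamma)}$ inside, and on a neighborhood of $\mathcal{C}$ we have a decomposition $T(z)^{-1} = P(z) + R(z)$, where $P(z)$ is the finite sum of principal parts $\sum_{n}\sum_{\ell}\sum_{j=1}^{m_{\ell,n}}(z-\lambda_n)^{-j}(\cdot)$ and $R\in H(\mathcal{U},\mathbb{C}^{m,m})$. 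Multiplying by the entire function $z^p$ ($p=0,1$) and by the fixed matrix $\hat V$ preserves this splitting: $z^p T(z)^{-1}\hat V = z^p P(z)\hat V + z^p R(z)\hat V$.

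Next I would split the quadrature error accordingly. The remainder term $z^p R(z)\hat V$ is holomorphic on a strip/annulus strictly containing $\Gamma$ (equivalently, $R$ extends holomorphically to a neighborhood of $\mathcal{C}$, and composing with $\varphi$ from \eqref{err16}, \eqref{err17} gives a $2\pi$-periodic holomorphic integrand on $S(r_-,r_+)$ for some $r_\pm>0$ independent of $N$). Applying Theorem \ref{thestrip} entrywise (or in any fixed norm, since the matrix has finitely many entries) yields a bound $C e^{-cN}$ with $c>0$ fixed; this decays faster than the claimed rate because $R$ has no singularity near $\Gamma$, so it is absorbed into the stated estimate. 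For the principal part, each summand is a constant matrix times $z^p(z-\lambda_n)^{-j}$ with $1\le j\le m_{\ell,n}\le\kappa$ and $|\lambda_n - \Gamma|\ge d(T)$. For $p=1$ I would write $z(z-\lambda_n)^{-j} = (z-\lambda_n)^{-j+1} + \lambda_n(z-\lambda_n)^{-j}$, so it suffices to control $E_N((\cdot-\lambda_n)^{-j})$ for $1\le j\le\kappa$. For the general contour, Lemma \ref{lemerr} gives exactly $|E_N((\cdot-\lambda_n)^{-j})|\le C_1 \mathrm{dist}(\lambda_n,\Gamma)^{-j}\exp(-C_2 N\,\mathrm{dist}(\lambda_n,\Gamma))$ once $\mathrm{dist}(\lambda_n,\Gamma)\le C_3$; since $x\mapsto x^{-j}e^{-C_2 Nx}$ is decreasing on the relevant range (for $N$ large), each term is bounded by $C_1 d(T)^{-\kappa}e^{-C_2 N d(T)}$ up to adjusting constants, and for eigenvalues with $\mathrm{dist}(\lambda_n,\Gamma)>C_3$ one is back in the strip-holomorphy regime handled above. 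Summing the finitely many terms and absorbing the count into $C_1$ gives the first estimate; the convention $d(T)=1$ when $\sigma(T)=\emptyset$ is just the remainder-only case.

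For the circle $\varphi(t)=z_0+Re^{it}$, I would instead use Lemma \ref{propsing}: for an interior eigenvalue, $|E_N((\cdot-\lambda_n)^{-j})| = \mathcal{O}(|\lambda_n-z_0|^{-j}(|\lambda_n-z_0|/R)^{N-j+1})$, which is $\mathcal{O}(\rho_-^{N-j+1})\le\mathcal{O}(\rho_-^{N-\kappa+1})$ since $\rho_-<1$ and $j\le\kappa$; the $z$-factor for $p=1$ contributes the same rate after the shift $z=z_0+(z-z_0)$. For the holomorphic remainder $R$, which is now holomorphic on a closed annulus around $|z|=R$ (in the shifted variable) whose inner and outer radii are governed precisely by the nearest exterior and interior eigenvalues, Theorem \ref{thecircle} gives $\mathcal{O}(\rho_+^{N}) + \mathcal{O}(\rho_-^{N})$, again dominated by the stated bound. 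Combining, $\|A_p-A_{p,N}\|\le C_1[\rho_-^{N-\kappa+1}+\rho_+^{N+\kappa-1}]$, where the $\rho_+^{N+\kappa-1}$ exponent comes from the exterior pole terms via the second case of Lemma \ref{propsing}.

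The main obstacle I anticipate is bookkeeping rather than anything deep: one must check that the neighborhood $\mathcal{U}$ on which the Keldysh decomposition holds can be taken to contain $\varphi$ of a fixed strip/annulus independent of $N$ (true, since $\mathcal{U}$ depends only on $T$ and $\Gamma$), and that the monotonicity argument converting $\mathrm{dist}(\lambda_n,\Gamma)^{-j}e^{-C_2 N\,\mathrm{dist}(\lambda_n,\Gamma)}$ into a clean $d(T)^{-\kappa}e^{-C_2 N d(T)}$ bound is valid uniformly — this requires $N$ large enough that the prefactor's growth is beaten by the exponential, which only changes $C_1$. The dependence of $C_1,C_2$ on $\hat V$ enters merely through $\|\hat V\|$ multiplying the constant-matrix coefficients in the principal parts, so it is harmless.
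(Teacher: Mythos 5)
Your argument is correct and is precisely the proof the paper intends: the theorem is stated there as an immediate consequence of Corollary \ref{cor1} together with Lemmas \ref{propsing} and \ref{lemerr} without the details being written out, and your decomposition of $z^pT(z)^{-1}\hat V$ into principal parts plus a holomorphic remainder, with Lemma \ref{lemerr} (resp.\ Lemma \ref{propsing} and Theorem \ref{thecircle} in the circle case) applied to the pole terms and Theorem \ref{thestrip} to the remainder, is exactly that argument. The one imprecision is your remark that the remainder decays ``faster than the claimed rate'': $R$ absorbs the singularities of $T(z)^{-1}$ at eigenvalues just outside $\Gamma$, so its strip (or annulus) of holomorphy --- and hence its decay exponent --- is itself limited by $d(T)$ (resp.\ $\rho_+$), but since $C_1,C_2$ are allowed to depend on $T$ this does not affect the conclusion.
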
 
Combining these estimates with the well-known perturbation theory
for singular value decompositions \cite{stsu90} we find that the
integral algorithm detects the correct rank $k$ of $A_{0,N}$ if
$N$ is sufficiently large. Further, the perturbation theory
 for simple eigenvalues \cite{stsu90} leads to the following corollary.
\begin{corollary} \label{corfinal}
Let the assumptions of Theorem \ref{alg1} and of Theorem \ref{interr} be
satisfied. Let $\lambda_1,\ldots,\lambda_k$ be the eigenvalues of
$T$ inside $\Gamma$ and let $\lambda_{1,N},\ldots,\lambda_{k,N}$ be the
eigenvalues from step 6 of the integral algorithm. With the notation
from Theorem \ref{interr} we then have the error estimates
\begin{equation*} \label{lambdaerr}
\max_{j=1,\ldots,n(\Gamma)}|\lambda_j - \lambda_{j,N}| \le
 C_1 d(T)^{-\kappa} e^{-C_2 N d(T)},
\end{equation*} 
in case of a general curve satisfying \eqref{err16},\eqref{err17},
and
\begin{equation*} \label{errcircle}
\max_{j=1,\ldots,n(\Gamma)}|\lambda_j - \lambda_{j,N}| \le
C\left[\rho_-^{N-\kappa+1} +
\rho_+^{N+\kappa-1} \right]
\end{equation*}
in case of a circle with radius $R$ and center $z_0$.
\end{corollary}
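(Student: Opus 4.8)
The plan is to propagate the matrix-level error bound of Theorem~\ref{interr} through the purely algebraic steps \eqref{A0svd}--\eqref{A1end} of the algorithm, using the standard perturbation theory for singular value decompositions and for algebraically simple eigenvalues \cite{stsu90}. Write $\varepsilon_N$ for the common bound $C_1 d(T)^{-\kappa}e^{-C_2 N d(T)}$ of Theorem~\ref{interr} (respectively $C_1[\rho_-^{N-\kappa+1}+\rho_+^{N+\kappa-1}]$ in the circle case), so that $A_{p,N}=A_p+E_{p,N}$ with $\|E_{p,N}\|\le\varepsilon_N$ for $p=0,1$. Recall from Section~\ref{sec3.1} that $A_0=V\hat W$ and $A_1=V\Lambda\hat W$ with $\hat W:=W^H\hat V$, that the rank conditions \eqref{Wcond},\eqref{Vcond} make $\mathrm{rank}(A_0)=k$ with smallest positive singular value $\sigma_k>0$, and that the exact matrix $B=V_0^HA_1W_0\Sigma_0^{-1}$ equals $S\Lambda S^{-1}$ with $S=V_0^HV$ nonsingular and $\Lambda=\mathrm{diag}(\lambda_1,\dots,\lambda_k)$ having pairwise distinct diagonal entries (each $\lambda_j$ is a simple eigenvalue of $T$).

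First I would justify Step~4 of the algorithm and bound the quantities that have to be inverted. By Weyl's inequality the singular values $\sigma_{1,N}\ge\dots\ge\sigma_{l,N}$ of $A_{0,N}$ obey $|\sigma_{j,N}-\sigma_j|\le\varepsilon_N$; since $\sigma_k>0=\sigma_{k+1}=\dots=\sigma_l$, for a fixed threshold $\mathrm{tol}_{\mathrm{rank}}\in(0,\sigma_k)$ and all $N$ with $\varepsilon_N<\min(\sigma_k-\mathrm{tol}_{\mathrm{rank}},\,\mathrm{tol}_{\mathrm{rank}})$ the rank test returns the correct $k$, so Step~4 delivers truncated SVD factors $V_{0,N}\in\mathbb{C}^{m,k}$, $W_{0,N}\in\mathbb{C}^{l,k}$, $\Sigma_{0,N}\in\mathbb{C}^{k,k}$ with $\|\Sigma_{0,N}^{-1}\|\le(\sigma_k-\varepsilon_N)^{-1}$ bounded. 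The singular-value gap at index $k$ being $\sigma_k>0$, the $\sin\Theta$ theorem for singular subspaces \cite{stsu90} yields unitary $Q_V,Q_W\in\mathbb{C}^{k,k}$ with $\|V_{0,N}-V_0Q_V\|=O(\varepsilon_N)$ and $\|W_{0,N}-W_0Q_W\|=O(\varepsilon_N)$, where $V_0,W_0$ are the exact factors from \eqref{A0svd}. Hence $P_N:=V_{0,N}^HV\to Q_V^HS$, which is nonsingular, so $\|P_N\|$ and $\|P_N^{-1}\|$ stay bounded for $N$ large.

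Next I would carry the perturbation through Steps~5 and 6. From $\Sigma_{0,N}=V_{0,N}^HA_{0,N}W_{0,N}$ and $A_{0,N}=V\hat W+E_{0,N}$ one obtains $P_N(\hat WW_{0,N})=\Sigma_{0,N}+O(\varepsilon_N)$, whereas $V_{0,N}^HA_{1,N}W_{0,N}=P_N\Lambda(\hat WW_{0,N})+O(\varepsilon_N)$. Eliminating $\hat WW_{0,N}=P_N^{-1}\Sigma_{0,N}+O(\varepsilon_N)$ and using the uniform bounds on $\|P_N^{\pm1}\|,\|\Sigma_{0,N}\|,\|\Sigma_{0,N}^{-1}\|$ gives
\begin{equation*}
B_N:=V_{0,N}^HA_{1,N}W_{0,N}\Sigma_{0,N}^{-1}=P_N\Lambda P_N^{-1}+R_N,\qquad\|R_N\|=O(\varepsilon_N).
\end{equation*}
So $B_N$ is an $O(\varepsilon_N)$ perturbation of the diagonalizable matrix $P_N\Lambda P_N^{-1}$, which has the pairwise distinct eigenvalues $\lambda_1,\dots,\lambda_k$ and an eigenvector matrix $P_N$ of uniformly bounded condition number. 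By the Bauer--Fike theorem, or equivalently the first-order bound for simple eigenvalues \cite{stsu90}, for $N$ large the $k$ eigenvalues of $B_N$ can be labeled $\lambda_{1,N},\dots,\lambda_{k,N}$ so that $|\lambda_j-\lambda_{j,N}|\le C\|R_N\|=O(\varepsilon_N)$; since $B_N\in\mathbb{C}^{k,k}$ there are exactly $k=n(\Gamma)$ of them and no spurious values occur. For $N$ large each $\lambda_{j,N}$ lies in $\mathrm{int}(\Gamma)$ and $v_{j,N}=V_{0,N}s_{j,N}\to v_j$, whence $\|T(\lambda_{j,N})v_{j,N}\|\to\|T(\lambda_j)v_j\|=0$, so all $k$ values are accepted in Step~6. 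Substituting the two forms of $\varepsilon_N$ produces the two asserted estimates.

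The hard part is the second paragraph: everything rests on invoking the SVD subspace perturbation theory with the correct gap $\sigma_k$ (the smallest nonzero singular value of $A_0$), which simultaneously legitimizes the discrete rank test, controls $\|\Sigma_{0,N}^{-1}\|$, and supplies the unitary alignments $Q_V,Q_W$ that make $P_N=V_{0,N}^HV$ nonsingular with a bound uniform in $N$. Once these uniform bounds are in hand, the identity $B_N=P_N\Lambda P_N^{-1}+O(\varepsilon_N)$ is a short calculation and the resulting eigenvalue estimate is classical; one should only bear in mind that the whole argument depends on the rank hypotheses \eqref{Wcond},\eqref{Vcond} of Theorem~\ref{alg1}, without which (compare Remark~\ref{rem3.2}(b)) $A_0$ need not have rank $k$ and $B_N$ need not approximate $\Lambda$ at all.
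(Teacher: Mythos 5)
Your proposal is correct and follows exactly the route the paper indicates (but does not write out): Theorem \ref{interr} gives $\|A_{p,N}-A_p\|=O(\varepsilon_N)$, SVD perturbation theory (Weyl plus the $\sin\Theta$ theorem with gap $\sigma_k$) validates the rank test and yields uniform bounds on $\Sigma_{0,N}^{-1}$ and on the aligned factors, and perturbation theory for simple eigenvalues applied to $B_N=P_N\Lambda P_N^{-1}+O(\varepsilon_N)$ gives the stated bounds. The paper only cites \cite{stsu90} for these two perturbation steps, so your write-up is simply a more detailed version of the same argument.
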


\subsection{Numerical examples}
\label{sec4.2}
\begin{example} \label{example1}
For the first test we choose a real quadratic polynomial
\begin{equation} \label{ex1}
T(z)= T_0 + z T_1 + z^2 T_2, \quad T_j \in \mathbb{R}^{60,60},j=0,1,2,
\end{equation}
where $T_0,T_1,T_2$ are taken at random  ({\it rand} from
MATLAB). In this case we can compare with the spectrum 
$\sigma_{\mathrm{polyeig}}$ resulting from MATLAB's {\it polyeig}.

Figure \ref{fig1}(left) shows the result from polyeig (open circles) and the
eigenvalues from Integral algorithm 1 (filled boxes) for the data
\begin{equation} \label{ex2}
\varphi(t)=R e^{it},\; t\in [0,2 \pi]\;, \;R=0.33,\; \mathrm{tol}_{\mathrm{rank}}=10^{-4},\;
\mathrm{tol}_{\mathrm{res}}= 10^{-1}.
\end{equation}
The eight eigenvalues inside the circle are detected and well approximated
by the integral algorithm. Figure \ref{fig1} (right) shows the errors
\begin{equation*} \label{errpoly}
e(\lambda_j)= \min\{ | \lambda_j -\mu| : \mu \in \sigma_{\mathrm{polyeig}}\}
\end{equation*}
for two characteristic eigenvalues inside the circle. Both show exponential
decay with respect to $N$ at approximately the same rate.

\begin{figure}[h] 
\psfrag{real}{$\re$}
\psfrag{imag}{$\im$}
\psfrag{N}{$N$}
\psfrag{logError}{$e(\lambda_j)$}
\centering
\includegraphics[width=0.48\textwidth,height=0.223\textheight]{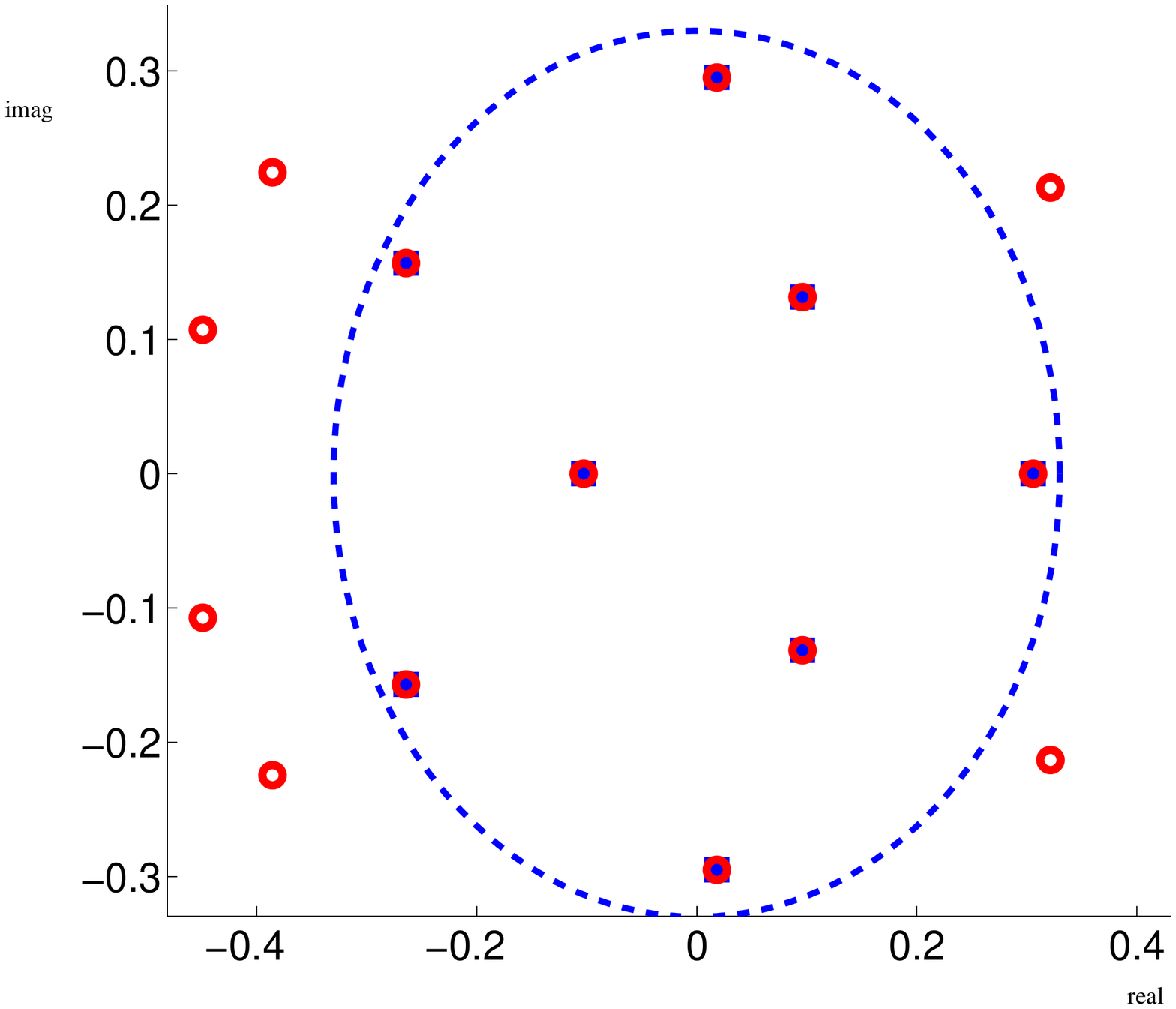} \quad
\includegraphics[width=0.48\textwidth]{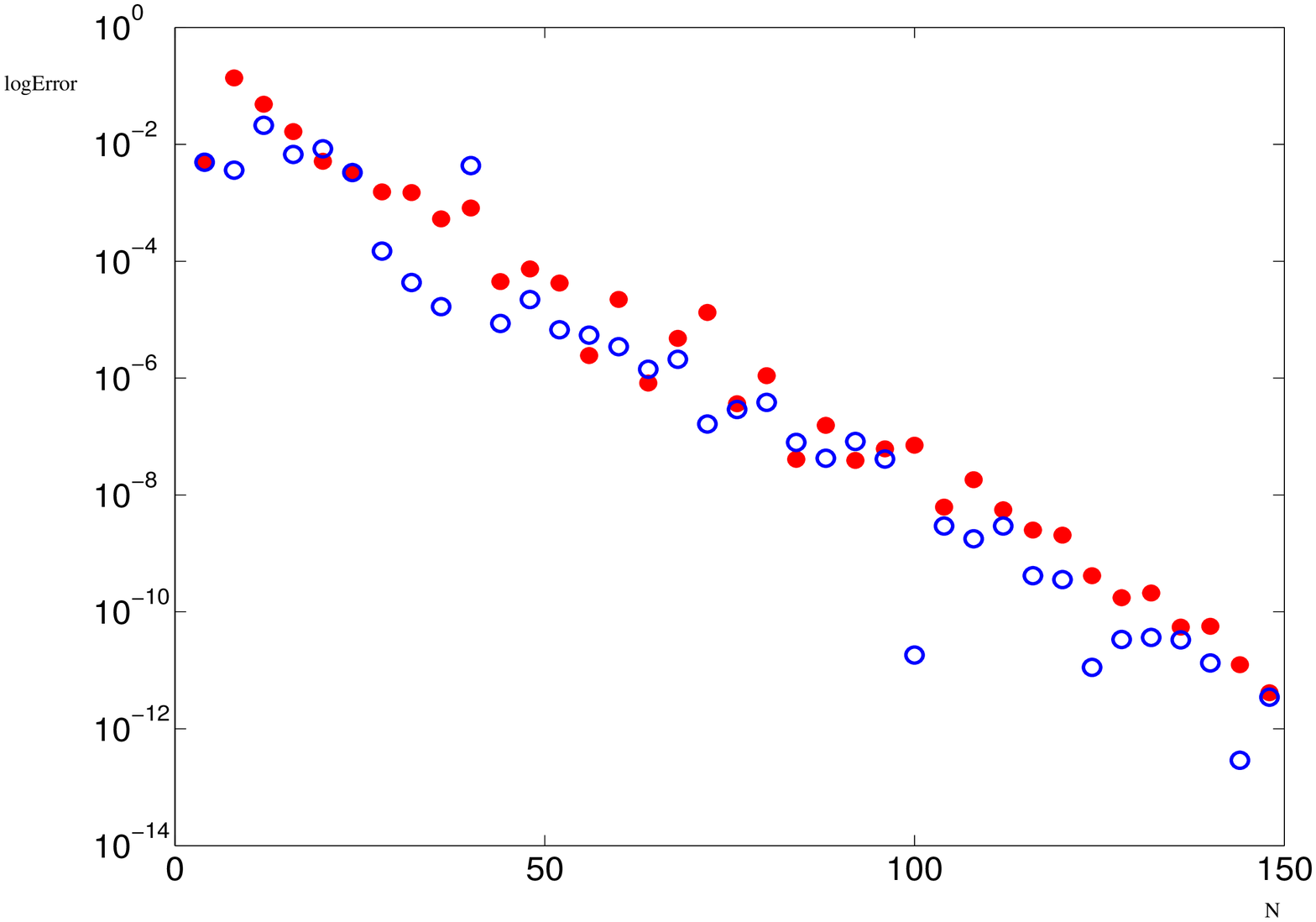}
\caption{\label{fig1} Example \ref{example1}.Eigenvalues of a quadratic eigenvalue problem  from
polyeig (open circles) and Integral algorithm 1 (filled squares) with $N=150$
(left). Difference $e(\lambda_j)$ of eigenvalues $\lambda_1 \approx0.30578$ (filled circles)
and
$\lambda_2\approx 0.0961 - 0.1315i$ (open circles) between polyeig and
the integral algorithm versus the number of nodes $N$ (right). }
  \end{figure}

\begin{figure}[h] 
\centering
\psfrag{N}{$N$}
\psfrag{logSingular}{$\sigma_j$}
\includegraphics[width=0.48\textwidth,height=0.25\textheight]{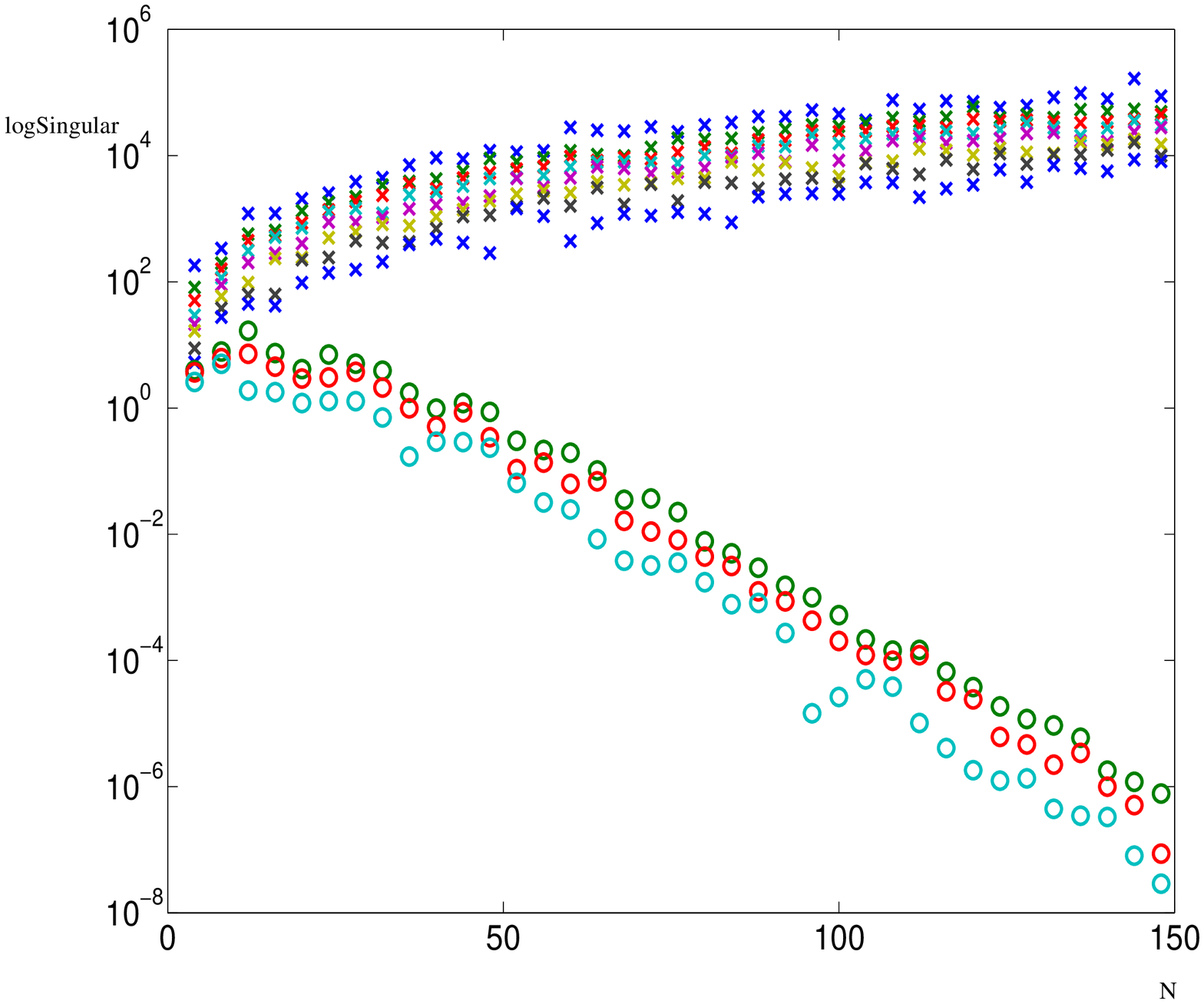} \quad
\includegraphics[width=0.48\textwidth]{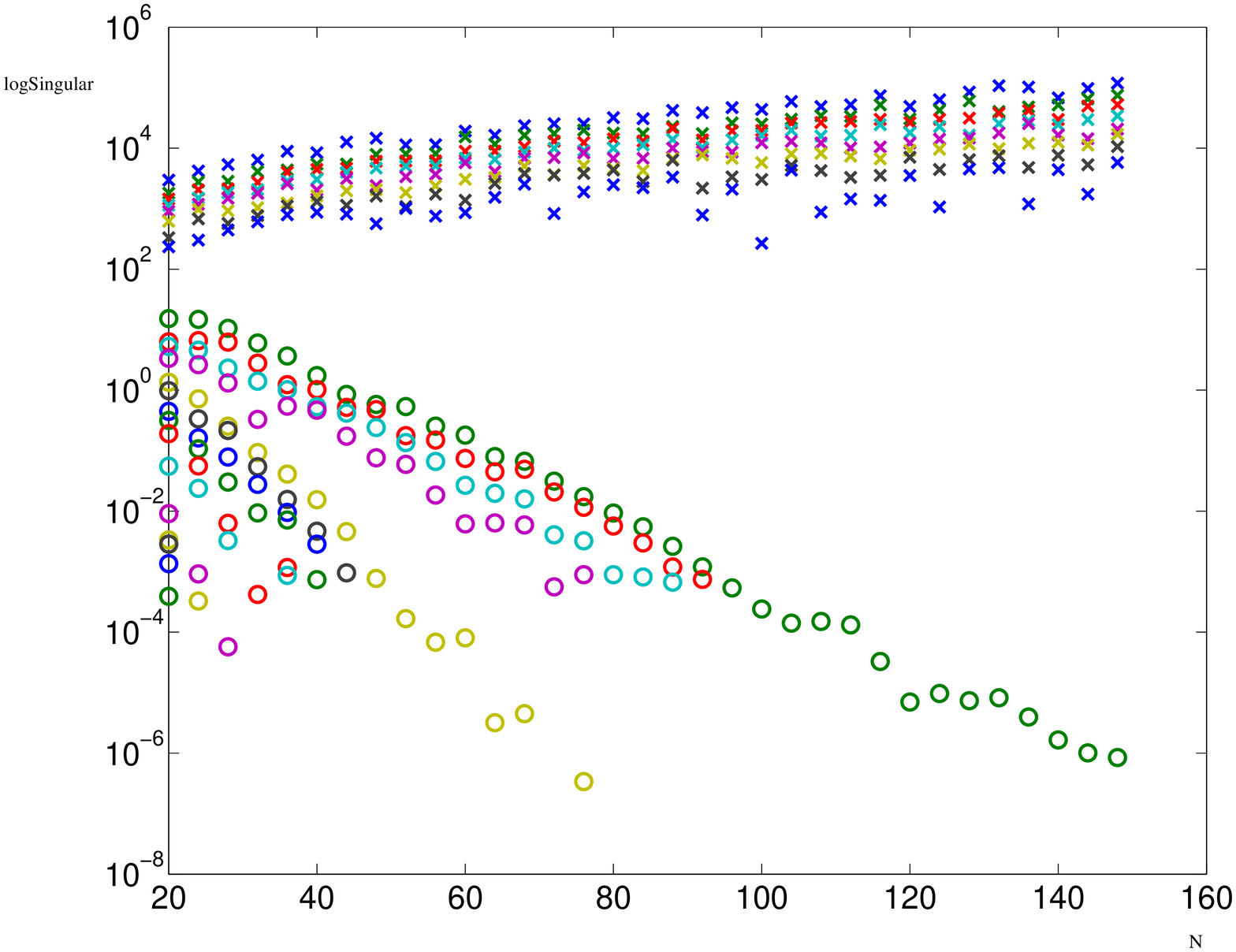}
\caption{\label{fig2} Example \ref{example1}. Singular values versus $N$ for a fixed number
of $l=11$ columns
in the integral algorithm (left), reduction of the number of singular values
by the rank test of the adaptive algorithm versus $N$ (right). }
\end{figure}

While Figure \ref{fig1} (left) results from the integral algorithm with
an adaptive number $l$ of columns (which yields $l=8$ at $N=150$), 
the computations in Figure \ref{fig1}(right) are done with a fixed  
number of $l=11$ columns. For this case
we show the behavior of the $11$ largest singular values of $A_{0,N}$
in Figure \ref{fig2} (left). Sufficient separation of singular values
already occurs at values $N\approx 25$, much smaller than $150$.
Figure \ref{fig2} (right) shows how the adaptive algorithm reduces the
number of singular values from $l=23$ at $N=20$ to $l=8$ for $N\ge 95$.
\end{example}
\begin{example} \label{example2}
For the next experiment we take random complex entries in  \eqref{ex1},
a fixed number $l=10$ of columns, and the same circle as in \eqref{ex2}.
Again, the $6$ eigenvalues inside the circle from {\it polyeig} are well 
approximated by the integral algorithm, see Figure \ref{fig3} (left).

\begin{figure}[h] 
\centering
\psfrag{real}{$\re$}
	\psfrag{imag}{$\im$}
\psfrag{N}{$N$}
	\psfrag{sigma}{$\sigma_j$}
\includegraphics[width=0.48\textwidth]{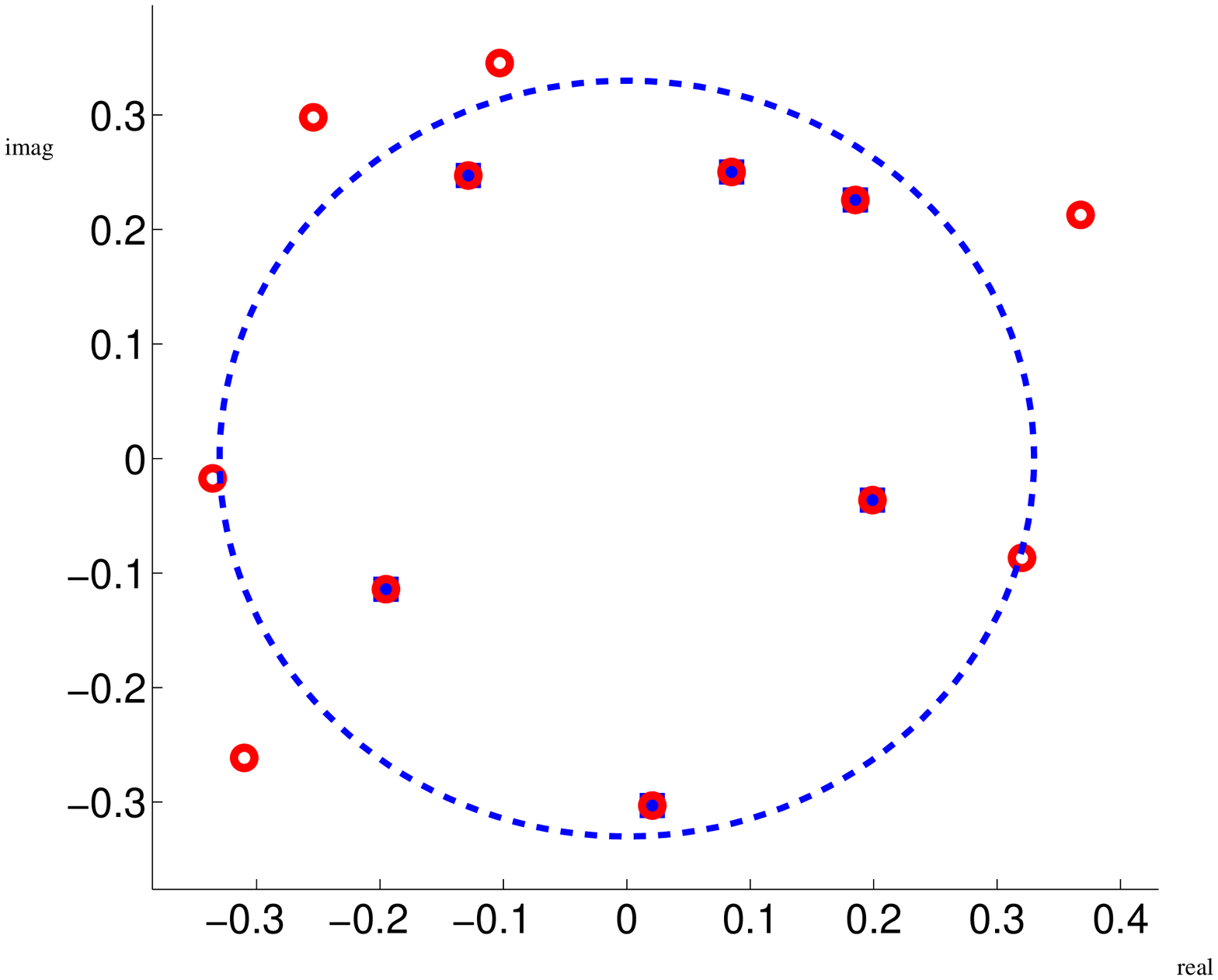} \quad
\includegraphics[width=0.48\textwidth,height=0.27\textheight]{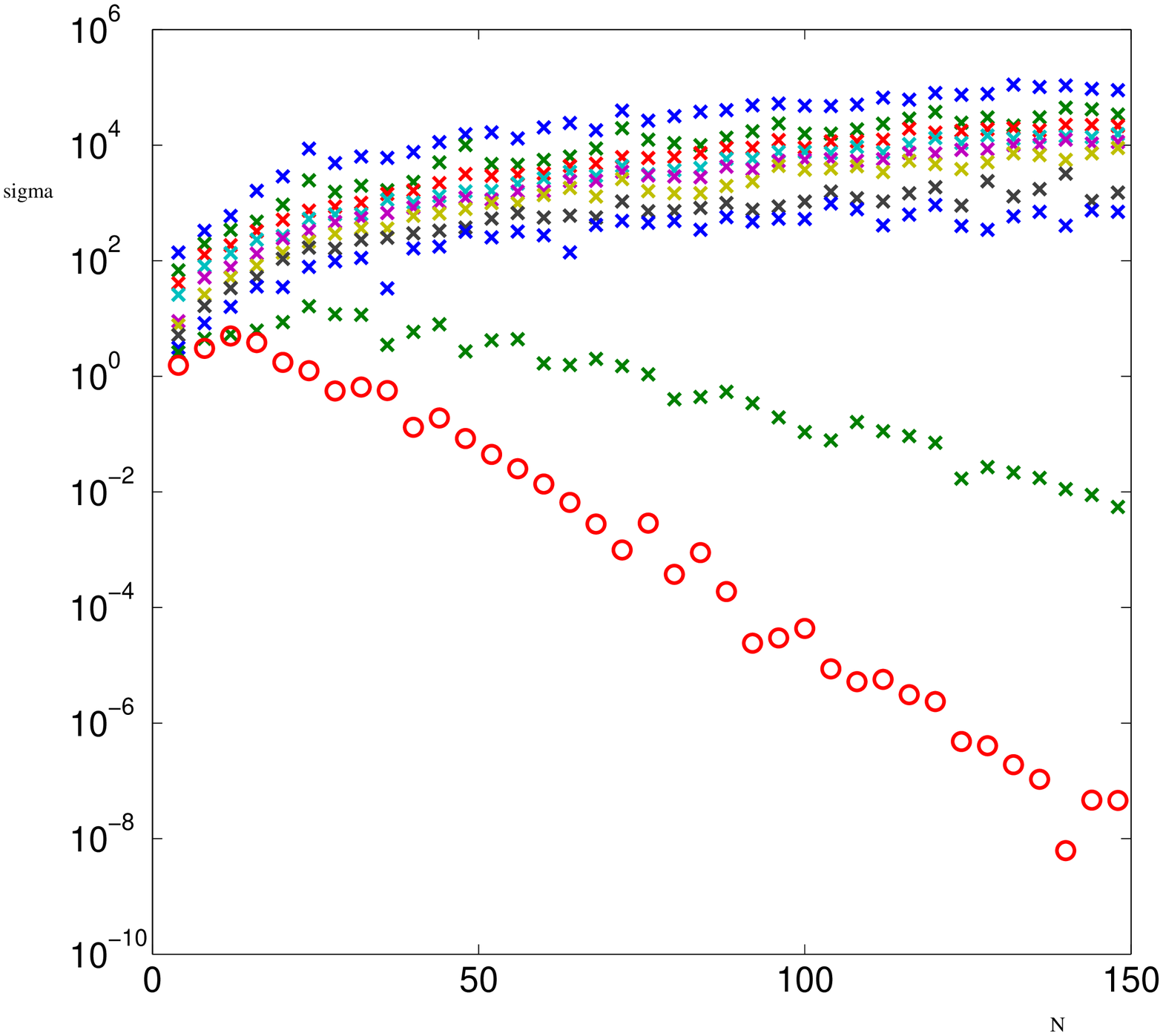}
\caption{\label{fig3} Example \ref{example2}. Eigenvalues from polyeig (open circles) and eigenvalues from
the integral algorithm for a random quadratic complex matrix polynomial
(left), singular values of integral algorithm with $l=10$ columns versus 
the number $N$ of quadrature nodes for
the same example (right). }
\end{figure}
But this time the singular values do not separate as well as in Figure 
\ref{fig2}
(left). Two of them decay rather slowly, while two others, due to 
eigenvalues very close but outside the contour, remain of order one.
However, this behavior does not result in spurious eigenvalues. On the 
contrary, if we keep $l=10$ for the eigenvalue computation, then this
yields the $6$ eigenvalues inside and in addition 
the four eigenvalues lying closest to the contour, but outside.
Such a behavior is also suggested by our error analyis in Section \ref{sec4.1}
according to which the principle error term depends on the distance
of eigenvalues to the contour, both for eigenvalues inside and outside.
Computational experience shows that only very small singular values
($\approx 10^{-10}$) lead to spurious eigenvalues and these can
be easily avoided by the residual test in Step 6.
\end{example}
\begin{example} \label{example3}
This example, taken from \cite{so06} and \cite{kr09}, is a finite
element discretization of a nonlinear boundary eigenvalue problem
\begin{equation*} - u''(x)= \lambda u(x), 0 \le x \le 1,
          u(0)=0=u'(1)+ \frac{\lambda}{\lambda-1}u(1).
\end{equation*}
The matrix function is $T(z)= T_1 + \frac{1}{1-z}e_m e_m^T - z T_3$, where 
\begin{equation*} \label{fem}
T_1 = m \begin{pmatrix} 2 & -1 & & \\
                    -1 &  \ddots & \ddots & \\
                       & \ddots & 2 & -1 \\
                       & & -1 & 1 
        \end{pmatrix}, \quad
T_3 = \frac{1}{6m} \begin{pmatrix}
               4 & 1 & &  \\
               1 &\ddots & \ddots & \\
                 & \ddots & 4 & 1 \\
                 &  & 1 & 2  
               \end{pmatrix}.
\end{equation*}

We use $m=400$ and compute five eigenvalues in the interval
$[2,298]$.
Again Figure \ref{fig4a} (left) shows the real eigenvalues in the circle
which agree with those from \cite{kr09}.  Note that we avoided the singularity
of $T$ at $z=1$. The residuals of the computed eigenvectors and eigenvalues
decay exponentially as expected, see Figure \ref{fig4a}, but not
as smooth as in the previous examples.

\begin{figure}[h] 
\centering
\psfrag{real}{$\re$}
	\psfrag{imag}{$\im$}
\psfrag{N}{$N$}
%	\psfrag{sigma}{$\sigma_j$}
\psfrag{res}{$res(\lambda_j)$}
\includegraphics[width=0.48\textwidth]{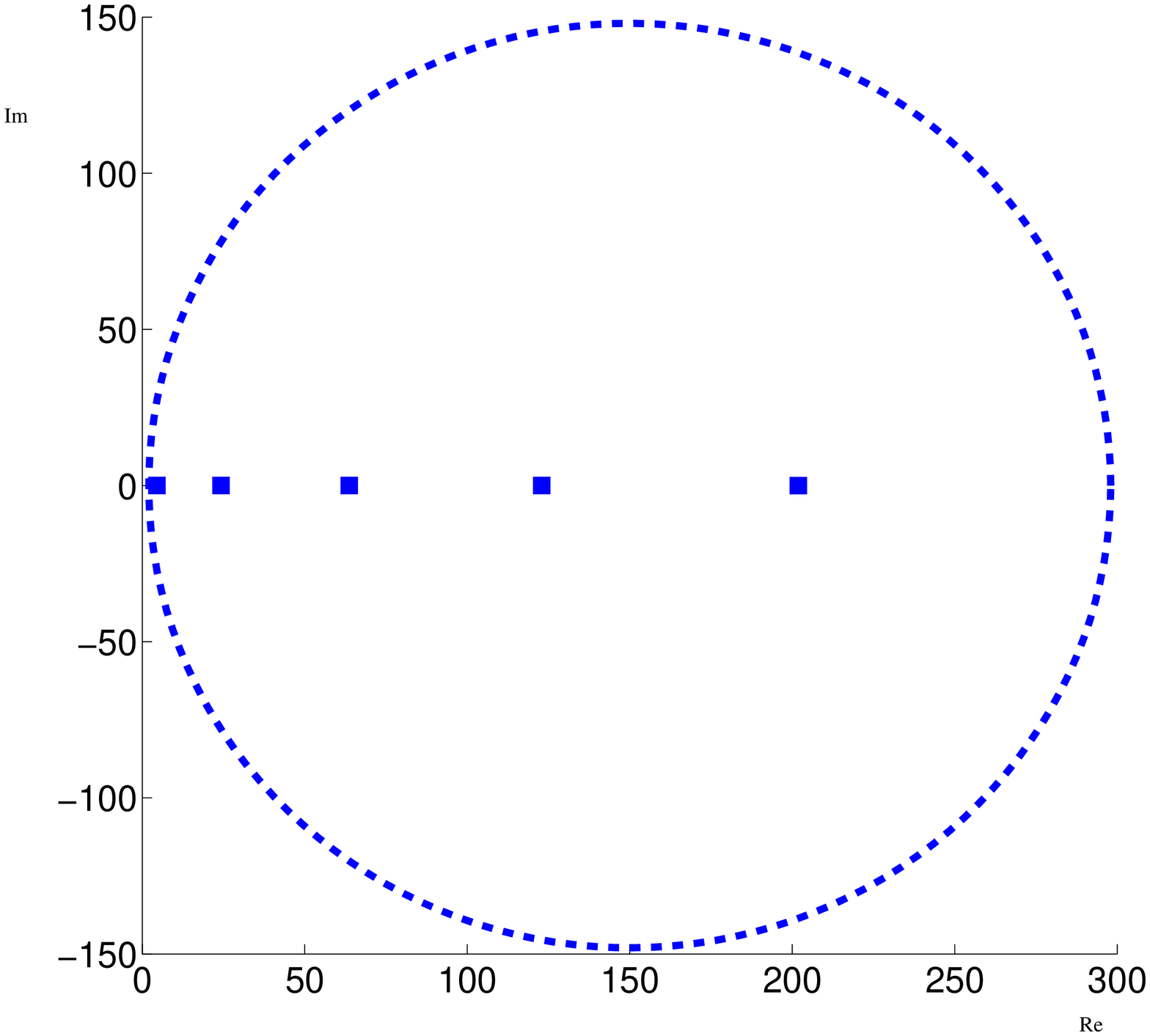} \quad
\includegraphics[width=0.48\textwidth]{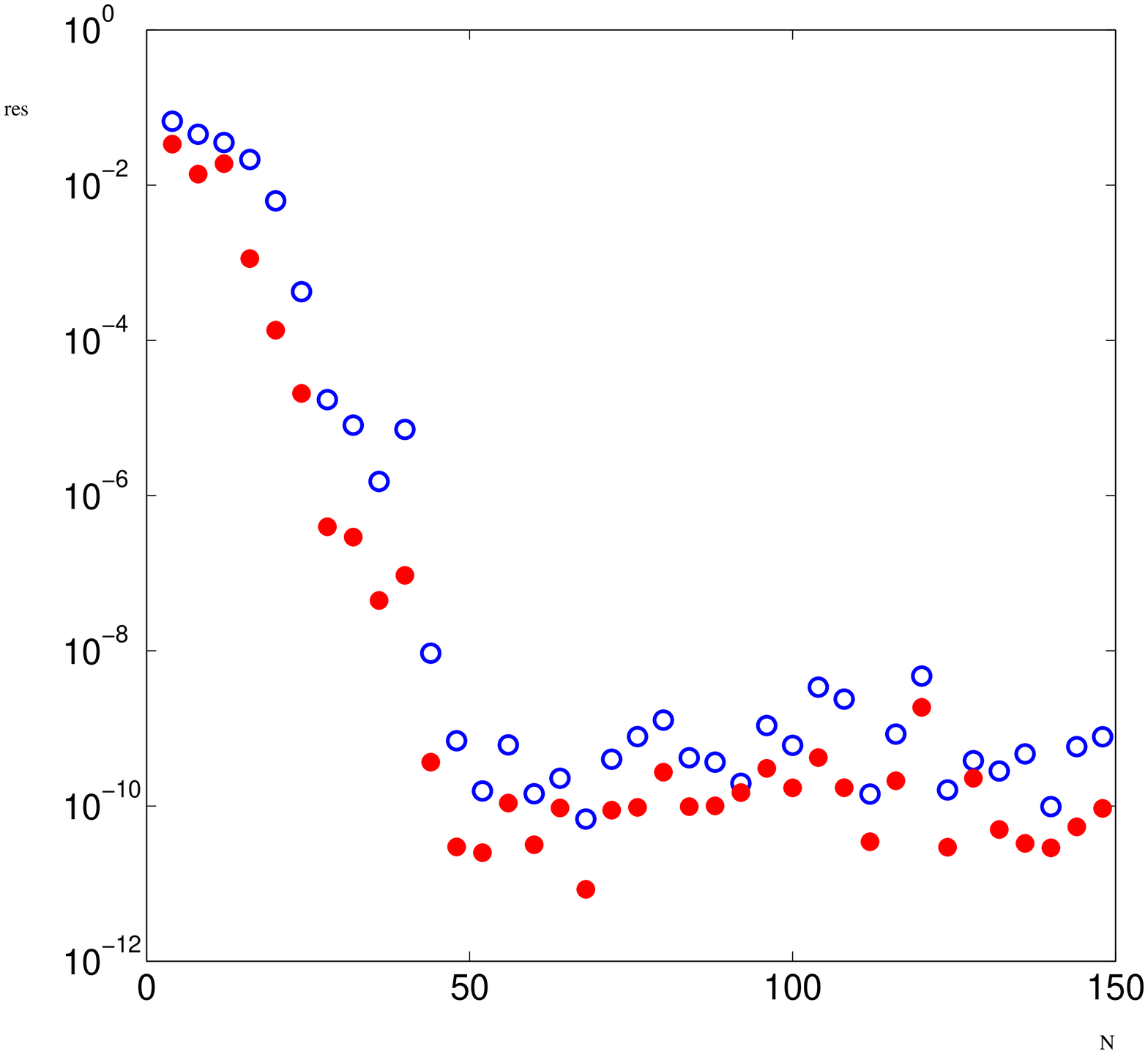}
\caption{\label{fig4a} Example \ref{example3}. Eigenvalues  from
the integral algorithm for the finite element discretization of a nonlinear
boundary eigenvalue problem (left), 
decay of residuals $\mathrm{res}(\lambda_j)=||T(\lambda_j)(v_j)||$ for 
$\lambda_1\approx 24$ (open circles), $\lambda_2\approx 123$ (filled circles)
 versus the number $N$ of quadrature nodes for
the same example (right). }
\end{figure}

\end{example}

\clearpage

\begin{example} \label{example4}
Consider the quadratic polynomial
\begin{equation} \label{excrit}
T(z)= T_0 + (z-a)(b-z)T_1, \quad a<b\in \mathbb{R}, \quad T_0,T_1\in
\mathbb{R}^{15,15},
\end{equation}
where $T_0$ has zeroes in the first column. All other entries
of $T_0,T_1$ are chosen at random. Then $T(z)$ has different eigenvalues $a$ and
$b$ with the same eigenvector $e^1\in \mathbb{R}^{m}$. This is a critical case
since  the rank condition \eqref{Vcond} is violated. In Figure \ref{fig4} (left)
we show the results of {\it polyeig} and of the integral algorithm (with $l=5$
and the data from \eqref{ex2}). There are three eigenvalues inside the circle.
Both eigenvalues $a=-0.2$ and $b=0.1$ are missed by the integral method,
while the third one is found, though at lower accuracy than in the 
previous examples. Figure \ref{fig4} shows that only one singular value
stays of order one when $N$ is increased. This example will be reconsidered
in Section \ref{sec5}.

\begin{figure}[h] 
\centering
\psfrag{real}{$\re$}
	\psfrag{imag}{$\im$}
\psfrag{N}{$N$}
	\psfrag{sigma}{$\sigma_j$}
\includegraphics[width=0.48\textwidth]{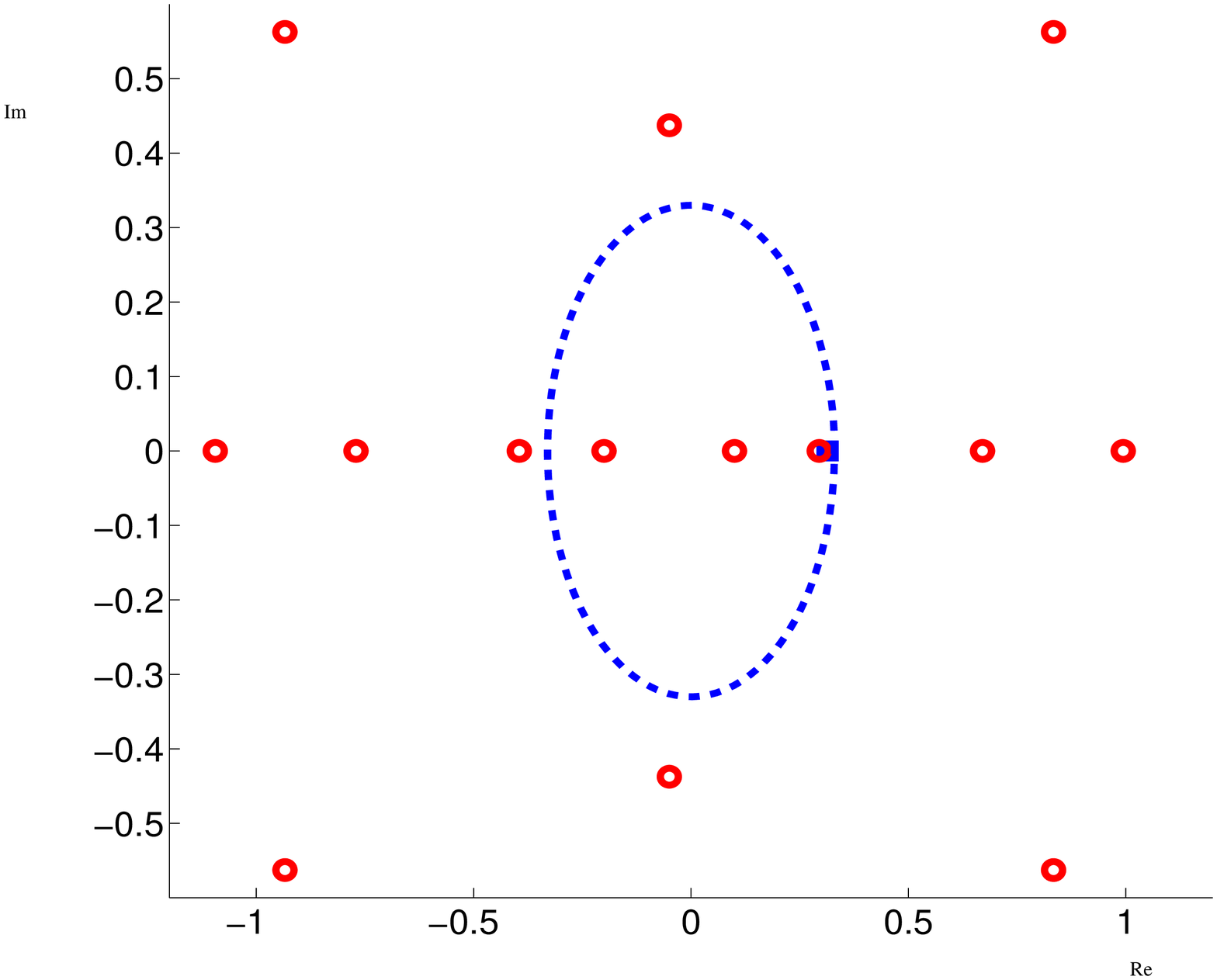} \quad
\includegraphics[width=0.48\textwidth,height=0.26\textheight]{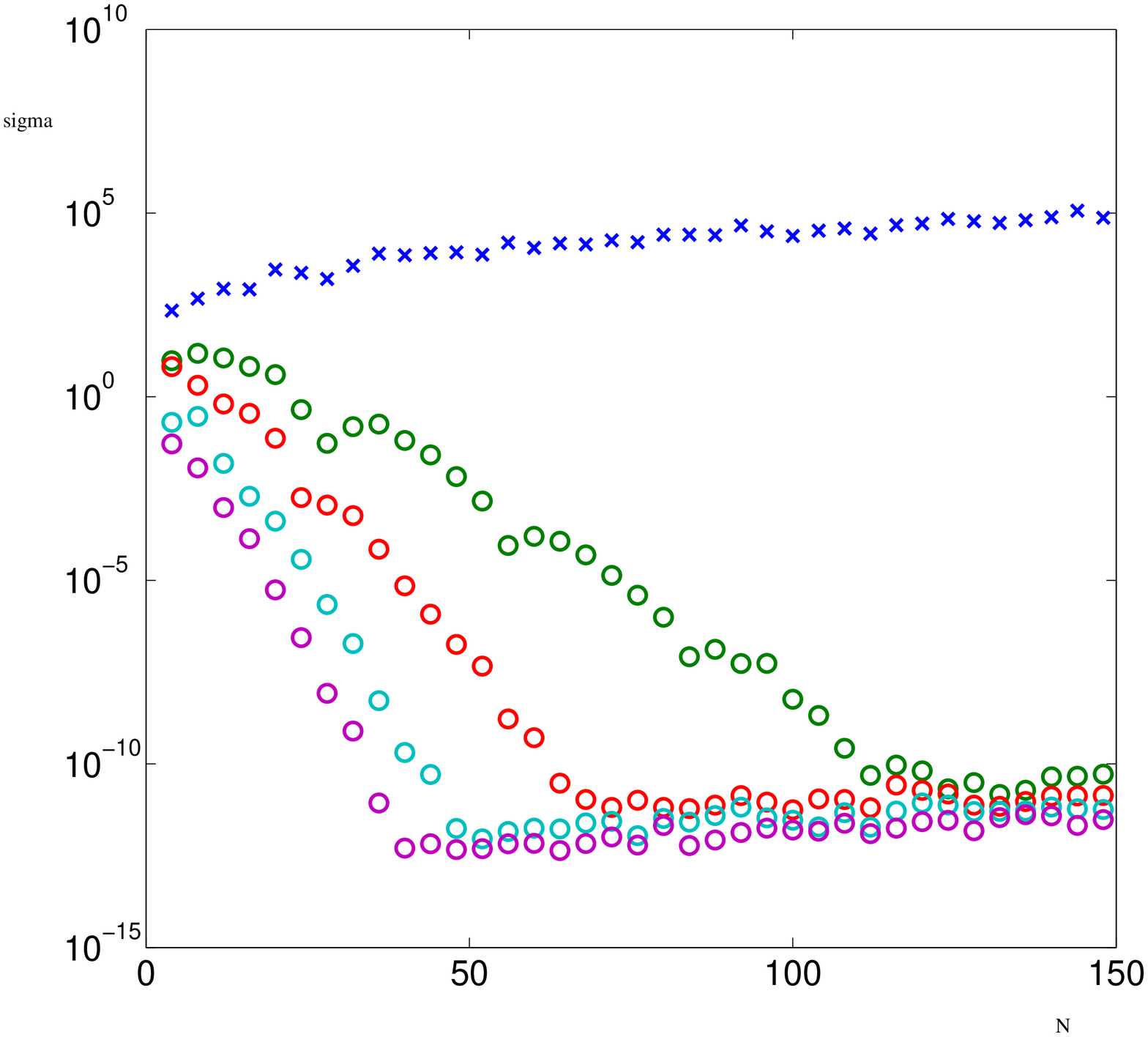}
\caption{\label{fig4} Example \ref{example3}. Eigenvalues from polyeig (open circles) and eigenvalues from
the integral algorithm for a quadratic matrix polynomial with rank defect
(left), singular values of integral algorithm with $l=5$ columns versus 
the number $N$ of quadrature nodes for
the same example (right). }
\end{figure}
 
\end{example}

\section{The algorithm for many eigenvalues}
\label{sec5}
In this section we show how the method from Section \ref{sec3}
can be extended to 
nonlinear eigenvalue problems with more eigenvalues than the dimension 
of the system, i.e. $m < k$, and to the rank deficient cases, see
Remark \ref{rem3.2} and Example \ref{example4}.
\subsection{Construction of algorithm} \label{sec5.1}
In case $m <k$ condition \eqref{Vcond} is always violated and there is
no matrix $\hat{V}$ satisfying \eqref{Wcond}.
Therefore, we compute more integrals of type (\ref{A0}),(\ref{A1}), namely
\begin{equation*} \label{Aq}
A_p = \frac{1}{2 \pi i} \int_{\Gamma} z^p T(z)^{-1}\hat{V} dz 
\in {\mathbb C}^{m,l},
\quad p \in {\mathbb N}.
\end{equation*}
Here we assume that $\hat{V}\in \mathbb{C}^{m,l}$ with $l \le m$.
In fact, in case $k>m$ we set $\hat{V}=I_m$ instead of making a random choice.

From Theorem \ref{intresolvent} we obtain
\begin{equation} \label{Aqexpress}
A_p= V \Lambda^p W^H \hat{V}, \;  
\; p \in {\mathbb N},
\end{equation}
where $V,W \in \mathbb{C}^{m,k}$ are given by \eqref{vdef} and \eqref{wdef}
and $\Lambda$ has the normal form \eqref{lambdanormal}. 

Now we choose $K \in {\mathbb N}, K\ge 1$ 
and form the $Km \times Kl$ matrices
\begin{equation} \label{Bform}
B_0= \begin{pmatrix} A_0 & \cdots & A_{K-1} \\
                           \vdots & & \vdots     \\
                          A_{K-1} & \cdots & A_{2K-2}
          \end{pmatrix}, \quad
B_1= \begin{pmatrix} A_1 & \cdots & A_{K} \\
                           \vdots & & \vdots     \\
                          A_{K} & \cdots & A_{2K-1}
          \end{pmatrix}.  
\end{equation}
From (\ref{Aqexpress}) we find the  representations
\begin{equation} \label{B0express}
B_0 = \begin{pmatrix} V \\ \vdots \\ V \Lambda^{K-1}
           \end{pmatrix}
           \begin{pmatrix} W^H\hat{V} & \cdots & \Lambda^{K-1}W^H \hat{V}
           \end{pmatrix},
\end{equation}
and
\begin{equation} \label{B1express}
B_1 = \begin{pmatrix} V \\ \vdots \\ V \Lambda^{K-1}
           \end{pmatrix} \Lambda
           \begin{pmatrix} W^H \hat{V} & \cdots & \Lambda^{K-1}W^H \hat{V}
           \end{pmatrix}.
\end{equation}

We assume that  $K$ has been chosen such that the following
rank condition holds
\begin{equation}\label{rankcond}
\text{rank}\begin{pmatrix} V \\ \vdots \\ V \Lambda^{K-1}
           \end{pmatrix} =k.
\end{equation}
The smallest index having this property is called the minimality index in
\cite{kr09}.
In case $k>m$ this can be expected to hold if we choose
 \begin{equation*} \label{Kselect}
 (K-1)m < k \le Km.
 \end{equation*}
In case $k\le m$ with $\mathrm{rank}(V)<k$ (see Remark \ref{rem3.2}(b))
the following lemma shows that \eqref{rankcond} holds for $K$ larger
than the sum of the maximal ranks at all eigenvalues.
\begin{lemma} \label{rankbound}
Let the assumptions of Corollary \ref{cor1} be satisfied.
Then the rank conditon \eqref{rankcond} holds with $k$ as defined
in \eqref{kdef} for
\begin{equation*} \label{Klarge}
K \ge \sum_{n=1}^{n(\mathcal{C})} \max_{1\le \ell \le L_n}m_{\ell,n}.
\end{equation*}
\end{lemma}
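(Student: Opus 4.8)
The plan is to show that the matrix in \eqref{rankcond}, which has $Km$ rows and $k$ columns, has trivial kernel; since it has exactly $k$ columns, this is the same as having rank $k$. So I assume $x\in\mathbb{C}^k$ satisfies $V\Lambda^{j}x=0$ for $j=0,\dots,K-1$ and aim to deduce $x=0$. By \eqref{lambdanormal}, $\Lambda=\mathrm{diag}(J_1,\dots,J_{n(\mathcal{C})})$ is block diagonal along the eigenvalues, $J_n$ having the single eigenvalue $\lambda_n$ and largest Jordan block of size $\kappa_n:=\max_{1\le\ell\le L_n}m_{\ell,n}$, so that $(J_n-\lambda_n I)^{\kappa_n}=0$. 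Accordingly I write $V=(V_1\;\cdots\;V_{n(\mathcal{C})})$, with $V_n$ the block of generalized eigenvectors attached to $\lambda_n$, and $x=(x_1,\dots,x_{n(\mathcal{C})})$.

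The first step decouples the eigenvalues. Fix $n$ and set $q_n(z)=\prod_{n'\ne n}(z-\lambda_{n'})^{\kappa_{n'}}$, a polynomial of degree $d_n=\sum_{n'\ne n}\kappa_{n'}=K_0-\kappa_n$ where $K_0:=\sum_{n'=1}^{n(\mathcal{C})}\kappa_{n'}$. Since the $\lambda_{n'}$ are pairwise distinct, $q_n(J_m)=0$ for $m\ne n$ (the factor $(J_m-\lambda_m I)^{\kappa_m}$ vanishes), whereas $q_n(J_n)$ is invertible (it is triangular with all diagonal entries equal to $q_n(\lambda_n)\ne0$). Put $y_n:=q_n(J_n)x_n$, so that $q_n(\Lambda)x$ is supported on the $n$-th block with value $y_n$. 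Writing $q_n(z)=\sum_{i=0}^{d_n}c_iz^i$ and using that $\Lambda^{j'}$ commutes with $q_n(\Lambda)$, for every $0\le j'\le K-1-d_n$ I get
\begin{equation*}
V_nJ_n^{j'}y_n=V\Lambda^{j'}q_n(\Lambda)x=\sum_{i=0}^{d_n}c_iV\Lambda^{j'+i}x=0.
\end{equation*}
Since $K\ge K_0$, we have $K-1-d_n\ge\kappa_n-1$, hence $V_nJ_n^{j'}y_n=0$ for $j'=0,\dots,\kappa_n-1$.

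The second, and only delicate, step is the single-eigenvalue claim: these $\kappa_n$ relations force $y_n=0$. Write $J_n=\lambda_nI+N_n$ with $N_n$ nilpotent; since $N_n^{\kappa_n}=0$, the powers $J_n^0,\dots,J_n^{\kappa_n-1}$ and $N_n^0,\dots,N_n^{\kappa_n-1}$ span the same subspace of matrices (the change-of-basis matrix $\bigl[\binom{j'}{i}\lambda_n^{j'-i}\bigr]$ is triangular with unit diagonal), so the relations are equivalent to $V_nN_n^{j'}y_n=0$ for $j'=0,\dots,\kappa_n-1$. On each Jordan chain $N_n$ acts as the backward shift $v_s^{\ell,n}\mapsto v_{s-1}^{\ell,n}$, $v_0^{\ell,n}\mapsto 0$, so with $y_n=(y_s^{\ell,n})$ these read $\sum_{\ell}\sum_{s=j'}^{m_{\ell,n}-1}y_s^{\ell,n}v_{s-j'}^{\ell,n}=0$ for $j'=0,\dots,\kappa_n-1$. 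I peel off the coefficients by downward induction on $j'$: if $y_s^{\ell,n}=0$ is already known for all $\ell$ and all $s>j'$, then in the relation with index $j'$ only the terms with $s=j'$ survive, giving $\sum_{\ell:\,m_{\ell,n}>j'}y_{j'}^{\ell,n}v_0^{\ell,n}=0$; since $\{v_0^{\ell,n}:1\le\ell\le L_n\}$ is a basis of $N(T(\lambda_n))$ by part~(a) of Definition~\ref{def2}, this yields $y_{j'}^{\ell,n}=0$ for every $\ell$. Running the induction from $j'=\kappa_n-1$ down to $j'=0$ gives $y_n=0$, and invertibility of $q_n(J_n)$ then gives $x_n=0$. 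As $n$ was arbitrary, $x=0$, which proves \eqref{rankcond}.

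I expect the separation step to be routine. The point requiring care is the book-keeping in the last induction — tracking which chains still "reach" a given index $j'$ — together with the observation that the only structural input needed there is the linear independence of the leading vectors $v_0^{\ell,n}$, not of the full canonical system. It is precisely the bound $K\ge\sum_n\kappa_n$ that makes the required number of shifted relations $V_nJ_n^{j'}y_n=0$ ($j'=0,\dots,\kappa_n-1$) available after the polynomial reduction, so that the peeling can be carried through to the bottom.
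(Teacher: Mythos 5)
Your proof is correct and follows essentially the same strategy as the paper's: reduce the rank condition to injectivity of $V_{[K]}$, annihilate the other eigenvalue blocks with the polynomial $\prod_{n'\neq n}(z-\lambda_{n'})^{\kappa_{n'}}$ (the paper folds the factor $(z-\lambda_n)^{\beta}$ directly into a single family of polynomials $P_{n,\beta}$ of degree at most $K-1$, whereas you apply $q_n$ first and then pass from powers of $J_n$ to powers of the nilpotent part), and finish by the same downward induction using only the linear independence of the leading vectors $v_0^{\ell,n}$. The reorganization is cosmetic; the argument is the paper's.
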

\begin{proof} Let $M_n=\max_{1\le \ell \le L_n}m_{\ell,n}$ and 
$M= \sum_{n=1}^{n(\mathcal{C})}M_n$. Assume that $V \Lambda^j x=0,j=0,\ldots,M-1$
for some $x \in \mathbb{C}^{m}$. For any $n\in \{1,\ldots,n(\mathcal{C})\}$
and $0 \le \beta \le M_n -1$ consider the polynomial
\begin{equation*} \label{Lagrange}
P_{n,\beta}(z) = (z-\lambda_n)^{\beta} 
\prod_{r=1, r\neq n}^{n(\mathcal{C})}(z-\lambda_r)^{M_r}.
\end{equation*}
By our assumption $0 = V P_{n,\beta}(\Lambda)x$. We partition
according to \eqref{lambdanormal} 
\begin{eqnarray*} 
V =& \begin{pmatrix} V_1 & \cdots & V_{n(\mathcal{C})}
     \end{pmatrix},
V_n = \begin{pmatrix} V_{n,1} &\cdots & V_{n,L_n}
      \end{pmatrix}, 
V_{n,\ell} = \begin{pmatrix} v^{\ell,n}_{0} &\cdots & v^{\ell,n}_{m_{\ell,n}-1}
             \end{pmatrix} \\
x =& \begin{pmatrix} x_1 & \cdots & x_{n(\mathcal{C})}
     \end{pmatrix},
x_n = \begin{pmatrix} x_{1,n} &\cdots & x_{L_n,n}
      \end{pmatrix}, 
x_{\ell,n} = \begin{pmatrix} x^{\ell,n}_{0} &\cdots & x^{\ell,n}_{m_{\ell,n}-1}
             \end{pmatrix}.
\end{eqnarray*}
Using $(J_{\tilde{n}}-\lambda_{\tilde{n}})^{M_{\tilde{n}}}=0$ for $\tilde{n} \neq n$
we obtain
\begin{equation}\label{sumid}
 0 =  \sum_{\ell=1}^{L_n} V_{n,\ell} 
\prod_{\tilde{n}\neq n,  m_{\ell,n}-1 \ge \beta
}^{n(\mathcal{C})} (J_{n,\ell}-\lambda_{\tilde{n}})^{M_{\tilde{n}}}
(J_{n,\ell}-\lambda_n)^{\beta} x_{\ell,n}.
\end{equation}
From this we conclude by induction on $\beta=M_{n}-1,\ldots,0$ that
\begin{equation} \label{xvanish}
 x^{n,\ell}_{\nu}= 0, \quad \text{if} \; 
\beta\le \nu \le m_{\ell,n}-1.
\end{equation}
For $\beta=M_{n}-1$, equation \eqref{sumid} reads
\begin{equation*}
0= \prod_{\tilde{n}\neq n}^{n(\mathcal{C})} 
(\lambda_n-\lambda_{\tilde{n}})^{M_{\tilde{n}}}
\sum_{\ell=1, m_{\ell,n}=M_n}^{L_n} v_{0}^{\ell,n} x^{\ell,n}_{m_{\ell,n}-1},
\end{equation*}
and thus  \eqref{xvanish} holds for $\beta=M_n$ by the linear independence of 
the vectors $v_{0}^{\ell,n}$ (cf. Definition \ref{def2} (iv)).
For the induction step we use \eqref{sumid} with $\beta-1$ instead of $\beta$.
Together with \eqref{xvanish} we find
\begin{equation*}
 0= \prod_{\tilde{n}\neq n}^{n(\mathcal{C})} 
(\lambda_n-\lambda_{\tilde{n}})^{M_{\tilde{n}}}
\sum_{\ell=1, m_{\ell,n}\ge \beta}^{L_n} v_{0}^{\ell,n} x^{\ell,n}_{\beta-1},
\end{equation*}
which shows that \eqref{xvanish} holds for $\beta -1$.
Thus we have shown $x=0$ and this finishes the proof.
\end{proof}

The computational procedure is now a straightforward generalization
of Section \ref{sec3.1}.
First compute $B_0,B_1 \in \mathbb{C}^{Km,Kl}$ from \eqref{Bform}.
In addition to \eqref{rankcond}, assume
\begin{equation} \label{rankw}
\mathrm{rank} \begin{pmatrix} W^H\hat{V} & \cdots & \Lambda^{K-1}W^H \hat{V}
           \end{pmatrix} = k .
\end{equation}
Let us abbreviate
\begin{equation*}\label{vwabbr}
V_{[K]}=\begin{pmatrix} V \\ \vdots \\ V \Lambda^{K-1}
           \end{pmatrix} \in \mathbb{C}^{Km,k}, \quad 
W_{[K]}^H =\begin{pmatrix} W^H\hat{V} & \cdots & \Lambda^{K-1}W^H \hat{V}
           \end{pmatrix} \in \mathbb{C}^{k,Kl}.
\end{equation*}
Compute the SVD 
\begin{equation*} \label{B0svd}
V_{[K]} W_{[K]}^H = B_0 = V_0 \Sigma_0 W_0^H,
\end{equation*}
where 
$V_0\in \mathbb{C}^{Km,k}, V_0^H V_0=I_k$, 
$ \Sigma_0=\text{diag}(\sigma_{1},\ldots,\sigma_{k})\in \mathbb{C}^{k,k}$,
and  $W_0\in\mathbb{C}^{Kl,k}$, $W_0^HW_0=I_k$.
From the rank conditions \eqref{rankcond},\eqref{rankw},
\begin{equation*} \label{singvalK}
 \sigma_{1} \geq \ldots \sigma_{k} > 0= \sigma_{k+1}=  \ldots 
= \sigma_{Kl}.
\end{equation*}
The rank condition \eqref{rankcond} also implies
\begin{equation*}
R(B_0)=R(V_{[K]})=R(V_0).
\end{equation*}
Thus the matrix  $S=V_0^H V_{[K]} \in \mathbb{C}^{k,k}$ is nonsingular
 and satisfies
\begin{equation} \label{SrelK}
V_{[K]} = V_0 S.
\end{equation}
 With (\ref{B0express}), (\ref{SrelK}) we find
\begin{equation*}
W_{[K]}^H = S^{-1} \Sigma_0W_0^H,
\end{equation*} 
and then from \eqref{B1express}
\begin{equation*}
  B_1= V_{[K]} \Lambda W_{[K]} ^H  =
		V_0 S \Lambda S^{-1}\Sigma_0W_0^H.
\end{equation*}
Finally, this leads to
\begin{equation} \label{deq}
	D: = V_0^H B_1 W_0\Sigma_0^{-1} = S \Lambda S^{-1}.
\end{equation}
Therefore, the analog of Theorem \ref{algmult} is 
\begin{theorem} \label{algK}
Suppose that $T \in H(\Omega,\mathbb{C}^{m,m})$ has no eigenvalues on
the contour $\Gamma$ in $\Omega$ and pairwise distinct eigenvalues
$\lambda_n,n=1,\ldots,n(\Gamma)$ inside $\Gamma$ with partial
multiplicities $m_{1,n} \ge \ldots \ge m_{L_n,n}, n=1,\ldots,n(\Gamma)$.
Assume that the rank conditions \eqref{rankcond},\eqref{rankw} are satisfied
with  $k$ given by \eqref{kdef}.
 Then the matrix $D\in \mathbb{C}^{k,k}$ from \eqref{deq} has Jordan normal
form \eqref{lambdanormal} with the same eigenvalues $\lambda_n$
and partial multiplicities   $m_{\ell,n}$ 
($\ell=1,\ldots,L_n,n=1,\ldots,n(\Gamma)$). 
Suitable CSGEs for $T$ can be obtained from corresponding CSGEs 
$s_j^{\ell,n}$ for $D$ via
\begin{equation*} \label{gevK}
v_j^{\ell,n} = V_0^{[1]} s_j^{\ell,n}, \quad 0\le j \le m_{\ell,n}-1, 
1\le \ell \le L_n, 1\le n \le n(\Gamma),
\end{equation*}
where $V_0^{[1]}$ is the upper $m \times k$ block in 
\begin{equation} \label{v0rep}
V_0 = \begin{pmatrix} V_0^{[1]} \\ \vdots \\ V_0^{[K]} \end{pmatrix}.
\end{equation}
\end{theorem}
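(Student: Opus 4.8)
The plan is to mirror exactly the argument of Theorem \ref{algmult}, now carried out at the level of the block matrices $B_0,B_1$ rather than $A_0,A_1$. The key structural facts are already in hand: equations \eqref{B0express} and \eqref{B1express} give the factorizations $B_0=V_{[K]}W_{[K]}^H$ and $B_1=V_{[K]}\Lambda W_{[K]}^H$, and the rank conditions \eqref{rankcond},\eqref{rankw} say precisely that $V_{[K]}$ and $W_{[K]}^H$ both have full rank $k$. From here the derivation \eqref{SrelK}--\eqref{deq} shows that $D=V_0^HB_1W_0\Sigma_0^{-1}$ equals $S\Lambda S^{-1}$ with $S=V_0^HV_{[K]}$ nonsingular. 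So the first step is simply to record that $D$ is \emph{similar} to $\Lambda$, hence has the same eigenvalues $\lambda_n$ and the same Jordan structure, i.e.\ the same partial multiplicities $m_{\ell,n}$; the block form \eqref{lambdanormal} of $\Lambda$ was established in Section \ref{sec3.2} and is inherited verbatim.

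Next I would extract the eigenvector formula. Since $D=S\Lambda S^{-1}$, a CSGE $(s_j^{\ell,n})$ of $D$ corresponds under $S$ to the standard CSGE of $\Lambda$ in its Jordan form; equivalently $S^{-1}s_j^{\ell,n}$ are the canonical basis vectors grouped according to \eqref{lambdanormal}. Feeding this through $V_{[K]}=V_0S$ gives $V_0 s_j^{\ell,n}=V_{[K]}S^{-1}s_j^{\ell,n}$, which picks out exactly the columns of $V_{[K]}$, i.e.\ the stacked generalized eigenvectors $(v_j^{\ell,n},\,\Lambda\text{-shifted copies})$. Reading off the top $m\times k$ block $V_0^{[1]}$ from \eqref{v0rep} then yields $v_j^{\ell,n}=V_0^{[1]}s_j^{\ell,n}$, since the first block row of $V_{[K]}$ is $V$ itself (not a $\Lambda$-shifted copy). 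One small point to verify here: that the CGE property is preserved, i.e.\ that $(V_0^{[1]}s_0^{\ell,n},\dots,V_0^{[1]}s_{m_{\ell,n}-1}^{\ell,n})$ is genuinely a chain of generalized eigenvectors of $T$ at $\lambda_n$; this follows because the first block row of the identity $V_{[K]}=V_0 S$ reads $V=V_0^{[1]}S$, and $V$ already consists of CSGEs of $T$ by construction \eqref{vdef}, so the columns of $V_0^{[1]}S$ are those CSGEs and $V_0^{[1]}s_j^{\ell,n}$ is the appropriate linear combination.

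The main obstacle—really the only nonroutine point—is the transfer of the full multiplicity structure, not merely the eigenvalues: one must argue that $D\sim\Lambda$ forces the partial multiplicities of $D$ (as a plain matrix) to coincide with the $m_{\ell,n}$, and that these in turn are the partial multiplicities of $T$ at $\lambda_n$. The first half is standard linear algebra (similar matrices have identical Jordan form), and the second half is exactly the content already asserted in Section \ref{sec3.2} and Corollary \ref{cor1}: the matrix $\Lambda$ built from the CSGEs has Jordan blocks of sizes $m_{\ell,n}$, which are by Definition \ref{def2}(iv) and Theorem \ref{thekeldysh} the partial multiplicities of $T$. So the proof is essentially a remark: \textbf{``the computation \eqref{SrelK}--\eqref{deq} shows $D=S\Lambda S^{-1}$ with $S$ nonsingular, and the eigenvector formula follows from the first block row of $V_{[K]}=V_0S$; the rest is as in the proof of Theorem \ref{algmult}.''} I would write it at roughly that level of brevity, spelling out only the block-row identity $V=V_0^{[1]}S$ and the deduction $v_j^{\ell,n}=V_0^{[1]}s_j^{\ell,n}$, and otherwise citing the earlier arguments. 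No genuinely new estimate or construction is required—the work was front-loaded into establishing \eqref{B0express}, \eqref{B1express} and Lemma \ref{rankbound}.
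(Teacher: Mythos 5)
Your proposal is correct and follows essentially the same route as the paper: the paper's ``proof'' is precisely the inline derivation \eqref{B0express}--\eqref{deq} preceding the theorem statement, which establishes $D=S\Lambda S^{-1}$ with $S=V_0^HV_{[K]}$ nonsingular, and the eigenvector formula is exactly the first block row $V=V_0^{[1]}S$ of $V_{[K]}=V_0S$ as you observe. Nothing is missing.
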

\begin{remark} In a sense this generalization is similar to
linearizing a polynomial eigenvalue problem by increasing the dimension.
Note, however, that this only becomes necessary if there are too many
eigenvalues inside the contour, or if rank defects occur that are
not present in linear eigenvalue problems.
\end{remark}
The generalization of the algorithm from Section \ref{sec3.3} is the following.
\bigskip

\noindent
{\bf Integral algorithm 2}
\smallskip

\noindent
{\bf Step 1:} Choose numbers $l \le m$, $K\ge 1$ and a matrix 
$\hat{V}\in \mathbb{C}^{m,l}$ at random. If  more than $m$
eigenvalues are expected inside $\Gamma$, let $l=m, \hat{V}=I_m$.
\smallskip

\noindent 
{\bf Step 2:} Compute
\begin{equation*} \label{trapK}
 A_{p,N} = \frac{1}{iN} \sum_{j=0}^{N-1}T(\varphi(t_j))^{-1} \hat{V}
\varphi(t_j)^p \varphi'(t_j), \quad p=0,\ldots,2K-1,
\end{equation*}
and form $B_{0,N}$,$B_{1,N}$ as in \eqref{Bform}.
\smallskip

\noindent
{\bf Step 3:} Compute the SVD $B_{0,N}=V \Sigma W^H$, where \\
$V\in \mathbb{C}^{Km,Kl}$, $W \in \mathbb{C}^{Kl,Kl}$, $V^HV=W^HW=I_{Kl}$, 
$\Sigma=\mathrm{diag}(\sigma_1,\sigma_2,\ldots,\sigma_{Kl})$.
\smallskip

\noindent
{\bf Step 4:} Perform a rank test for $\Sigma$, i.e.
 find $0<k\le Kl$ such that \\
$\sigma_1 \ge \ldots \ge \sigma_k > \sigma_{k+1} \approx \ldots
\approx \sigma_{Kl} \approx 0 $. \\
If $k=Kl$ then increase $l$ or $K$  and go to Step 1.\\
Else let $V_0=V(1:Km,1:k),W_0=W(1:Kl,1:k)$ and \\ 
$\Sigma_0=\mathrm{diag}(\sigma_1,\sigma_2,\ldots,\sigma_k)$.
\smallskip

\noindent
{\bf Step 5:} Compute $D=V_0^H B_{1,N} W_0 \Sigma_0^{-1} \in 
\mathbb{C}^{k,k}$.
\smallskip

\noindent
{\bf Step 6:} Solve the eigenvalue problem for $D$ \\
$DS =S \Lambda$, $S=(s_1 \ldots s_k), \Lambda=
\mathrm{diag}(\lambda_1,\ldots,\lambda_k)$.\\
If $||T(\lambda_j)v_j||$ is small and $\lambda_j \in \mathrm{int}(\Gamma)$
accept $v_j=V_0^{[1]} s_j$ (with $V_0^{[1]}$ from \eqref{v0rep})
 as eigenvector and $\lambda_j$ as eigenvalue. 

\subsection{Numerical Examples} \label{sec5.2}
\begin{example} \label{example5}
We  apply the integral algorithm 2 to the rank deficient example
\eqref{excrit}, where $K=2,l=3$ and  the contour is the circle from 
\eqref{ex2}. Now the eigenvalues $a=-0.2$ and $b=1$ are reproduced
correctly (see Figure \ref{fig5}(left)), and three singular values survive as
expected (Figure \ref{fig5} (right)).

\begin{figure}[h] 
\centering
\psfrag{real}{$\re$}
	\psfrag{imag}{$\im$}
\psfrag{N}{$N$}
	\psfrag{sigma}{$\sigma_j$}
\includegraphics[width=0.48\textwidth,height=0.2\textheight]{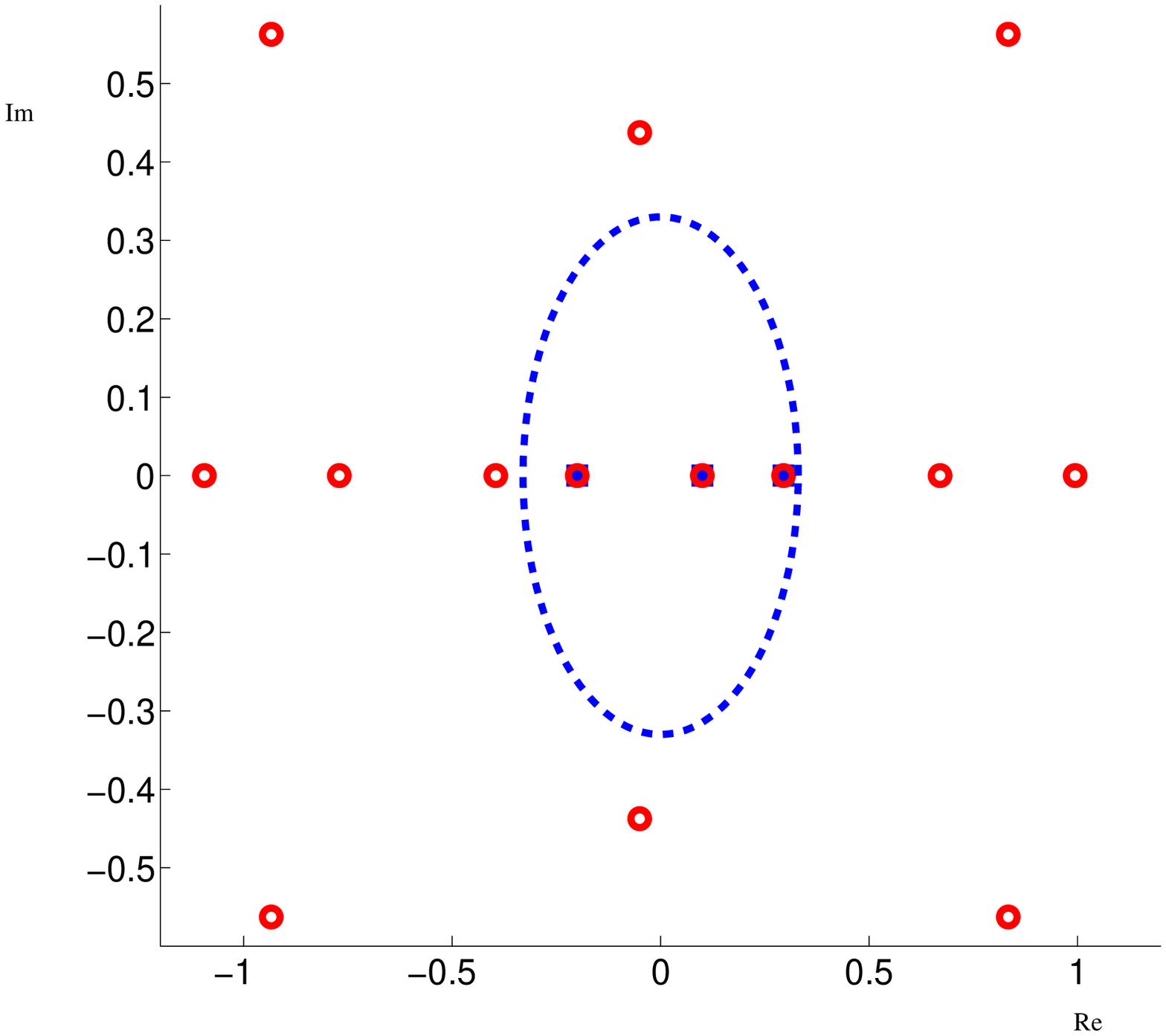} \quad
\includegraphics[width=0.48\textwidth]{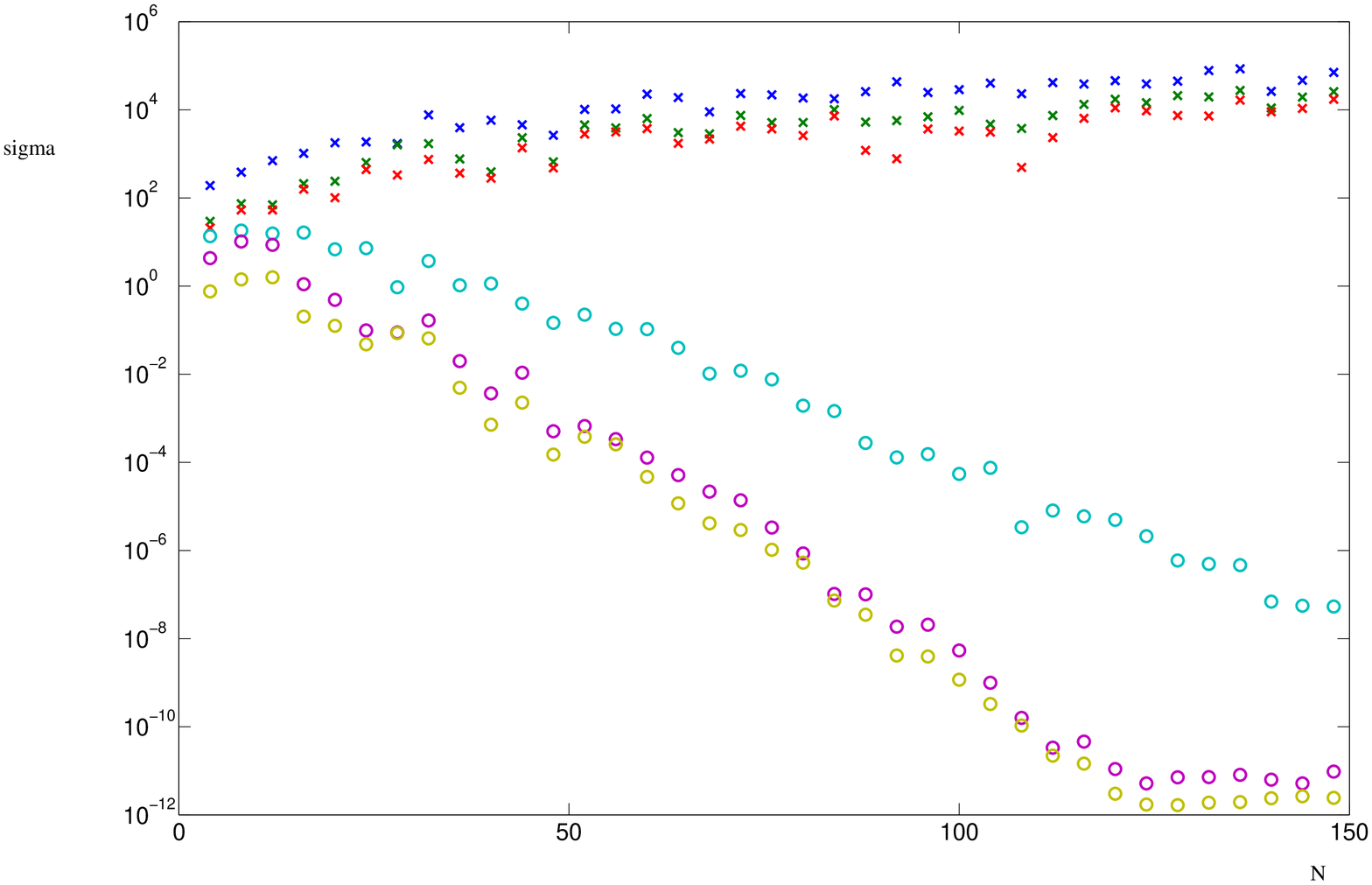}
\caption{\label{fig5} Example \ref{example5}. Eigenvalues from polyeig (open circles) and 
eigenvalues from
the integral algorithm 2 ($K=2$, filled boxes) for a quadratic matrix 
polynomial with rank defect
(left), singular values of integral algorithm $2$ with $l=3$ columns versus 
the number $N$ of quadrature nodes for
the same example (right). }
\end{figure}

\end{example}
\begin{figure}[h] 
\centering
\psfrag{real}{$\re$}
	\psfrag{imag}{$\im$}
\psfrag{N}{$N$}
	\psfrag{sigma}{$\sigma_j$}
\includegraphics[width=0.48\textwidth]{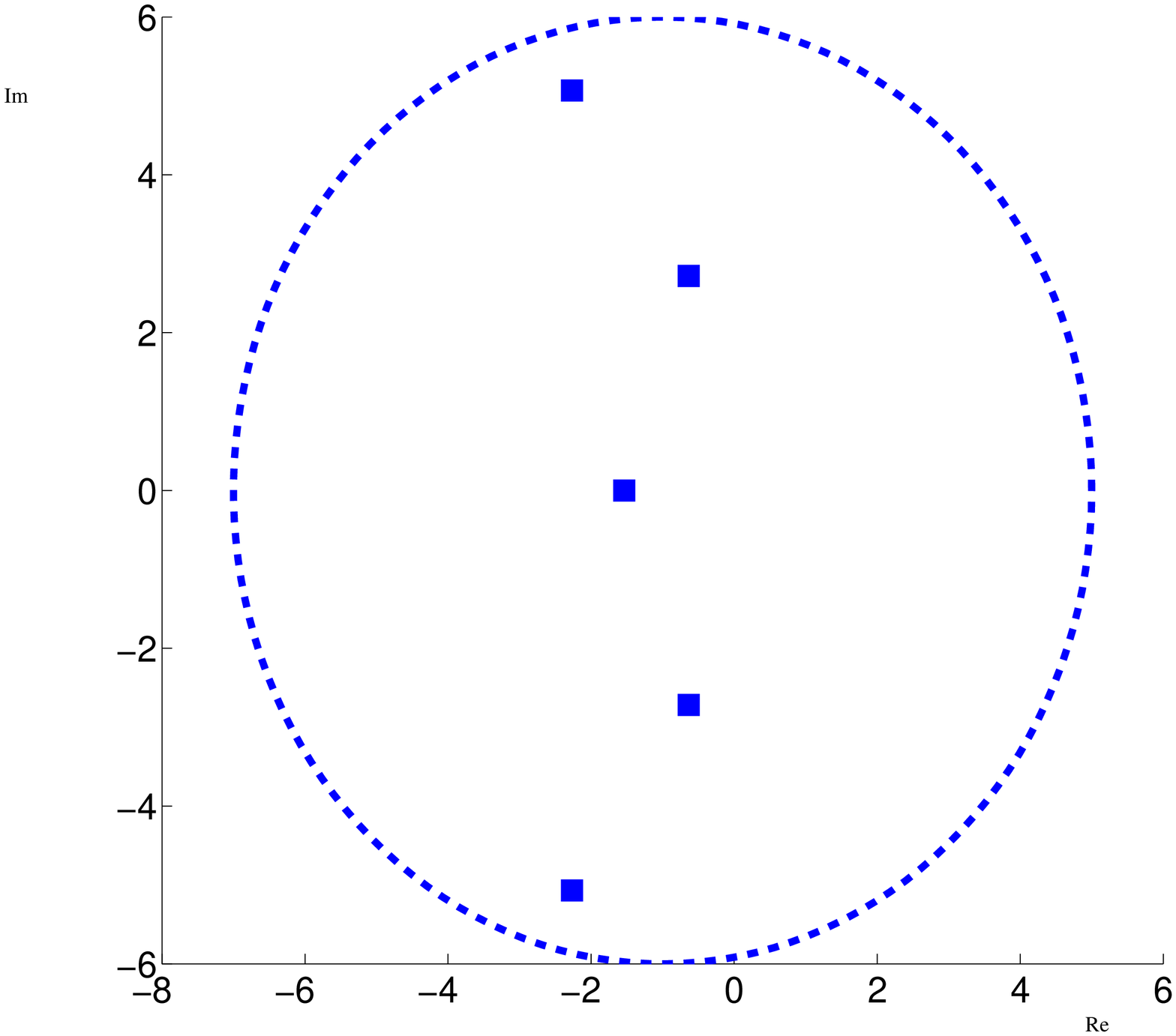} \quad
\includegraphics[width=0.48\textwidth]{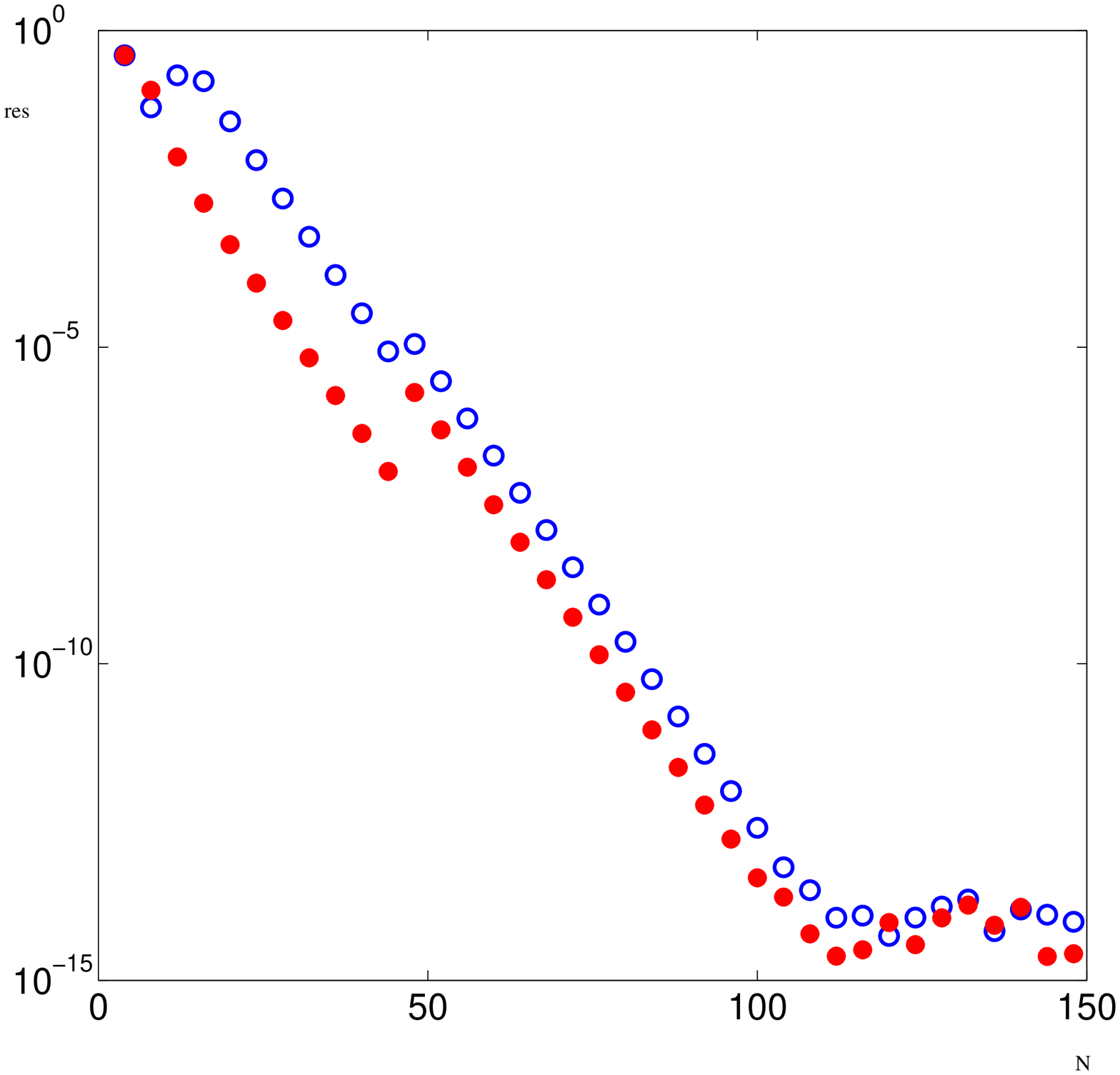}
\caption{\label{fig6} Example \ref{example6}.Eigenvalues of the characteric equation 
\eqref{delay} inside a circle of radius $6$ and with center $-1$, computed
with the integral algorithm 2 with $K=3,l=2$. 
(left), residuals $||T(\lambda_j)v_j||$ for $\lambda_1\approx -0.6+ 2.71 i$,
$\lambda_2 \approx -2.27 + 5.07 i$ versus   
the number $N$ of quadrature nodes for
the same example (right). }
\end{figure}

\begin{example} \label{example6}
Consider the characteristic equation of a delay system
$\dot{x}= T_0 x(t) +T_1 x(t-\tau)$ from \cite[Sec.2.4.2]{mn07},\cite{kr09},
given by
\begin{equation}\label{delay}
T(z)=z I - T_0 -T_1 e^{-z \tau}, \quad 
T_0=\begin{pmatrix} -5 & 1 \\ 2 & -6 \end{pmatrix},\quad
T_1=\begin{pmatrix} -2 & 1 \\ 4 & -1 \end{pmatrix}.
\end{equation}
\end{example}

In  case $\tau=1$ there are more than two eigenvalues
inside the circle $\varphi(t)= z_0 + R e^{it},\mu=-1,R=6$ .
We set $l=2,\hat{V}=I_2$ and $K=3$ for the integral algorithm 2 and
obtain with $N=150$ five eigenvalues inside the circle, 
(see Figure \ref{fig6}(left)),
which coincide with the computed ones in \cite{kr09}. Much smaller values
than $N=150$ give sufficient accuracy, since  there is a good separation
of singular values and a fast decay of residuals, see Figure \ref{fig6}(right).

\bibliographystyle{abbrv}
\bibliography{imev}

\def\cprime{$'$} \def\cprime{$'$} \def\cprime{$'$}
\begin{thebibliography}{10}

\bibitem{bhms08}
T.~Betcke, N.~J. Higham, V.~Mehrmann, C.~Schr{\"o}der, and F.~Tisseur.
\newblock {NLEVP}: {A} collection of nonlinear eigenvalue problems.
\newblock Technical Report 2008.40, MIMS, University of Manchester, Apr. 2008.
\newblock
  \texttt{www.mims.manchester.ac.uk/research/numerical-analysis/nlevp.html}.

\bibitem{bv04}
T.~Betcke and H.~Voss.
\newblock A {J}acobi-{D}avidson type projection method for nonlinear eigenvalue
  problems.
\newblock {\em Future Gener. Comput. Syst.}, 20:363--372, 2004.

\bibitem{D59}
P.~J. Davis.
\newblock On the numerical integration of periodic analytic functions.
\newblock In {\em On numerical approximation. {P}roceedings of a {S}ymposium,
  {M}adison, {A}pril 21-23, 1958}, Edited by R. E. Langer. Publication no. 1 of
  the Mathematics Research Center, U.S. Army, the University of Wisconsin,
  pages 45--59. The University of Wisconsin Press, Madison, 1959.

\bibitem{DR84}
P.~J. Davis and P.~Rabinowitz.
\newblock {\em Methods of numerical integration}.
\newblock Dover Publications Inc., Mineola, NY, 2007.
\newblock Corrected reprint of the second (1984) edition.

\bibitem{GS71}
I.~C. Gohberg and E.~I. Sigal.
\newblock An operator generalization of the logarithmic residue theorem and
  {R}ouch\'e's theorem.
\newblock {\em Mat. Sb. (N.S.)}, 84(126):607--629, 1971.

\bibitem{GK06}
R.~E. Greene and S.~G. Krantz.
\newblock {\em Function theory of one complex variable}, volume~40 of {\em
  Graduate Studies in Mathematics}.
\newblock American Mathematical Society, Providence, RI, third edition, 2006.

\bibitem{HHT08}
N.~Hale, N.~J. Higham, and L.~Trefethen.
\newblock Computing ${A}^{\alpha}$, $\log({A})$, and related matrix functions
  by contour integrals.
\newblock {\em SIAM J. Numer. Anal.}, 46:2505--2523, 2008.

\bibitem{h08}
N.~J. Higham.
\newblock {\em Functions of Matrices}.
\newblock SIAM, 2008.

\bibitem{Ke51}
M.~V. Keldysh.
\newblock On the characteristic values and characteristic functions of certain
  classes of non-self-adjoint equations.
\newblock {\em Doklady Akad. Nauk SSSR (N.S.)}, 77:11--14, 1951.

\bibitem{Ke71}
M.~V. Keldysh.
\newblock The completeness of eigenfunctions of certain classes of
  nonselfadjoint linear operators.
\newblock {\em Uspehi Mat. Nauk}, 26(4(160)):15--41, 1971.

\bibitem{kr09}
D.~Kressner.
\newblock A block {N}ewton method for nonlinear eigenvalue problems.
\newblock {\em Numer. Math.}, 114:355--372, 2009.

\bibitem{MS70}
A.~S. Markus and E.~I. Sigal.
\newblock The multiplicity of the characteristic number of an analytic operator
  function.
\newblock {\em Mat. Issled.}, 5(3(17)):129--147, 1970.

\bibitem{mv04}
V.~Mehrmann and H.~Voss.
\newblock Nonlinear eigenvalue problems: a challenge for modern eigenvalue
  methods.
\newblock {\em GAMM Mitteilungen}, 27, 2004.

\bibitem{MM84}
R.~Mennicken and M.~M{\"o}ller.
\newblock Root functions, eigenvectors, associated vectors and the inverse of a
  holomorphic operator function.
\newblock {\em Arch. Math. (Basel)}, 42(5):455--463, 1984.

\bibitem{MM03}
R.~Mennicken and M.~M{\"o}ller.
\newblock {\em Non-self-adjoint boundary eigenvalue problems}, volume 192 of
  {\em North-Holland Mathematics Studies}.
\newblock North-Holland Publishing Co., Amsterdam, 2003.

\bibitem{mn07}
W.~Michiels and S.-I. Niculescu.
\newblock {\em Stability and Stabilization of Time-delay Systems}, volume~12 of
  {\em Advances in Design and Control}.
\newblock Society for Industrial and Applied Mathematics (SIAM), 2007.

\bibitem{so06}
S.~Solovev.
\newblock Preconditioned iterative methods for a class of nonlinear eigenvalue
  problems.
\newblock {\em Linear Algebra Appl.}, 415:210--229, 2006.

\bibitem{stsu90}
G.~Stewart and J.~G. Sun.
\newblock {\em Matrix Perturbation Theory}.
\newblock Academic Press Inc., Boston, MA, 1990.

\bibitem{Tr68}
V.~P. Trofimov.
\newblock The root subspaces of operators that depend analytically on a
  parameter.
\newblock {\em Mat. Issled.}, 3(vyp. 3 (9)):117--125, 1968.

\bibitem{vo03}
H.~Voss.
\newblock A maxmin principle for nonlinear eigenvalue problems with application
  to a rational spectral problem in fluid-solid vibration.
\newblock {\em Appl. Math}, 48:607--622, 2003.

\bibitem{v04}
H.~Voss.
\newblock An {A}rnoldi method for nonlinear eigenvalue problems.
\newblock {\em BIT}, 44:387--401, 2004.

\bibitem{v07}
H.~Voss.
\newblock A {J}acobi-{D}avidson method for nonlinear and nonsymmetric
  eigenvalue problems.
\newblock {\em Comput. Struct.}, 85:1284--1292, 2007.

\bibitem{vw82}
H.~Voss and B.~Werner.
\newblock A minmax principle for nonlinear eigenvalue problems with
  applications to nonoverdamped systems.
\newblock {\em Math. Methods Appl. Sci.}, 4:415--424, 1982.

\end{thebibliography}
\end{document}